\documentclass[12pt,a4paper]{article}
\usepackage[english]{babel}

\usepackage{pstricks}
\usepackage{pst-node}
\usepackage{amsmath,boxedminipage}
\usepackage{amssymb}
\usepackage{graphicx}
\usepackage{enumerate}
\usepackage{color}
\usepackage{cite}
\usepackage[text={15cm,24cm}]{geometry}
\usepackage[normalem]{ulem}
\usepackage[ansinew]{inputenc}
\usepackage{ulem}

\usepackage{subcaption}
\usepackage{float}

\newenvironment{proof}{{\bf Proof}:\ }%
   {~\ \hfill $\Box$\vspace{0,5cm}}

    {~\ \hfill$\Box$\vspace{0,5cm}}
\newenvironment{ack}{\vskip5mm{\bf Acknowledgements:}}%

\newtheorem{theorem}{Theorem}[section]
\newtheorem{prop}[theorem]{Property}
\newtheorem{rmk}[theorem]{Remark}

\newtheorem{coro}[theorem]{Corollary}

\newtheorem{fact}[theorem]{Fact}

\graphicspath{{.}{graphics/}}
\newrgbcolor{lightlightlightgray}{0.9 0.9 0.9}

\numberwithin{equation}{section}

\begin{document}
\title{The complexity of the Perfect Matching-Cut problem}

\author{V. Bouquet\footnotemark[1],
C.\ Picouleau\footnotemark[1]}
\date{\today}

\def\thefootnote{\fnsymbol{footnote}}

\footnotetext[1]{ \noindent
Conservatoire National des Arts et M\'etiers, CEDRIC laboratory, Paris (France). Email: {\tt
valentin.bouquet@cnam.fr,christophe.picouleau@cnam.fr}}

\maketitle

\begin{abstract}
   Perfect Matching-Cut is the problem of deciding whether a graph has a perfect matching that contains an edge-cut. We show that this problem is NP-complete for planar graphs with maximum degree four, for planar graphs with girth five, for bipartite five-regular graphs, for  graphs of diameter three and for bipartite graphs of diameter four. We show that there exist polynomial time algorithms for the following  classes of graphs: claw-free, $P_5$-free, diameter two, bipartite with diameter three and graphs with bounded tree-width.

 \vspace{0.2cm}
\noindent{\textbf{Keywords}\/}: edge-cut, matching, perfect matching, planar graph, claw-free, tree-width, polynomial, NP-complete.
\end{abstract}
\section{Introduction}\label{intro}

The Matching-Cut problem consists of finding a matching that is an edge-cut. This problem has been extensively studied. Chv\'atal \cite{Chvatal} proved that the problem is NP-complete for graphs with maximum degree four and polynomially solvable for graphs with maximum degree three.
Bonsma \cite{Bonsma} showed that this problem remains NP-complete for planar graphs with maximum degree four and for planar graphs of girth five. Bonsma \cite{BonsmaFarley} et al. showed that the planar graphs with girth at least six have a matching-cut. They also gave  polynomial algorithms for some subclasses of graphs including claw-free graphs, cographs and graphs with fixed bounded tree-width or clique-width.
Le and Randerath \cite{LeRanderath} showed that the Matching-Cut problem is NP-complete for bipartite graphs in which the vertices of one side of the bipartition have degree three and the other vertices have degree four.

We address the Perfect Matching-Cut problem where the matching involved in the matching-cut is contained in a perfect matching.  Figure \ref{matchcut} shows on the left a graph having a perfect matching and a matching-cut but no perfect matching-cut (the matching-cut is outlined by the bold edges), on the right a graph having a perfect matching-cut.
To our knowledge, the only reference to this problem is from Diwan \cite{Diwan} which he called \textit{Disconnected $2$-Factors}. The graphs he studied are cubic, so finding a disconnected $2$-factor is the same as finding a perfect matching-cut. It is shown that all planar cubic bridgeless graphs have a disconnected $2$-factors except $K_4$.  Note that this is an enhancement, for the planar graphs, of the famous  Petersen's theorem that every cubic bridgeless graph has a perfect matching.

A variant of the Perfect Matching-Cut problem is studied in \cite{Heggernes} where it is shown that it is NP-complete to recognize a graph having a partition of the vertices into $(X,Y)$ such that the edges between $X$ and $Y$ forms a perfect matching. In Figure \ref{matchcut}, the graph on the right has a perfect matching-cut that is outlined by the bold edges, but it has no partition $(X,Y)$ of the vertices such that the edges between $X$ and $Y$ is a perfect matching.

\begin{figure}[htbp]
   \begin{center}
      \includegraphics[width=15cm, height=3cm, keepaspectratio=true]{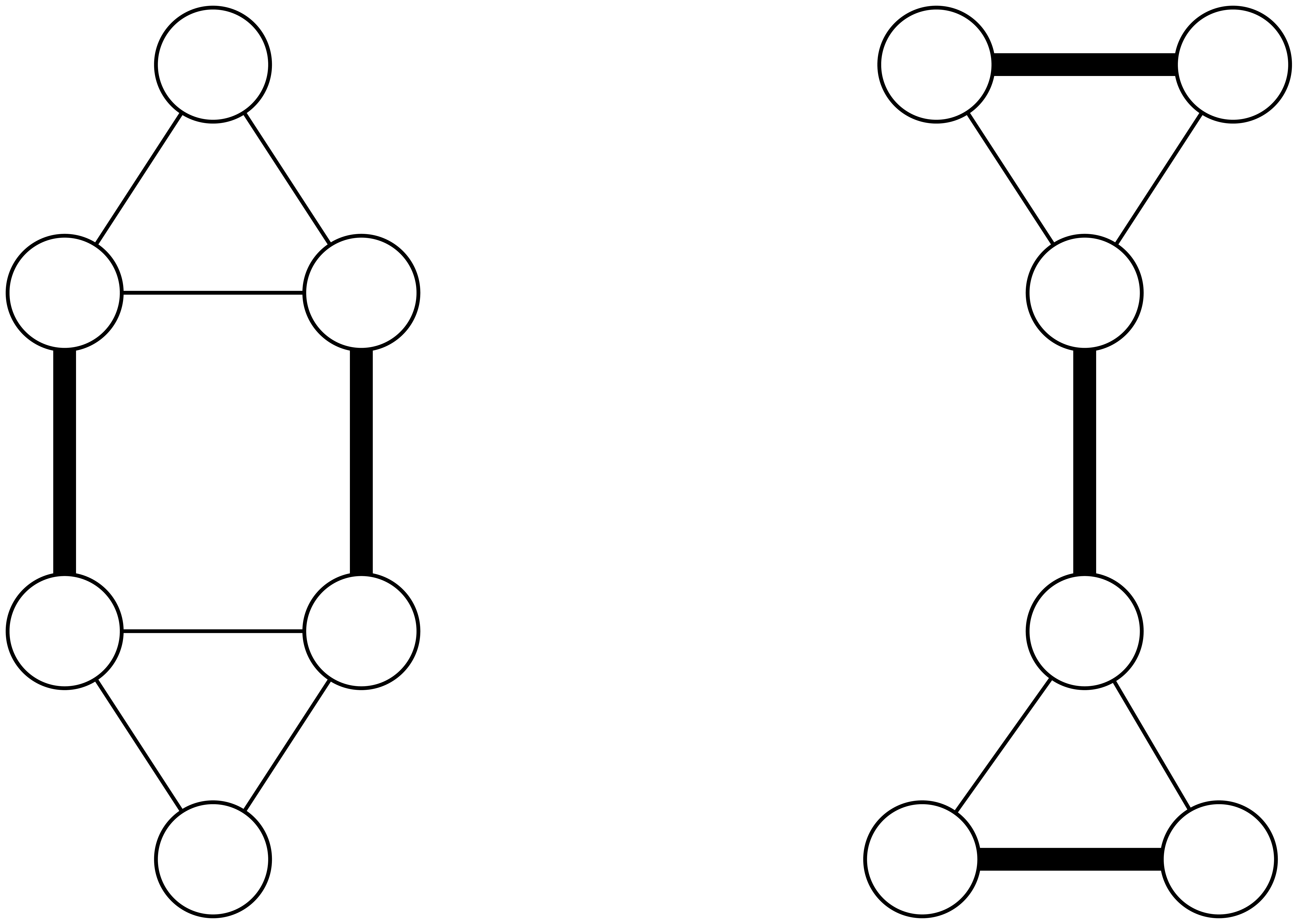}
   \end{center}
   \caption{On left a matching-cut, on right a perfect matching-cut.}
   \label{matchcut}
\end{figure}

In the same spirit of previous work on the Matching-Cut problem, we explore the complexity of the Perfect Matching-Cut problem. After some observations in Section \ref{pref}, we prove the following. In Section \ref{bipdiam}, the Perfect Matching-Cut problem is shown to be NP-complete for the $5$-regular bipartite graphs, for the graphs with  diameter three and for the bipartite graphs with diameter $d$, for any fixed $d\ge 4$,  and the problem is shown to be polynomial for the bipartite graphs with diameter three and for the graphs of diameter two. In Section \ref{planar}, the Perfect Matching-Cut problem is shown to be NP-complete for planar graphs with degrees three or four, and for planar graphs with girth five. In Sections \ref{P5free}, \ref{claw} and \ref{width}, the Perfect Matching-Cut problem is shown to be polynomial for the classes of $P_5$-free graphs (which includes cographs, split graphs, co-bipartite graphs), claw-free graphs and graphs with bounded tree-width. The section \ref{conc} is devoted to a conclusion and some open questions.

\section{Notations and preliminaries}\label{pref}
All graphs considered in this paper are finite, undirected, simple and loopless. For a graph $G=(V,E)$, let $\delta(G),\, \Delta(G),$ denote its minimum degree, its maximum degree, respectively.  The degree of a vertex $x$ is denoted by $d_G(x)$ or simply $d(x)$ when the context is unambiguous. A graph is {\it even} when $\vert V\vert$ is even, otherwise it is {\it odd}. A graph $G$ is $k$-regular whenever all its vertices have the same degree $d(x)=k$. For $v\in V$,  $N(v)$ is its neighborhood, $N[v]=N(v)\cup\{v\}$ its closed neighborhood.
For $X\subseteq V$, let $N(X)=\bigcup_{x\in X} N(x)$ and $N[X]=\bigcup_{x\in X} N[x]$. The length of a shortest path between two vertices $u,v$ of $G$ is $dist_G(u,v)$. Let $diam(G)$ denote the diameter of $G$, that is the maximum length of a shortest path (as usual when $G$ is not connected then $diam(G)=\infty$).  Let $g(G)$ denote the girth of $G$, that is the minimum length of a cycle. The graph $H$ is a subgraph of $G$, $H\subset G$, if its vertex set $V(H)\subset V$ and its edge set $E(H)\subset E$ is such that every $uv\in E(H)$ is an edge of $E$. For  $M\subseteq E$ we write $G-M$ the partial graph $G=(V,E\setminus M)$. A set $M\subseteq E$ is a {\it matching} when no edges of $M$ share a vertex. A matching $M$ is {\it perfect} when every vertex is incident to one edge of $M$.

Given $X\subseteq V$, the subgraph of $G$ induced by $X$, that is the graph with the vertex set $X$ and all the edges $xy\in E$ with $x\in X,y\in X$, is denoted by $G[X]$.  We write $G-v=G[V\setminus \{v\}]$ and for a subset $V'\subseteq V$ we write $G-V'=G[V\setminus V']$. The {\it contraction} of an edge $uv\in E$ removes the vertices $u$ and $v$ from $G$, and replaces them by a new vertex denoted $uv$ made adjacent to precisely those vertices that were adjacent to $u$ or $v$ in $G$ (neither introducing self-loops nor multiple edges). This operation is denoted $G/uv$.

The complete graph with $n$ vertices is $K_n$, also called the {\it clique} on $n$ vertices. The clique $C_3=K_3$ is a {\it triangle}. Let $K_{m,n}$ be the {\it bipartite clique} with $m$ vertices in one side of the partition and $n$ vertices in the opposite side. The {\it claw} is denoted $K_{1,3}$. Let $C_n$ be the chordless cycle on $n$ vertices. For a fixed graph $H$, the graph $G$ is {\it $H$-free} if $G$ has no induced subgraph isomorphic to $H$. For two vertex disjoint induced subgraphs $G[A],\, G[B]$ of $G$, $G[A]$ is {\it complete} to $G[B]$ if $ab\in E$ for any $a\in A,\, b\in B$. Other notations or definitions  of graphs that are not given here can be found in \cite{Bondy}.\\

A {\it cut} in $G$ is a partition $V=X\cup (V\setminus X)$ with $X,V\setminus X\ne \emptyset$. The set of all edges in $G$ having an endvertex in $X$ and the other endvertex in $V\setminus X$, also written $E(X,V\setminus X)$, is called the {\it edge-cut} of the cut. When the context is unambiguous we can use cut for edge-cut.
A \textit{bridge} is an edge-cut with exactly one edge. Note that when $G$ is connected every edge-cut $C=E(X,V\setminus X)$ is such that $G-C$ is disconnected; we say that $C$ disconnects $G$. A {\it matching-cut} is an edge-cut that is a matching. A {\it perfect matching-cut} is a perfect matching that contains a matching-cut. When a connected graph or subgraph cannot be disconnected by a matching it is called {\it immune}, when it cannot be disconnected by a perfect matching it is called {\it perfectly immune}.\\

The decision problem associated with the perfect matching-cut is defined as:
\begin{center}
\begin{boxedminipage}{.99\textwidth}
\textsc{\sc Perfect Matching-cut\ (PMC)} \\[2pt]
\begin{tabular}{ r p{0.8\textwidth}}
\textit{~~~~Instance:} &a connected graph $G=(V,E)$.\\
\textit{Question:} &is there $M\subset E$ such that $M$ is a perfect matching and $G-M$ is not connected ?
\end{tabular}
\end{boxedminipage}
\end{center}

Note that $PMC$  is clearly in NP.\\

Here we give some easy observations.
\begin{fact}\label{discclique}
$K_3$ and $K_{2,3}$ are immune.
\end{fact}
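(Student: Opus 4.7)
The plan is to handle each graph by bounding the size of any matching and then checking that removing such a matching cannot disconnect the graph.

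For $K_3$, observe that any two of its three edges share a vertex, so every matching has at most one edge. Removing a single edge from $K_3$ yields the path $P_3$, which is still connected; hence no matching disconnects $K_3$.

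For $K_{2,3}$, let the bipartition be $\{a_1,a_2\}\cup\{b_1,b_2,b_3\}$. Since one side has only two vertices, any matching $M$ has $|M|\le 2$. The interesting case is $|M|=2$; without loss of generality take $M=\{a_1b_1,a_2b_2\}$. After removal, $a_1$ is still adjacent to $b_2,b_3$ and $a_2$ is still adjacent to $b_1,b_3$, so $b_3$ is adjacent to both $a_1$ and $a_2$, giving a connected spanning subgraph. The case $|M|=1$ is easier and follows the same pattern.

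There is no real obstacle: the key observation is simply the crude bound on matching size (one edge for $K_3$, two for $K_{2,3}$), after which a one-line check suffices. The only mildly non-trivial point is noticing that in $K_{2,3}$ the unmatched vertex on the larger side always serves as a common neighbor that keeps the remaining graph connected.
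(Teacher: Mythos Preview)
Your argument is correct. The paper itself states this fact without proof, treating it as immediate; your write-up simply supplies the obvious details (bounding the matching size and observing that the unmatched vertex of the larger side of $K_{2,3}$ keeps the graph connected), so there is nothing to compare.
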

\begin{fact}\label{clique}
Assume that $G$ has a perfect matching-cut $M$ with $E(X,V\setminus X)\subset M$ and let $H$ be an immune subgraph of G. Then either $V(H)\subset X$ or $V(H)\subset V\setminus X$.
\end{fact}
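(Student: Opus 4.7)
The plan is a short proof by contradiction that exploits the fact that immune subgraphs are, by definition, connected and admit no matching-cut.

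Assume, for contradiction, that $V(H)$ meets both sides of the partition, i.e.\ $A:=V(H)\cap X\neq\emptyset$ and $B:=V(H)\cap(V\setminus X)\neq\emptyset$. First I would look at the set $F$ of edges of $H$ with one endpoint in $A$ and the other in $B$. Every such edge is an edge of $G$ with an endpoint in $X$ and the other in $V\setminus X$, so by hypothesis $F\subseteq E(X,V\setminus X)\subseteq M$. Since $M$ is a matching, $F$ is a matching of $H$.

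Next I would argue that $F$ is an edge-cut of $H$. Because $H$ is connected (immunity is only defined for connected subgraphs) and both $A$ and $B$ are nonempty, removing all $A$–$B$ edges of $H$ disconnects $A$ from $B$ in $H$; equivalently $F=E_H(A,B)$ is an edge-cut that separates $A$ from $B$.

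Combining the two points, $F$ is a matching-cut of $H$, contradicting the assumption that $H$ is immune. Hence one of $A$, $B$ must be empty, giving $V(H)\subseteq X$ or $V(H)\subseteq V\setminus X$. The only subtle point is to notice that the crossing edges of $H$ automatically lie in $M$ (because the whole cut $E(X,V\setminus X)$ does), which is what forces them to be a matching; after that the argument is immediate, so I do not expect any real obstacle.
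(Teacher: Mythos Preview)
Your proof is correct and is exactly the natural argument; in fact the paper states this fact as an ``easy observation'' without giving any proof, so there is nothing to compare against beyond noting that your argument is the intended one.
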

\begin{fact}\label{2neighX}
Assume that $G$ has a perfect matching-cut $M$ with $E(X,V\setminus X)\subset M$.
 If $v$ has two neighbors in $X$ then $v\in X$.
\end{fact}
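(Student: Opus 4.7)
The statement is almost immediate from the definitions, so my plan is a short proof by contradiction rather than anything structural. Suppose, for a contradiction, that $v\notin X$, i.e.\ $v\in V\setminus X$. Let $u_1,u_2\in X$ be the two neighbors of $v$ in $X$ guaranteed by hypothesis.

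Then the edges $vu_1$ and $vu_2$ both have one endpoint in $V\setminus X$ (namely $v$) and one endpoint in $X$ (namely $u_1$, resp.\ $u_2$). By definition of the edge-cut, $vu_1,vu_2\in E(X,V\setminus X)$. Since $E(X,V\setminus X)\subset M$ by hypothesis, both edges lie in $M$.

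But $M$ is a matching, so no two of its edges share an endpoint; however $vu_1$ and $vu_2$ both contain $v$, a contradiction. Hence $v\in X$, as claimed. The only thing to be a bit careful about is the direction of the hypothesis — the fact uses the containment $E(X,V\setminus X)\subset M$, not equality, which is exactly what makes the argument work, since we only need that the two specific edges $vu_1,vu_2$ belong to the matching $M$. No obstacle is expected; this is purely a sanity check on the notion of perfect matching-cut, and will presumably be used repeatedly in later proofs to propagate the side of the partition along vertices with multiple neighbors on one side.
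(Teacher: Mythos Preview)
Your proof is correct and essentially the same as the paper's. The paper phrases it directly rather than by contradiction: since $M$ is a perfect matching, exactly one edge at $v$ lies in $M$, so in $G-M$ the vertex $v$ still has a neighbor in $X$; as $E(X,V\setminus X)\subset M$, no edge of $G-M$ crosses the cut, forcing $v\in X$. This is just the contrapositive of your argument, and the underlying idea is identical.
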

\begin{proof}
Let $M$ be a perfect matching. There is a unique vertex $u$ such that $uv\in M$. So in $G-M$, $v$ has a neighbor in $X$. Hence $v\in X$.
\end{proof}

\begin{fact}\label{bridge}
If $M$ is a perfect matching that contains a bridge then $M$ is a perfect matching-cut.
\end{fact}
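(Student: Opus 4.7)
The plan is to unpack the definitions and observe that the bridge itself already provides the required matching-cut inside $M$.

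More concretely, let $e=uv\in M$ be a bridge of $G$. By the definition recalled just before the statement, a bridge is an edge-cut consisting of exactly one edge, so $\{e\}=E(X,V\setminus X)$ for some proper nonempty $X\subset V$. A single edge is trivially a matching (there are no two edges to share a vertex), hence $\{e\}$ is a matching-cut of $G$. Since $e\in M$, we have $\{e\}\subseteq M$, so $M$ is a perfect matching that contains a matching-cut; this is exactly the definition of a perfect matching-cut.

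There is no real obstacle here: the only subtlety is making sure one reads the definition of perfect matching-cut as ``a perfect matching containing \emph{some} matching-cut'' (which the text explicitly states), rather than requiring $M$ itself to be the edge-cut. Once that is noted, the fact is immediate from the bridge providing a one-edge matching-cut lying inside $M$.
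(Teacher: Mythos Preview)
Your argument is correct and is exactly the intended one: the paper states this fact without proof, treating it as immediate from the definitions, and your unpacking of those definitions is precisely why it is immediate.
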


\begin{fact}\label{leaf}
When a graph $G$ with $\delta(G)=1$ has a perfect matching then $G$ has a perfect matching-cut.
\end{fact}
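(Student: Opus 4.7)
The plan is very short because the statement essentially reduces to the previous Fact \ref{bridge}. The key observation is that a pendant edge is automatically a bridge, and in any perfect matching it is forced to appear.

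First I would pick a vertex $v$ with $d(v)=1$, which exists because $\delta(G)=1$, and let $u$ be its unique neighbor. Any perfect matching $M$ of $G$ must saturate $v$, and since $uv$ is the only edge incident to $v$, we must have $uv\in M$.

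Second, I would note that $uv$ is a bridge: in $G-uv$ the vertex $v$ becomes isolated, so the graph is disconnected (here I am implicitly using that $G$ has a perfect matching and hence at least two vertices, so isolating $v$ really does split the vertex set). Therefore $M$ contains a bridge, and by Fact \ref{bridge}, $M$ is a perfect matching-cut.

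There is essentially no obstacle; the only thing one has to be mildly careful about is the trivial case $G=K_2$, where the perfect matching-cut is the whole edge set, but this is already covered by the general argument since $\{uv\}$ is then the cut $E(\{u\},\{v\})$. The proof is really a one-liner: every perfect matching of such a $G$ must use the pendant edge, that edge is a bridge, and Fact \ref{bridge} concludes.
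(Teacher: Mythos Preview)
Your proposal is correct. The paper does not actually spell out a proof of Fact~\ref{leaf}; it is listed among the ``easy observations'' and left to the reader. Your argument---the pendant edge is forced into any perfect matching and is a bridge, so Fact~\ref{bridge} applies---is precisely the intended one-line justification.
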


\section{Bipartite graphs and bounded diameter}\label{bipdiam}
We give our results for bipartite and regular graphs, and  for graphs with fixed diameter. First we show NP-complete results, second polynomial problems.
\subsection{NP-complete classes}
\begin{theorem}\label{NPcbip5reg}
 Deciding if a connected bipartite $5$-regular graph has a perfect matching-cut is NP-complete.\end{theorem}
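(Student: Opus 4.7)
The plan is to establish NP-hardness by polynomial reduction from a restricted version of \textsc{Not-All-Equal 3-SAT} (say the variant in which each variable occurs a bounded number of times and each clause contains three distinct literals, which remains NP-complete). Membership in NP has already been noted: given $M\subset E$ one checks in polynomial time that $M$ is a perfect matching and that $G-M$ is disconnected.

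Given a NAE-3-SAT formula $\Phi$ with variables $x_1,\dots,x_n$ and clauses $C_1,\dots,C_m$, I would build a bipartite 5-regular graph $G_\Phi$ out of three types of gadgets. A \emph{variable gadget} for each $x_i$ contains two distinguished ``terminal'' groups $T_i$ (true) and $F_i$ (false); its internal structure is a bipartite, perfectly immune block equipped with the only possible perfect matchings extending a bipartition in which $T_i\subseteq X$ and $F_i\subseteq V\setminus X$ (or the reverse), one copy of each terminal being provided per occurrence of the variable. A \emph{clause gadget} for each $C_j=\ell_{j,1}\vee\ell_{j,2}\vee\ell_{j,3}$ is attached to the three corresponding terminal vertices; it is designed to be perfectly immune so that, by Fact \ref{clique}, all its vertices lie on the same side of the cut, and so that a perfect matching of its interior compatible with the cut exists if and only if the three attached terminals are not all placed on the same side -- this is the NAE condition. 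Finally, \emph{padding gadgets} built from small bipartite 5-regular fragments (e.g.\ obtained from $K_{5,5}$ minus a perfect matching, or suitable blow-ups) are glued to every vertex whose degree is still below $5$, in such a way that bipartiteness is preserved, the padding is itself perfectly immune, and the padding does not touch the cut.

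Correctness then unfolds in both directions. A NAE-satisfying assignment of $\Phi$ induces a cut $(X,V\setminus X)$ by placing each $T_i$ in $X$ and each $F_i$ in $V\setminus X$ (and all clause and padding gadgets on whichever side the NAE assignment leaves available); the gadgets were engineered so that this extends to a perfect matching whose only cross-edges are the terminal-to-clause edges, and that perfect matching is a perfect matching-cut. Conversely, from any perfect matching-cut $M=E(X,V\setminus X)$ in $G_\Phi$, Fact \ref{clique} applied to the (perfectly immune) clause and padding gadgets, together with Fact \ref{2neighX} applied at the terminals, forces a consistent variable assignment which must satisfy the NAE constraint at every clause.

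The hard part will be the gadget design, not the bookkeeping: the gadgets must simultaneously be bipartite, strictly 5-regular at every internal vertex, perfectly immune, and support exactly the intended ``boundary behavior.'' The classical immune examples $K_3$ and $K_{2,3}$ from Fact \ref{discclique} are of little direct use here ($K_3$ is non-bipartite and both are far from 5-regular), so the main technical task is to exhibit small bipartite 5-regular building blocks that are perfectly immune and whose perfect matchings can be prescribed on a designated boundary. Once such blocks are available, combining them into the variable, clause and padding gadgets and verifying 5-regularity by a degree count is a routine, though careful, exercise.
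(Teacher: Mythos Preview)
Your proposal is not a proof but a proof \emph{plan}: you postulate variable, clause and padding gadgets with a long list of required properties (bipartite, exactly $5$-regular, perfectly immune, with precisely controlled perfect matchings on a prescribed boundary) and then argue correctness \emph{assuming} such gadgets exist. But the construction of those gadgets is the entire technical content of the theorem; you explicitly defer it (``the main technical task is to exhibit small bipartite $5$-regular building blocks\ldots''). Nothing in the proposal shows that the simultaneous constraints are realisable, and some of them are in tension. For instance, you ask the variable gadget to be a ``perfectly immune block'' yet also to admit cuts separating $T_i$ from $F_i$; and for the clause gadget you need the existence of an internal perfect matching to depend on which subset of the three boundary edges crosses the cut, which is a delicate parity/structure condition that does not follow from anything you have written. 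Until at least one concrete gadget is exhibited and verified, the argument has a genuine gap.

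For comparison, the paper avoids all of this by reducing from a problem that is already almost the target: \textsc{Matching-Cut} on bipartite graphs in which one side has all degrees~$3$ and the other all degrees~$4$ (Le--Randerath). Two copies $G_1,G_2$ of such a $G$ are taken; copies of degree-$4$ vertices are joined by a single edge, and copies of degree-$3$ vertices are joined through a fixed small bipartite immune gadget that contributes the missing two edges at each end. This immediately yields a bipartite $5$-regular $G'$, and the correspondence ``$G$ has a matching-cut $\Leftrightarrow$ $G'$ has a perfect matching-cut'' follows because the linking gadget is immune and supplies, in either case (covered/uncovered by the original matching-cut), the edges needed to complete a perfect matching. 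The moral is that starting from \textsc{Matching-Cut} rather than \textsc{NAE-3-SAT} makes the degree and bipartiteness bookkeeping trivial and reduces the gadget design to a single easily checked component, whereas your route would require inventing and verifying several interacting gadgets from scratch.
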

\begin{proof}
By Le and Randerath \cite{LeRanderath} we know the following: deciding if a bipartite graph $G=(V=S\cup T,E)$, such that all the vertices of $T$ have degree three and all the vertices of $S$ have degree four, has a matching-cut is NP-complete.

From $G$ we build (in polynomial time) a connected  graph $G'=(V',E')$ as follows. We take two copies $G_1=(V_1=S_1\cup T_1,E_1)$, $G_2=(V_2=S_2\cup T_2,E_2)$ of $G$. For each $v\in T$, let $v_1\in T_1$, $v_2\in T_2$, be its two copies in $G_1$, $G_2$, respectively. We link $v_1$ to $v_2$ with the graph depicted in the figure \ref{bip5reg}.
For each $v\in S$, let $v_1\in S_1$, $v_2\in S_2$ be its two copies in $G_1,G_2$, respectively. We add the edge $v_1v_2$. Clearly $G'$ is bipartite and  $5$-regular. One can observe that the graph depicted in the figure \ref{bip5reg} is immune.

From $M\subset E$ a matching-cut of $G$ corresponds $M'\subset E'$ as follows. For each $uv\in M,\, u\in S,\,v\in T$, we take $u_1v_1,u_2v_2$  and the edges  represented by the bold lines on the left of the figure \ref{bip5reg}. For each vertex $w$ which is not covered by $M$: when $w\in T$ we take the  edges represented by the bold lines on the right of the figure \ref{bip5reg}; when $w\in S$ we take the  edge $w_1w_2$. Hence $M'$ is a perfect matching of $G'$.
There are two vertices $u,v$ in $G-M$ with no path between them, so there is no path linking $u_i$ with $v_j$, $i,j\in\{1,2\}$ in $G'-M'$. So $M'$ is a perfect matching-cut of $G'$.

Reciprocally, let $M'$ be a perfect matching-cut of $G'$.  The graph shown by the figure \ref{bip5reg} cannot be disconnected by $M'$. Moreover the edges $v_1v_2,\,v\in S,$ cannot disconnect $G'$. So we have that $G'[V_i],i\in\{1,2\}$ is disconnected. Let $M_1=M'\cap E_1$, for instance. To $M_1$ corresponds a matching-cut of $G$.
\end{proof}

\begin{figure}[htbp]
\begin{center}
\includegraphics[width=15cm, height=5cm, keepaspectratio=true]{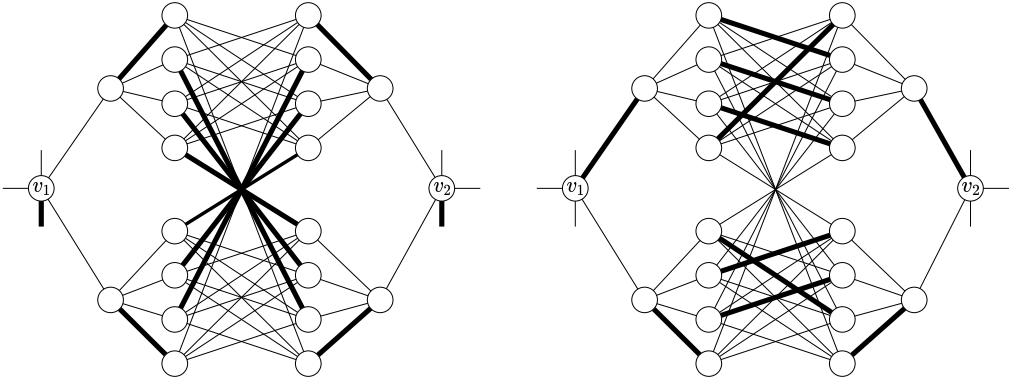}
\end{center}
\caption{On the left $v$ is covered by $M$, on the right $v$ is not covered by $M$.}
\label{bip5reg}
\end{figure}

\begin{theorem}
   Deciding if a graph of diameter $3$ has a perfect matching-cut is NP-complete.
\end{theorem}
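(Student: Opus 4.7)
The plan is to reduce from PMC on the bipartite $5$-regular class established in Theorem~\ref{NPcbip5reg}. Membership in NP is routine: one guesses a set $M\subseteq E$ and verifies in polynomial time that $M$ is a perfect matching and that $G-M$ is disconnected. For the hardness direction, given an instance $G=(S\cup T,E)$ of PMC on bipartite $5$-regular graphs, I would construct in polynomial time a graph $G'$ with $diam(G')\le 3$ such that $G'$ admits a perfect matching-cut iff $G$ does.

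The construction attaches to $G$ two ``hub'' gadgets $H_S$ and $H_T$, joined to each other by a single edge or by a small bridging immune gadget. Each vertex of $S$ is attached to $H_S$ via a controlled pattern of edges (for instance a single edge to a prescribed hub vertex), and symmetrically each vertex of $T$ is attached to $H_T$. The hubs are built from immune pieces such as $K_{2,3}$ or $K_3$ (Fact~\ref{discclique}), so by Fact~\ref{clique} each hub must lie entirely on one side of any perfect matching-cut of $G'$. Distances then fall into place: two vertices of $S$ meet through $H_S$ in at most $2$ steps, two vertices of $T$ through $H_T$ in at most $2$, and any $s\in S$, $t\in T$ meet through the bridge $H_S$--$H_T$ in at most $3$.

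Proving equivalence splits into two directions. For the forward direction, given a PMC $M$ of $G$ with cut $E(X,V\setminus X)\subseteq M$, I extend $M$ to a perfect matching $M'$ of $G'$ by matching the hub vertices internally (which is possible because the hubs are designed to admit a fixed perfect matching) and by handling the bridge edge appropriately; one then checks that no new $X$-to-$(V\setminus X)$ path is created in $G'-M'$ because each hub is confined to a single side and the attachments to $V(G)$ are kept sparse. For the reverse direction, a PMC $M'$ of $G'$ with cut $E(X',V'\setminus X')$ has $H_S$ and $H_T$ each entirely on one side by Fact~\ref{clique}; reading off the trace $M'\cap E$ on the original graph, and verifying that both $X'\cap V$ and $(V'\setminus X')\cap V$ are non-empty, produces a PMC of $G$.

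The main obstacle is the tension imposed by Fact~\ref{2neighX}: to shrink the diameter to $3$ I want each $v\in V(G)$ close to the hub on its side, but as soon as $v$ has two neighbors in a side-fixed hub it is forced to that side, and sufficiently dense attachments would collapse all of $V(G)$ onto one side and kill every putative PMC. Calibrating the hubs so that they are dense enough internally to route all within-part pairs at distance $\le 2$, but sparse enough toward $V(G)$ to leave an honest bipartition of $V$ possible on the two sides of the cut, is the delicate design step that the whole reduction turns on; one must also avoid accidentally creating a bridge in $G'$, since by Fact~\ref{bridge} that would trivialise the instance.
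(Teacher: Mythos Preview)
Your proposal is not a proof: you explicitly identify ``the delicate design step that the whole reduction turns on'' and then leave it unresolved. Worse, the hub idea as sketched cannot be made to work. Consider the two natural readings of ``a single edge to a prescribed hub vertex'':
\begin{itemize}
\item If all $s\in S$ share one hub neighbour $h$, then the diameter bound holds, but in the forward direction any perfect matching-cut $M$ of $G$ has both $X\cap S\neq\emptyset$ and $(V\setminus X)\cap S\neq\emptyset$ (a vertex of $T$ lying entirely on one side would have all five neighbours across the cut). Since every $s\in S$ is already $M$-saturated inside $G$, none of the edges $sh$ can enter $M'$, so in $G'-M'$ the vertex $h$ is adjacent to vertices on both sides and bridges the cut. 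No extension of $M$ to $G'$ is a perfect matching-cut.
\item If each $s\in S$ gets its own hub neighbour $h_s$ in a clique $H_S$, then $dist(s,s')=3$ via $s\!-\!h_s\!-\!h_{s'}\!-\!s'$, and any $s\in S$, $t\in T$ are at distance at least $4$ through the single bridge edge; even making $H_S$ complete to $H_T$ only gets $dist(s,t)\le 3$, but then $H_S\cup H_T$ is immune and must sit on one side, and the same saturation argument as above forces every edge $s'h_{s'}$ with $s'$ on the other side into $M'$, contradicting that $s'$ is already matched in $M$.
\end{itemize}
This is precisely the collapse you warn about via Fact~\ref{2neighX}, and you give no mechanism to escape it. Reducing from PMC rather than from Matching-Cut is what traps you: every vertex of $G$ is already saturated, so the attachment edges can never absorb the crossing.

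The paper takes a different route that sidesteps this tension entirely. It reduces from \emph{Matching-Cut} (not PMC) on arbitrary graphs, takes two copies $G_1,G_2$ of $G$, and attaches to each pair $v_i^1,v_i^2$ a large clique $K_{v_i^1,v_i^2}$ of size $2n$, with sparse matching-like edges between cliques of different vertices to force diameter $3$. These per-vertex cliques are immune, so any perfect matching-cut of $G'$ must disconnect each copy $G_j$, yielding a matching-cut of $G$; conversely, because the original cut is only a matching-cut, the unmatched vertices of $G$ can be saturated using the abundant clique vertices, which is exactly the flexibility your hub construction lacks.
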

\begin{proof}
By Le and Le \cite{lele}, we know that deciding if a graph with diameter three has a matching-cut is NP-complete. Their proof inspired ours.

From a graph $G=(V,E)$, we build (in polynomial time) a connected graph $G'=(V',E')$ as follows.
We take two copies $G_1=(V_1,E_1)$, $G_2=(V_2,E_2)$ of $G$. We denote  $V=\{v_1,v_2,\ldots,v_n\},\, V_1=\{v_1^1,v_2^1,\ldots,v_n^1\},\, V_2=\{v_1^2,v_2^2,\ldots,v_n^2\}$. We describe how to build $G_1$, the construction for $G_2$ is the same. For each $1\le i\le n$, let $K_{v_i^1}$ be a complete graph of $n$ vertices. We add edges between $v_i^1$ and $K_{v_i^1}$, so that $K_{v_i^1}\cup \{v_i^1\}$ is a clique of size $n+1$. We say that $K_{v_i^1}$ is the associated clique of $v_i^1$. Let $K^1=\bigcup_{i=1,\ldots,n} V(K_{v_i^1})$ be the set of vertices of the associated cliques of $V_1$. For each pair of cliques $K_{v_i^1},\, K_{v_j^1},\, i<j$, we add exactly one edge $q_iq_j$, $q_i\in K_{v_i^1}$, $q_j\in K_{v_j^1}$, in such a way that $q_i,q_j$ has exactly one neighbor in $K^1\setminus V(K_{v_i^1})$, $K^1\setminus V(K_{v_j^1})$, respectively. Hence for each clique $K_{v_i^1}$, every vertex of $K_{v_i^1}$ --- except one that has no neighbor in $K^1\setminus V(K_{v_i^1})$ --- has exactly one neighbor in $K^1\setminus V(K_{v_i^1})$. Then, for each pair $v_i^1\in V_1,\, v_i^2\in V_2$ we make $K_{v_i^1}$ complete to $K_{v_i^2}$, $v_i^1$ complete to $K_{v_i^2}$ and $v_i^2$ complete to $K_{v_i^1}$.  Hence $K_{v_i^1,v_i^2}=K_{v_i^1}\cup K_{v_i^2}$ is a clique with $2n$ vertices. Also, note that $K_{v_i^1,v_i^2}\cup \{v_i^1\}$ and $K_{v_i^1,v_i^2}\cup \{v_i^2\}$ are two cliques with $2n+1$ vertices, and therefore $K_{v_i^1,v_i^2}\cup \{v_i^1,v_i^2\}$ is immune. At this point $G'$ is constructed. \\

We show that $diam(G')=3$. First, note that for every pair $q\in K_{v_i^1}$, $q'\in K_{v_j^1}$, we have $dist_{G'}(q,q')\leq 3$, and for every pair $v\in V_1\cup V_2,\, q\in K_u$, with $u\in V_1\cup V_2$, we have $dist_{G'}(v,q)\leq 3$.
Let $v_i^1,v_j^1,\, i\neq j,$ be two non adjacent vertices of $G'$. From our construction, there exists $qq'\in E(G')$ where $q\in K_{v_j^1}$  and $q'\in K_{v_i^1}$. Recall that $v_i^1$ and $v_j^1$ are both complete to their associated clique. Hence $v_i^1-q-q'-v_j^1$ is a path and therefore $dist_{G'}(v_i^1,v_j^1)\leq 3$. The same holds for two distinct vertices of $V_2$. Now let a pair $v_i^1,v_j^2$. If $i=j$ then there exists a path $v_i^1-q-v_i^2$, where $q\in K_{v_i^1,v_i^2}$. Now let $i\neq j$. There exists $qq'\in E(G')$ where $q$ is a vertex of $K_{v_i^1}$ and $q'$ is a vertex of $K_{v_j^2}$. Hence $v_i^1-q-q'-v_j^2$ is a path and therefore $dist_{G'}(v_i^1,v_j^2)\leq 3$. So diam$(G')=3$. \\

From $M\subset E$ a matching-cut of $G$ corresponds $M'\subset E'$ as follows. Let $(X,Y)$ be the partition of $G$ induced by the matching-cut $M$. For each pair $x\in X$, $y\in Y$ let $x^1,y^1\in V_1$, and $x^2,y^2\in V^2$ be their corresponding vertices in $G'$. Then we take $q_{x^1}q_{y^1}$ and $q_{x^2}q_{y^2}$ in $M'$, where $q_{x^1}\in K_{x^1}$, $q_{x^2}\in K_{x^2}$, $q_{y^1}\in K_{y^1}$ and $q_{y^2}\in K_{y2}$. We show that $G'[K^1\cup K^2]-M'$ is disconnected. Let $K^X$ be the set of cliques associated with a vertex $v\in X$. The set $K^Y$ is defined similarly for $Y$. For each pair $K_u\in K^X,K_v\in K^Y$, there is exactly one edge $qq'$ between $K_u$ and $K_v$, and therefore $qq'\in M'$. Hence $G'[K^1\cup K^2]-M'$ is disconnected and $M'$ is a matching.
For each $v_iv_j\in M$ we take $v_i^1v_j^1,v_i^2v_j^2\in M'$. Hence $G_1-M'$ and $G_2-M'$ are both disconnected. Recall that $G'[K^1\cup K^2]-M'$ is also disconnected, and thus $G'-M'$ is disconnected.
So $M'$ is a matching-cut of $G'$. We show how to take the remaining edges so that $M'$ is a perfect matching-cut. Let the vertices that are not covered by $M$. We chose two uncovered vertices $q_1,q_2$ in $K_{v_i^1,v_j^2}$ and  take $v_i^1q_1,v_j^2q_2$ in $M'$. Note that it remains an even number of vertices in $K_{v_i^1,v_j2}$ that are not covered by $M'$, so we can take any matching covering these edges in $M'$. Clearly, $M'$ is a perfect matching-cut.

Reciprocally, let $M'$ be a perfect matching-cut of $G'$. Recall that for each pair $v_i^1,v_i^2$, the clique $K_{v_i^1,v_i^2}\cup \{v_i^1,v_i^2\}$ is immune. So we have that $G'[V_i]$, $i\in\{1,2\}$ is disconnected by $M'$. Let $M_1=M'\cap E_1$, for instance. To $M_1$ corresponds a matching-cut of $G$.
\end{proof}

\begin{figure}[htbp]
   \begin{center}
   \includegraphics[width=15cm, height=5cm, keepaspectratio=true]{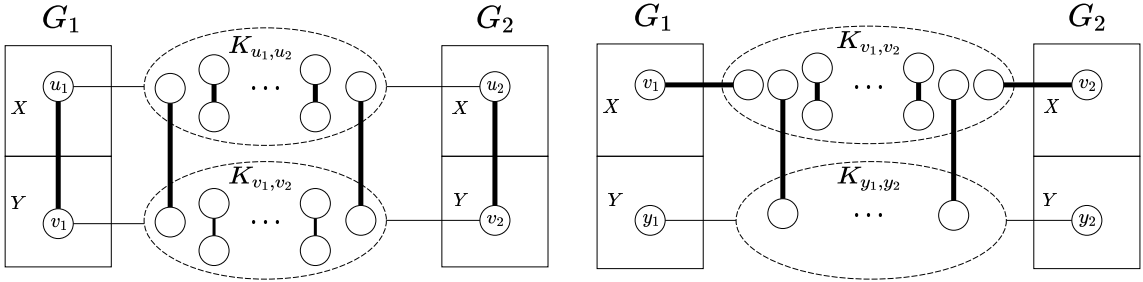}
   \end{center}
   \caption{On the left $u_1,v_1$ are covered by $M$, on the right $v_1,v_2$ are not covered by $M$.}
   \label{3diam}
\end{figure}

% TODO Figure changé
\begin{figure}[htbp]
   \begin{center}
   \includegraphics[width=8cm, height=5cm, keepaspectratio=true]{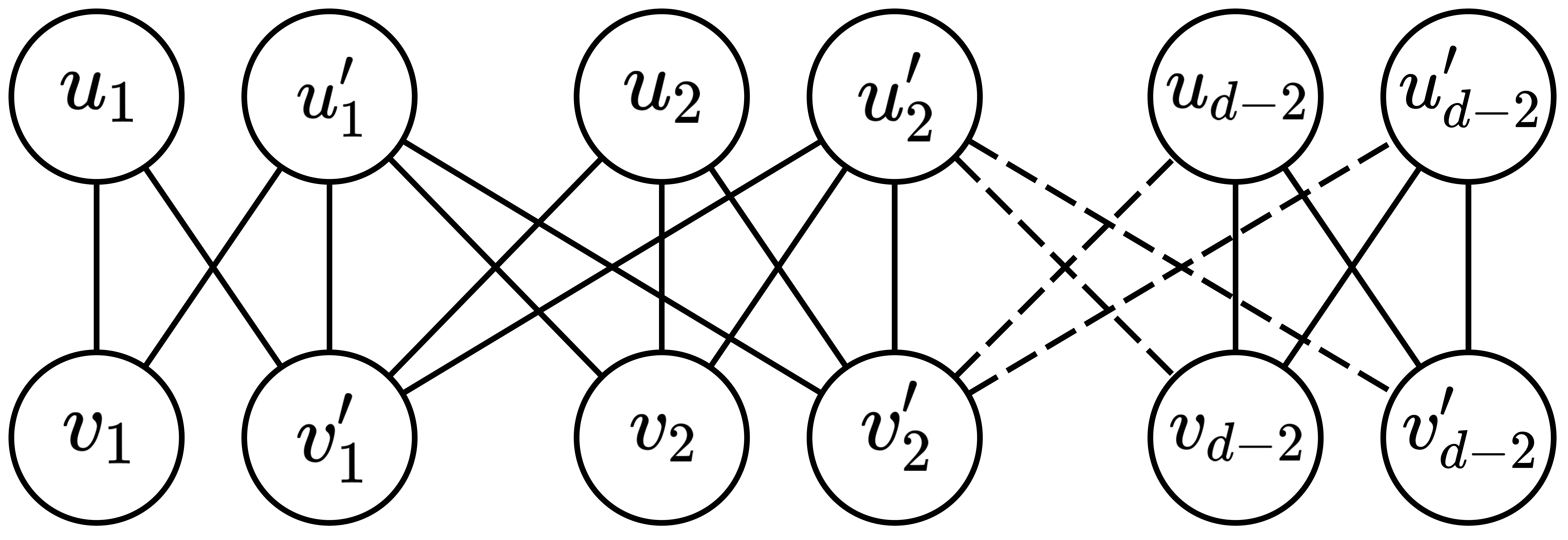}
   \end{center}
   \caption{The bipartite chain composed of $K_{3,3}$}
   \label{kdiam}
\end{figure}

\begin{theorem}\label{NPcbip4diam}
   Deciding if a  bipartite graph $G$ with $diam(G)=4$ has a perfect matching-cut is NP-complete.
\end{theorem}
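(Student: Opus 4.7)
The plan is to mimic the reduction used for the diameter-three theorem, but with a bipartite gadget in place of the cliques. Since $G'$ must now be bipartite, the role of the immune cliques $K_{v_i^1,v_i^2}\cup\{v_i^1,v_i^2\}$ in the previous proof has to be taken over by a bipartite ``perfectly immune'' substructure; the natural candidate, suggested by Figure \ref{kdiam}, is a chain of copies of $K_{3,3}$. A short case check shows that $K_{3,3}$ cannot be disconnected by any matching, hence is perfectly immune, and the same holds for a chain in which consecutive copies are glued along a $K_{2,2}$ or pair of vertices. I would start from a bipartite instance of matching-cut, say the $\{3,4\}$-biregular version of Le and Randerath \cite{LeRanderath} (or from Theorem \ref{NPcbip5reg}), so that bipartiteness is automatic.

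Given such a bipartite graph $G=(S\cup T,E)$, I would form $G'$ by taking two copies $G_1$ and $G_2$ and, for every vertex $v\in V(G)$, joining its two copies $v_1,v_2$ by a $K_{3,3}$-chain of fixed length, as in Figure \ref{kdiam}. The chain is attached so that $v_1$ and $v_2$ lie on opposite sides of the bipartition of their respective attachment $K_{3,3}$, so that $G'$ stays bipartite. The chain length is tuned so that $dist_{G'}(v_1,v_2)$ is at most $4$ for every $v$; since any two vertices within $G_i$ are already at distance at most $\mathrm{diam}(G)$ via their copy and at most $4$ via the two chains through any common neighbour, a direct check gives $\mathrm{diam}(G')\le 4$, and an explicit pair of vertices at distance $4$ gives equality.

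For the equivalence, the forward direction is routine: a matching-cut $M$ of $G$ lifts to $M\cup M\subset E_1\cup E_2$ together with a perfect matching chosen inside each $K_{3,3}$-chain (using Fact \ref{bridge}-style local arguments within each chain to ensure the chain is perfectly matched without creating a new crossing edge), producing a perfect matching-cut $M'$ of $G'$ with $E(V_1,V_2)\cap M'=\emptyset$. The reverse direction uses Fact \ref{clique}: since every $K_{3,3}$ in a chain is perfectly immune, the whole chain must end up on one side of the cut induced by $M'$, and then by Fact \ref{2neighX} the two endpoints $v_1,v_2$ lie on the same side as well; consequently the cut of $M'$ restricted to $G_i$ (for whichever copy is actually disconnected) is a matching-cut of $G$.

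The main obstacle will be the diameter bookkeeping: the chain must be long enough that a cut inside it is impossible (i.e.\ the entire chain falls on one side of $M'$), yet short enough that $\mathrm{diam}(G')\le 4$ still holds through the chains, and the bipartition of $G'$ must be compatible with the parities forced by the $K_{3,3}$-gluing. Choosing the precise attachment (how many vertices of the end $K_{3,3}$ are identified with $v_1$, resp.\ $v_2$, and on which side of the bipartition) is where the argument needs to be done carefully; once this is fixed, the generalization to arbitrary fixed $d\ge 4$ announced in the introduction follows by inserting a longer $K_{3,3}$-chain.
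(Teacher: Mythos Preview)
Your proposal has a genuine gap in the diameter argument. You start from the Le--Randerath (or Theorem~\ref{NPcbip5reg}) bipartite instance $G$, whose diameter is unbounded, and then join each pair $v_1,v_2$ by a local $K_{3,3}$-chain. But such chains only create short paths between the two copies of the \emph{same} vertex; they do nothing for two vertices $u_1,w_1\in V_1$ that are far apart in $G_1$. Your sentence ``at most $4$ via the two chains through any common neighbour'' presupposes that any two vertices have a common neighbour, which is false in a general bipartite graph. So $\mathrm{diam}(G')$ will be at least $\mathrm{diam}(G)$, not $4$, and the reduction does not land in the target class.

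The paper's proof avoids this in two ways. First, it reduces from the Le--Le result that Matching-Cut is NP-complete for bipartite graphs of diameter~$3$, so the source graph already has small diameter. Second, instead of a fixed-size $K_{3,3}$-chain, each vertex $v_i$ is equipped with an independent set $Q_{v_i^k}$ of size $n$, with $Q_{v_i^1}\cup Q_{v_i^2}\cong K_{n,n}$; crucially, \emph{extra matching edges are added between the $Q$-sets of different vertices} (one edge between $Q_{v_i^k}$ and $Q_{v_j^\ell}$ for each relevant pair $i\ne j$). These cross-edges are what force $\mathrm{diam}(G')=4$: they give length-$3$ or length-$4$ paths between arbitrary pairs, independent of distances in $G$. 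The $K_{3,3}$-chain of Figure~\ref{kdiam} that you invoke is used only \emph{afterwards}, to stretch the diameter from $4$ to an arbitrary fixed $d\ge 4$ (Theorem~\ref{NPckdiam}); it is not the mechanism that achieves diameter~$4$ in the first place.
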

\begin{proof}
In Le and Le \cite{lele} it is shown that deciding whether a bipartite graph $G$ with $diam(G)=4$ has a matching-cut is NP-complete. Their proof inspired ours.

From a bipartite graph $G=(V=A\cup B,E)$ with $diam(G)=3$, we build (in polynomial time) a connected graph $G'=(V',E')$ as follows.
We take two copies $G_1=(V_1=A_1\cup B_1,E_1)$, $G_2=(V_2=A_2\cup B_2,E_2)$ of $G$. We denote  $V=\{v_1,v_2,\ldots,v_n\},\, V_1=\{v_1^1,v_2^1,\ldots,v_n^1\},\, V_2=\{v_1^2,v_2^2,\ldots,v_n^2\}$. For each $1\le i\le n$, let $Q_{v_i^1}$ be an independent set of $n$ vertices. We add edges between $v_i^1$ and $Q_{v_i^1}$ so that $v_i^1$ is complete to $Q_{v_i^1}$. We say that $Q_{v_i^1}$ is the associated independent set of $v_i^1$. We do the  same  with the vertices of $G_2$. Then for each pair $v_i^1\in V_1,\, v_i^2\in V_2$ we make $Q_{v_i^1}$ complete to $Q_{v_i^2}$.  So $Q_{v_i^1}\cup Q_{v_i^2}$ is isomorphic to $K_{n,n}$. We denote the following: $Q_{v_i^1,v_i^2}=Q_{v_i^1}\cup Q_{v_i^2}\cup \{v_i^1,v_i^2\}$, $Q_{A_k}=\bigcup_{v\in A_k}Q_v$, $Q_{B_k}=\bigcup_{v\in B_k}Q_v$, $Q_k=Q_{A_k}\cup Q_{B_k}$, $k=1,2$, and $Q=Q_{1}\cup Q_{2}$. Note that $Q_{v,v'}$ is immune.

For each pair $v_i\in A,v_j\in A,i\neq j,$ we add exactly one edge between $Q_{v_i^1}$ and $Q_{v_j^2}$ and exactly one edge between $Q_{v_i^2}$ and $Q_{v_j^1}$. We perform this operation in such a way that the edges we added form a matching. We perform in a same manner with the pairs $v_i\in B,v_j\in B,i\neq j$. For each pair $v_i\in A,v_j\in B$, we add exactly one edge between $Q_{v_i^1}$ and $Q_{v_j^1}$ and exactly one edge between $Q_{v_i^2}$ and $Q_{v_j^2}$. We perform this operation in such a way that these edges plus the previous one form a matching. At this point, $G'$ is constructed.
Note that $A_1\cup Q_{B_1}\cup Q_{A_2}\cup B_2$ and $B_1\cup Q_{A_1}\cup Q_{B_2}\cup A_2$ are two disjoint independent sets that form a partition of $V'$ the vertex-set of $G'$. So $G'$ is bipartite. \\

We show that $diam(G')=4$. Since $diam(G)=3$, for every pair $v_i,v_j\in V_k$, $k=1,2$, we have $dist_{G'}(v_i,v_j)\leq 3$. Then for every $v_i\in V_k$, $q\in Q_k$, $i=1,2$, we have $dist_{G'}(v_i,q)\leq 4$. Note that since $diam(G)=3$, for every pair $v_i\in A,\, v_j\in A,\, i\ne j$ or $v_i\in B,\, v_j\in B,\, i\ne j$ we have $dist_{G}(v_i,v_j)=2$. Then for every pair $q,q'\in Q_{A_1}$ we have $dist_{G'}(q,q')\le 3$. The same holds for every pair in $Q_{A_2}$ or $Q_{B_1}$ or $Q_{B_2}$. Let $q_i\in Q_{v_i^1},\, q_j\in Q_{v_j^2}$. When $i=j$ then $dist_{G'}(q_i,q_j)=1$. Now, let $i\ne j$. First, $v_i,v_j\in A$. There exists a path $P=q_i-q_2-q_1-q_j$ where $q_2q_1$ is the edge between $Q_{v_i^2}$ and $Q_{v_j^1}$, so $dist_{G'}(q_i,q_j)\le 3$. Second, $v_i\in A,\, v_j\in B$. There exists a path $P=q_i-v_i-q_a-q_b-q_j$ where $q_aq_b$  is the edge between $Q_{v_i^1}$ and $Q_{v_j^1}$, so $dist_{G'}(q,q')\le 4$. Now, let $q_i\in Q_{v_i^1},\, q_j\in Q_{v_j^1}$ with $v_i\in A,\, v_j\in B$. There exists a path $P=q_i-q_a-q_b-q_j$ where $q_aq_b$  is the edge between $Q_{v_i^2}$ and $Q_{v_j^2}$, so $dist_{G'}(q_i,q_j)\le 3$. Now, let $q_i\in Q_{v_i^1},\, v_j^2\in V_2$. First, $v_i,v_j\in A$. There exists a path $P=q_i-v_i-q_i'-q_j-v_j$ where $q_i'q_j$ is the edge between $Q_{v_i^1}$ and $Q_{v_j^2}$, so $dist_{G'}(q_i,v_j)\le 4$. Second, $v_i\in A,\, v_j\in B$. There exists a path $P=q_i-q_a-q_j-v_j$ where $q_aq_j$  is the edge between $Q_{v_i^2}$ and $Q_{v_j^2}$, so $dist_{G'}(q_i,v_j)\le 3$. All the remaining cases are symmetric. So $diam(G')=4$. \\

From $M\subset E$ a matching-cut of $G$ corresponds $M'\subset E'$ as follows. Let $(X,Y)$ be a partition of $G$ induced by the matching-cut $M$. Let $x\in X$, $y\in Y$.
If $x\in A$ and $y\in B$ then we take $q_{x_1}q_{y_1}, q_{x_2}q_{y_2}$ in $M'$. This operation is depicted on the left of Figure \ref{bip4diam1}. Otherwise $x,y\in A$, resp. $x,y\in B$, and we take $q_{x_1}q_{y_2},q_{x_2}q_{y_1}$ in $M'$. See figure \ref{bip4diam1} on the right. For each $uv\in M$ we take $u_1v_1,u_2v_2$ in $M'$. At this point we clearly have a matching-cut of $G'$. We show how to take the remaining edges so that $M'$ is a perfect matching-cut.

Let $v_i\in V$ be a vertex that is not covered by $M$. We chose two uncovered vertices $q_1,q_2$ in $Q_{v_i^1,v_i^2}$ and  take $v_i^1q_1,v_i^2q_2$ in $M'$. Now, for each $Q_{v_i^1,v_i^2}$ it remains the same number of uncovered vertices in $Q_{v_i^1}$ as in $Q_{v_i^2}$. Since $Q_{v_i^1,v_i^2}$  is a complete bipartite graph we can take any matching with the remaining uncovered vertices in $M'$.
This operation is depicted by the figure \ref{bip4diam2}. Clearly, $M'$ is a perfect matching-cut.

Reciprocally, let $M'$ be a perfect matching-cut of $G'$. Recall that for each pair $v_i^1\in V_1$, $v_i^2\in V_2$,  the subgraph $Q_{v_i^1,v_i^2}$ is immune. So $G'[V_i],i\in\{1,2\}$ is disconnected. Let $M_1=M'\cap E_1$, for instance. To $M_1$ corresponds a matching-cut of $G$.
\end{proof}

% TODO Figure changé
\begin{figure}[htbp]
   \begin{center}
   \includegraphics[width=15cm, height=5cm, keepaspectratio=true]{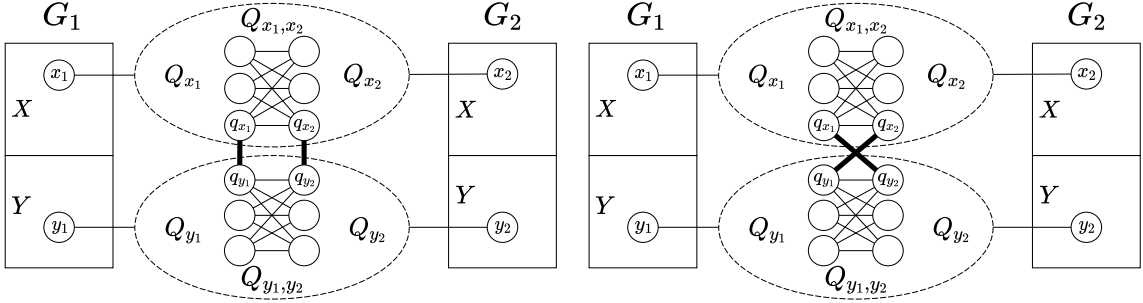}
   \end{center}
   \caption{On the left, $x_1\in A_1, x_2\in A_2, y_1\in B_1, y_2\in B_2$ and $q_{x_1}q_{y_1}, q_{x_2}q_{y_2}$ are in the perfect matching-cut of $G'$. On the right, $x_1, y_1\in A_1, x_2,y_2\in A_2$ and $q_{x_1}q_{y_2}, q_{x_2}q_{y_1}$ are in the perfect matching-cut of $G'$.}
   \label{bip4diam1}
\end{figure}

% TODO Figure changé
\begin{figure}[htbp]
   \begin{center}
   \includegraphics[width=15cm, height=5cm, keepaspectratio=true]{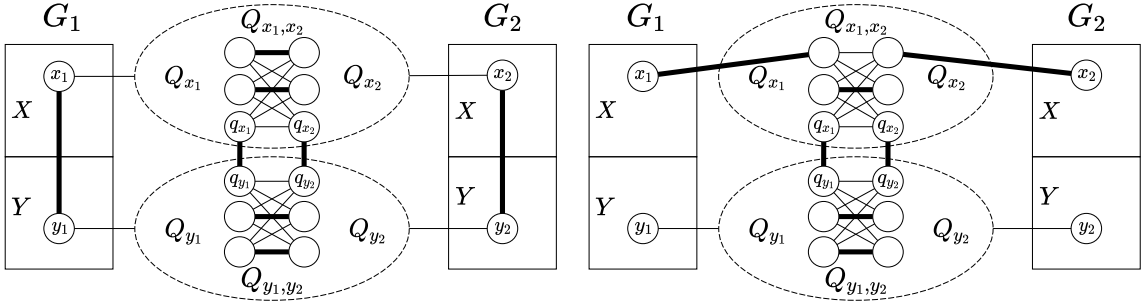}
   \end{center}
   \caption{On the left $x_1y_1$, $x_2y_2$ are covered by the matching-cut of $G$. On the right $x_1$ and $x_2$ are not covered by the matching-cut of $G$.}
   \label{bip4diam2}
\end{figure}

Using the preceding arguments but adding the bipartite graph $H$ of Figure \ref{kdiam} such that $u_1,u_1'$ are complete to $Q_{w^1}$ and $v_1,v_1'$ are complete to $Q_{w^2}$ where $w^1,w^2$ correspond to the two copies of  a vertex $w$ of $G$, we obtain  a bipartite graph $G'$ with $diam(G')=d,d\geq 3$. Clearly, $H\cup Q_{w^1}\cup Q_{w^2}$ is immune and $H$ has a perfect matching $u_1v_1, u_1'v_1',...,u_{d-2}v_{d-2},u_{d-2}'v_{d-2}'$. Hence the following theorem follows.

\begin{theorem}\label{NPckdiam}
   For every fixed $d\ge 4$, deciding if a bipartite graph $G$ with $diam(G)=d$ has a perfect matching-cut is NP-complete.
\end{theorem}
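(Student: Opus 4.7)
The plan is to build on the diameter-$4$ reduction of Theorem \ref{NPcbip4diam} and stretch the diameter to any fixed $d\ge 4$ by gluing in the bipartite chain $H$ of Figure \ref{kdiam}, which is a path-like concatenation of $d-2$ copies of $K_{3,3}$. Given a bipartite graph $G$ with $diam(G)=3$, first apply the construction of Theorem \ref{NPcbip4diam} verbatim to obtain a bipartite graph with $diam=4$; then choose a single vertex $w\in V(G)$ and identify its two copies $w^1\in V_1$, $w^2\in V_2$. Attach $H$ by making the pair $u_1,u_1'$ at one end of the chain complete to $Q_{w^1}$ and the pair $v_1,v_1'$ at the other end complete to $Q_{w^2}$, as indicated by the figure. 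Call the resulting graph $G''$; this is the instance produced by the reduction.

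Three structural properties of $G''$ need verification. First, $G''$ is bipartite: the chain $H$ admits a proper $2$-coloring in which the attachment pairs at either end lie in the opposite colour class from $Q_{w^1}$ and $Q_{w^2}$ respectively, and the bipartition of $G'$ then extends consistently. Second, $diam(G'')=d$: the upper bound follows by distinguishing whether the two endpoints of a shortest path lie inside $G'$, inside $H$, or straddle, and bounding each case using the existing $diam(G')=4$ together with the length $d-4$ contributed by traversing $H$; the lower bound is witnessed by a pair of vertices, one deep inside $H$ and one far from $w^1,w^2$ inside $G_1$ or $G_2$. Third, $H\cup Q_{w^1}\cup Q_{w^2}$ is immune: each $K_{3,3}$ along the chain cannot be disconnected by any matching, and the twin attachments $u_1,u_1'$ to $Q_{w^1}$ and $v_1,v_1'$ to $Q_{w^2}$ provide enough parallel connections that no matching can separate the chain from the $Q$-sides.

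The reduction equivalence is then a short addendum to the argument of Theorem \ref{NPcbip4diam}. Given a matching-cut of $G$, build a perfect matching-cut $M'$ of $G'$ as in that proof and extend it by the canonical perfect matching $\{u_iv_i,u_i'v_i':1\le i\le d-2\}$ of $H$; immunity forces $H\cup Q_{w^1}\cup Q_{w^2}$ to sit entirely on one side of the cut, so no added edge crosses it, yielding a perfect matching-cut of $G''$. Conversely, from a perfect matching-cut $M''$ of $G''$, immunity of each $Q_{v_i^1,v_i^2}$ and of $H\cup Q_{w^1}\cup Q_{w^2}$ forces $M''$ to separate $V_1$ from $V_2$ exactly as in Theorem \ref{NPcbip4diam}, so $M''\cap E_1$ projects to a matching-cut of $G$. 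I expect the main obstacle to be the precise diameter computation $diam(G'')=d$: it must combine the already nontrivial distance analysis inside $G'$ with the distances contributed by traversing the chain, and one must be careful not to overshoot or undershoot $d$; justifying immunity of $H\cup Q_{w^1}\cup Q_{w^2}$, stated as clear, is the other place where a short but careful argument is required.
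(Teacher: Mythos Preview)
Your proposal is correct and follows exactly the approach sketched in the paper: apply the diameter-$4$ construction of Theorem \ref{NPcbip4diam}, then attach the bipartite chain $H$ of Figure \ref{kdiam} with $u_1,u_1'$ complete to $Q_{w^1}$ and $v_1,v_1'$ complete to $Q_{w^2}$, noting that $H\cup Q_{w^1}\cup Q_{w^2}$ is immune and that $H$ carries the perfect matching $\{u_iv_i,u_i'v_i':1\le i\le d-2\}$. The paper's own proof is only the short paragraph preceding the theorem statement; your outline is in fact more explicit than the paper about the three verification steps (bipartiteness, exact diameter, immunity) and about where the work lies.
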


\subsection{Polynomial classes}
Whereas the Perfect Matching-Cut problem is NP-complete for bipartite graphs with diameter four, here we prove that the Perfect Matching-Cut is polynomial  for the bipartite graphs with diameter three. Afterward, we prove that the Perfect Matching-Cut problem is polynomial  for the graphs with diameter two.\\

For our proof  we use the following characterization given in \cite{lele} (see Fact 2).
\begin{fact}\label{factdiam3bip}
Let $G=(V_1\cup V_2, E)$ be a bipartite graph.  Then $diam(G)\le 3$ if and only if $N(a)\cap N(b)\ne\emptyset$ for any pair of distinct vertices $a,b$ in the same side of the bipartition.
\end{fact}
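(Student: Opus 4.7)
The plan is to prove both directions directly, leveraging the parity constraint that bipartiteness imposes on path lengths.

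For the forward direction, I would assume $diam(G)\le 3$; in particular $G$ is connected and $dist_G(a,b)$ is finite for every pair $a,b$. Let $a,b$ be distinct vertices on the same side, say $V_1$. Any $a$--$b$ walk alternates between $V_1$ and $V_2$, so every $a$--$b$ path has even length. Combined with $1\le dist_G(a,b)\le 3$, this forces $dist_G(a,b)=2$, and the middle vertex of a shortest $a$--$b$ path witnesses $N(a)\cap N(b)\ne\emptyset$.

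For the backward direction, I would assume that every pair of distinct vertices on the same side has a common neighbor, and then bound $dist_G(u,v)$ for arbitrary distinct $u,v\in V$. If $u,v$ lie on the same side, the hypothesis yields a path of length $2$ directly, so $dist_G(u,v)\le 2$. If instead $u\in V_1$ and $v\in V_2$ with $uv\notin E$, I would pick any neighbor $w\in V_1$ of $v$, observe that $w\ne u$ since $uv\notin E$, and apply the hypothesis to $\{u,w\}\subseteq V_1$ to obtain $z\in N(u)\cap N(w)\subseteq V_2$; then $u$--$z$--$w$--$v$ is a path of length $3$, so $dist_G(u,v)\le 3$.

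The only delicate point is ensuring that the neighbor $w$ of $v$ actually exists, i.e.\ that $v$ is not isolated. This is the main (and still minor) obstacle: one observes that the hypothesis applied to $v$ together with any other vertex on its side produces a common neighbor of both, which is in particular a neighbor of $v$, so $v$ is not isolated as soon as its side contains at least two vertices. The degenerate case in which one side of the bipartition has a single vertex reduces to a star, for which the equivalence is immediate.
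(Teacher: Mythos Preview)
Your argument is correct, and in fact the paper does not supply its own proof of this fact at all: it is simply quoted from Le and Le~\cite{lele} (their Fact~2) and used as a black box. Your parity-based reasoning---even distance within a part, odd distance across parts---is exactly the standard way to establish this characterization, so there is nothing to compare against.

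One microscopic quibble: your degenerate-case discussion covers the situation where one side is a singleton, but not the case where \emph{both} sides are singletons with no edge between them (the common-neighbor hypothesis is then vacuous while the diameter is infinite). This is harmless here, since throughout the paper all graphs under consideration are connected, and you could simply fold connectedness into the standing hypotheses.
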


We also use the following observations. Let $G=(V_1\cup V_2,E)$ be a bipartite graph, and $M$ be a perfect matching-cut with $(X,Y)$ its corresponding partition.
% TODO expliquer corresponding partition

\begin{fact}\label{neighmat}
   Let $uv\in M$ with $u\in X, v\in Y$. Then $N(u)\setminus \{v\} \subseteq X$ and $N(v)\setminus \{u\}\subseteq Y$.
\end{fact}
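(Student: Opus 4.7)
The plan is to exploit two facts simultaneously: first, that $M$ is a matching (so $u$ has a unique mate in $M$, namely $v$), and second, that the matching-cut contained in $M$ covers \emph{every} edge crossing the partition $(X,Y)$. By the definition of a perfect matching-cut together with the hypothesis that $(X,Y)$ is the partition corresponding to $M$, we have $E(X,Y)\subseteq M$, i.e.\ every edge of $G$ with one endpoint in $X$ and the other in $Y$ belongs to $M$.

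Concretely, I would pick any $w\in N(u)\setminus\{v\}$ and suppose for contradiction that $w\in Y$. Then $uw$ is an edge from $X$ to $Y$, so $uw\in E(X,Y)\subseteq M$. But $uv\in M$ as well, and $u\ne w$, which contradicts the fact that $M$ is a matching (the vertex $u$ would be incident to two edges of $M$). Hence $w\in X$, proving $N(u)\setminus\{v\}\subseteq X$. The inclusion $N(v)\setminus\{u\}\subseteq Y$ is obtained by the symmetric argument with the roles of $X$ and $Y$ swapped.

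There is essentially no obstacle here: the statement is a direct unpacking of the definitions, and the bipartiteness of $G$ is not actually used (the same proof works for any graph admitting a perfect matching-cut). The only point that must be made explicit is that the partition $(X,Y)$ associated with $M$ satisfies $E(X,Y)\subseteq M$, which is exactly what it means for $M$ to \emph{contain} the matching-cut $E(X,Y)$; once this is noted, the contradiction with $M$ being a matching is immediate.
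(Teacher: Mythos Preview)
Your proof is correct and is exactly the natural argument; the paper itself states this fact as an observation without giving any proof, so there is nothing further to compare.
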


\begin{fact}\label{twoneigh}
   If a vertex $v$ has two neighbors in $X$, respectively in $Y$, then $v\in X$, respectively $v\in Y$.
\end{fact}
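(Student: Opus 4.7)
The plan is to reduce Fact \ref{twoneigh} to the same counting argument that already proved Fact \ref{2neighX}, merely exploiting that in a bipartite graph the two sides of the matching-cut partition are, so to speak, interchangeable.

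First I would fix the setup: assume $M$ is a perfect matching-cut of $G$ with associated partition $(X,Y)$, so that $E(X,Y)\subseteq M$, and suppose for contradiction that $v$ has two distinct neighbors $x_1,x_2\in X$ while $v\notin X$, i.e.\ $v\in Y$. Then both edges $vx_1$ and $vx_2$ lie in $E(X,Y)$, hence in $M$. This contradicts the fact that $M$ is a matching, since $v$ would be incident to two edges of $M$. Therefore $v\in X$, as claimed.

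The symmetric case (two neighbors in $Y$ forcing $v\in Y$) follows by exchanging the roles of $X$ and $Y$; in the bipartite framework this symmetry is genuine, so nothing more needs to be said. Alternatively, one can simply invoke Fact \ref{2neighX} directly, which already gives the statement for $X$, and then apply it with $Y$ in the role of $X$ (swapping the labels of the two sides of the cut).

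There is no real obstacle here: the argument is a one-line application of the matching property of $M$ combined with the assumption $E(X,Y)\subseteq M$. The only thing worth noting is that the bipartition of $G$ itself plays no role in the proof — what matters is solely the partition $(X,Y)$ induced by the cut, so the fact is in truth a restatement of Fact \ref{2neighX} tailored for the subsequent bipartite-diameter-three discussion.
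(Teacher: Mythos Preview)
Your argument is correct; the paper itself states Fact~\ref{twoneigh} as a bare observation without proof, and your reasoning is exactly the one-line matching argument used in the paper's proof of the earlier, essentially identical Fact~\ref{2neighX}. Your remark that the bipartition of $G$ plays no role is also accurate.
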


\begin{fact}\label{neighxy}
   If a vertex $v$ has a neighbor in $x\in X$ and another neighbor in $y\in Y$, such that $xy\in M$, then $M$ cannot be a perfect matching-cut.
\end{fact}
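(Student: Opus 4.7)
The plan is to argue by contradiction, exploiting the defining property that when $M$ is a perfect matching-cut with associated partition $(X,Y)$, every edge between $X$ and $Y$ must belong to $M$ (since $E(X,Y) \subseteq M$ is what makes the cut a subset of the matching). Under this lens, the hypothesis that $v$ is adjacent to both endpoints of the cut edge $xy$ will immediately force some vertex to be incident with two edges of $M$.

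Concretely, I would first observe that since $x \in X$, $y \in Y$, and $M$ is a perfect matching-cut whose cut contains $xy$, the edge $xy$ is one of the matching edges of $M$. Next I would do a simple dichotomy on the side of $v$ (which is either in $X$ or in $Y$, since $(X,Y)$ is a partition of $V$). If $v \in X$, then the edge $vy$ crosses the partition and therefore lies in $E(X,Y) \subseteq M$; but then both $xy$ and $vy$ are edges of $M$ incident to $y$, contradicting the fact that $M$ is a matching. If instead $v \in Y$, the symmetric argument applied to $vx \in E(X,Y) \subseteq M$ shows that $x$ is covered twice by $M$, again a contradiction.

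There is essentially no obstacle here: the fact is a direct consequence of combining the definition of perfect matching-cut with the trivial observation that $v$ sits on exactly one side of the bipartition. The proof reduces to two lines of case analysis, and no bipartiteness of $G$ is even used — the statement would hold for arbitrary graphs. The only mild care needed is to record that $v \notin \{x,y\}$ (because $x$ and $y$ are neighbors of $v$ in a loopless graph), so the three vertices are genuinely distinct and the second incidence at $x$ or $y$ really is a second matching edge.
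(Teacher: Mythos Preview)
Your proof is correct and is exactly the natural argument; the paper states this fact without proof, treating it as immediate from the definition, so your case analysis on the side of $v$ is precisely what the authors had in mind.
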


We are ready to show the following.
\begin{theorem}\label{bipdiam3}
  Let $G=(V_1\cup V_2,E)$ be a bipartite graph with $diam(G)\le 3$. Deciding if $G$ has a perfect matching-cut can be done in polynomial time.
 \end{theorem}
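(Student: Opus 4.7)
The plan is an enumerate-and-propagate algorithm. Since any perfect matching-cut of $G$ contains at least one cross edge $uv\in E(X,Y)$, I would try each edge $uv\in E$ with both orientations ($u\in X$ or $u\in Y$) as a guess for such a cross edge, giving $O(|E|)$ candidates; for each guess I would decide in polynomial time whether a compatible PMC exists, so the overall running time is polynomial.

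Fix a guess, say $u\in X$, $v\in Y$, with $u\in V_1$, $v\in V_2$. Fact~\ref{neighmat} immediately forces $A:=N(u)\setminus\{v\}\subseteq X$ and $B:=N(v)\setminus\{u\}\subseteq Y$. Next, Fact~\ref{factdiam3bip} applied to $u$ and any other $a\in V_1$ forces $a$ to share a neighbor with $u$, hence either $a\in B$ (so $a\in Y$) or $a$ has a neighbor in $A$; the symmetric conclusion holds on $v$'s side. I then iteratively apply Fact~\ref{twoneigh}: any vertex with at least two neighbors in the currently-forced $X$-part joins $X$, and similarly for $Y$. Throughout, Fact~\ref{neighxy} is used to reject inconsistent configurations, and the current guess is discarded as soon as a contradiction arises.

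At the fixed point of propagation, every still-undetermined vertex $w$ has at most one neighbor in the forced $X$-part and at most one in the forced $Y$-part. Moreover, an undetermined $w\in V_1$ must have exactly one neighbor in $A$: having none would place $w$ in $B\subseteq Y$, and having two or more would have forced $w$ into $X$ via Fact~\ref{twoneigh}; an analogous statement holds in $V_2$. This local rigidity should reduce the task of resolving the remaining freedom, together with completion of the matching inside $G[X]$ and $G[Y]$, to a polynomial-time bipartite matching problem (Hopcroft--Karp, say). If some guess produces a valid partition that can be extended to a perfect matching, the algorithm answers \emph{yes}; otherwise, \emph{no}.

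The hard part will be this last step: formally establishing that the placement of the undetermined vertices, combined with the completion of $M$ inside each side, can always be decided in polynomial time. The delicate point is to rule out exponential branching arising from interactions among the undetermined vertices and their shared neighbors, and for this the diameter-3 bipartite constraint (Fact~\ref{factdiam3bip}) is the key leverage, by severely restricting how undetermined vertices may relate to one another and to the already-forced sides.
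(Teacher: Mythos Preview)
Your outline shares the overall enumerate-and-propagate shape with the paper, but it has a real gap precisely where you flag one: the ``hard part'' is in fact the whole proof, and the resolution is not a reduction to bipartite matching. Concretely, the paper does not guess a single cross edge; it first disposes of bridges and $\delta(G)=1$, argues that any remaining perfect matching-cut must contain at least two cross edges with opposite orientations, namely $ab$ with $a\in V_1\cap X$, $b\in V_2\cap Y$ and $cd$ with $c\in V_2\cap X$, $d\in V_1\cap Y$, and then enumerates the $O(|V|^2)$ such pairs. This second guess is not a convenience: it is what makes the sets $A=N(a)\setminus\{b,c\}$, $B=N(b)\setminus\{a,d\}$, $C=N(c)\setminus\{a,d\}$, $D=N(d)\setminus\{b,c\}$ together with $S,T$ partition the whole vertex set (via Fact~\ref{factdiam3bip}), and it is what gives every undetermined vertex of $V_1$ exactly one neighbor in $A\subseteq X$ \emph{and} exactly one in $D\subseteq Y$ (and symmetrically for $V_2$). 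With only one guessed edge your undetermined $w\in V_1$ is only guaranteed one neighbor in $A$; there is no forced $Y$-anchor in $V_2$ for it, so the ``local rigidity'' you rely on does not materialise.

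Even after the two-edge guess, the paper does not reduce the residual freedom to a matching computation. Instead it uses Fact~\ref{factdiam3bip} again: every pair $v_a\in A'$, $v_d\in D'$ must share a common neighbor, and that neighbor lies in $S'$; hence $|S'|\ge |A'|\cdot|D'|$, while a matching-cut forces $|S'|\le |A'|+|D'|$. This yields $\alpha\delta\le\alpha+\delta$, so $\delta\le 1$ or $\alpha=\delta=2$, leaving at most $O(|E|)$ matchings on each side and $O(|E|^2)$ combinations to test, each followed by a single perfect-matching check. Your proposal contains neither the two-edge guess nor this counting argument, and the hoped-for Hopcroft--Karp reduction does not replace them: deciding, for a set of undetermined vertices each with one $X$-anchor and one $Y$-anchor, which side to place them on so that the cut is a matching \emph{and} the remainder completes to a perfect matching is not obviously a single bipartite matching instance.
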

\begin{proof}
We assume that $G$ has a perfect matching, thus $\vert V_1\vert=\vert V_2\vert$. If $uv$ is a bridge and there exists $M$ a perfect matching-cut such that $uv\in M$ then $G$ has a perfect matching-cut. Now we can assume that $\delta(G)\ge 2$ and if $M$ is a perfect matching-cut of $G$ then $\vert E(X,Y)\vert \ge 2$.

We try to build $M$ a perfect matching-cut with a partition $(X,Y)$. We guess two edges $ab,cd$ with $a,d\in V_1$, $b,c\in V_2$ such that  $\{ab,cd\}\subseteq M$. When a perfect matching-cut $M$ is found the algorithm stops, otherwise we try another pair of edges.

We show that when $G$ has a perfect matching-cut, then there exists $M$ a perfect matching-cut such that $\{a,c\}\subseteq X,\, \{b,d\}\subseteq Y$. Let $a\in X,\, b\in Y$. For contradiction we assume that $V_2\subseteq Y$. Since $d(a)\ge 2$, by Fact \ref{twoneigh} we have $a\in Y$, a contradiction.\\

Hence we put $a,c\in X,\,b,d\in Y$. Note that there are $\mathcal{O}(\vert V\vert ^2)$ such combinations.\\

Note that $G$ being bipartite we have $N(a)\cap N(b)=N(a)\cap N(c)=N(b)\cap N(d)=N(c)\cap N(d)=\emptyset$. Moreover, by Fact \ref{neighxy} if there exists $v\in N(a)\cap N(d)$ or $v\in N(b)\cap N(c)$ then $M$ cannot exist. Hence we have $N(a)\cap N(d)=N(b)\cap N(c)=\emptyset$.\\

We define the following sets of vertices:

\begin{itemize}
   \item $A=N(a)\setminus\{b,c\},\, B=N(b)\setminus\{a,d\},\, C=N(c)\setminus\{a,d\},\, D=N(d)\setminus\{b,c\}$;
   \item $S=\{v\in V_1 \mid v\not\in N(b)\cup N(c),\, N(v)\cap A\ne\emptyset,\, N(v)\cap D\ne\emptyset\}$;
   \item $T=\{v\in V_2 \mid v\not\in N(a)\cup N(d),\, N(v)\cap B\ne\emptyset,\, N(v)\cap C\ne\emptyset\}$.
\end{itemize}

We show that $A,B,C,D,S,T,\{a,b,c,d\}$ is a partition of $V_1\cup V_2$. The subsets are pairwise disjoint. By contradiction, we assume that there exists $v\in V_1$ that is not in one of the previous subsets. By Fact \ref{factdiam3bip} $v$ and $a$ have a common neighbor $w$.  Since $N(a)\subseteq A\cup \{b,c\}$ we have a contradiction. The case $v\in V_2$ is the same.\\

By Fact \ref{neighmat} we have $A\cup C\subseteq X$ and $B\cup D\subseteq Y$. Let $v\in A$. By Fact \ref{twoneigh}, if $v$ has two neighbors in $B$ then $v\in Y$, so $M$ cannot exist. The situation is the same when a vertex of $B$ has two neighbors in $A$, a vertex of $C$ has two neighbors in $D$, a vertex of $D$ has two neighbors in $C$. If $v$ has exactly one neighbor $w\in B$ then $vw\in M$ and by Fact \ref{neighmat} all its neighbors are put in $X$, and all the neighbors of $w$ are put in $Y$. We do  the same for the vertices of $B,C,D$. By Fact \ref{twoneigh} when $v\in S$ has two neighbors in $X$, resp. $Y$, then $v\in X$, resp. $v\in Y$. We do in a same way for $v\in T$. If a vertex is in both $X$ and $Y$ then $M$ cannot exist and we stop. By Fact \ref{twoneigh} if a vertex in $X$, resp. $Y$, has two neighbors in $Y$, resp. $X$, then $M$ cannot exist. \\

Let $S'=\{v\in  S \mid v\not\in X\cup Y\}$ and $T'=\{v\in  T \mid v\not\in X\cup Y\}$. By above and since $\delta(G)\ge 2$, each vertex $v\in S'$ has exactly one neighbor $v_a\in A$ and one neighbor $v_d\in D$, and each vertex $v\in T'$ has exactly one neighbor  $v_b\in B$ and one neighbor $v_c\in C$. Let $A'=\{v_a\in A \mid vv_a\in E,\, v\in S'\},\, D'=\{v_d\in D \mid vv_d\in E,\, v\in S'\},\, B'=\{v_b\in B\mid vv_b\in E,\, v\in T'\},\, C'=\{v_c\in C\mid vv_c\in E,\, v\in T'\}$. Note that from Fact \ref{twoneigh}, for every pair $v_a\in A',\, v_d\in D'$ we have $N(v_a)\cap N(v_d)\subseteq S'$. By symmetry, for every pair $v_b\in B',\, v_d\in D'$ we have $N(v_b)\cap N(v_c)\subseteq T'$. \\

For every $v\in S'$, resp. $v\in T'$, for $M$ to exist we have either $vv_a\in M$ or $vv_d\in M$, resp. $vv_b\in M$ or $vv_c\in M$. Hence every edge $st,s\in S',\, t\in T'$ is such that $st\not\in M$ and the two vertices $s,t$ will be assigned to a same subset $X$ or $Y$. \\

Let $\vert S'\vert=\mu,\, \vert A'\vert=\alpha,\, \vert D'\vert=\delta$. W.l.o.g we assume that $\alpha\ge \delta$ (the case $\alpha\le \delta$ being symmetric). By Fact \ref{factdiam3bip} and since each vertex of $S'$ has exactly one neighbor in $A'$ and one neighbor in $D'$, we have $\mu\ge\alpha\delta$. For $M$ to exist we need $\mu\leq \alpha+\delta$. Thus $ \alpha+\delta\ge \alpha\delta$, which is possible only for $\alpha=\delta=2$ or $\delta =1,\, \alpha\ge 1$. \\

Let $\alpha=\delta=2$. We denote $A'=\{v_a^1,v_a^2\},\, D'=\{v_d^1,v_d^2\},\, S'=\{v_1,v_2,v_3,v_4\}$. Then $G'=G[A'\cup D'\cup S']$ consists of the four paths $v_a^1-v_1-v_d^1$, $v_a^1-v_2-v_d^2$, $v_a^2-v_3-v_d^1$, $v_a^2-v_4-v_d^2$. There exist two perfect matchings of $G'$, that are, $M_a=\{v_a^1v_1,v_d^2v_2,v_d^1v_3,v_a^2v_4\},\, M_d=\{v_a^1v_2,v_d^2v_4,v_d^1v_1,v_a^2v_3\}$. \\

Let $\delta=1$. We have $\alpha\leq \mu\leq \alpha+1$. We denote $A'=\{v_a^1,\ldots,v_a^\alpha\},\, D'=\{v_d\},\, S'=\{v_1,\ldots,v_\mu\}$ and we assume that $v_a^iv_i\in E,1\le i\le\mu$. We denote  $G'=G[A'\cup D'\cup S']$.

First $\mu=\alpha$. Then $G'$ consists of $\alpha$ paths $v^1_a-v_1-v_d,\ldots,v^\alpha_a-v_\alpha-v_d$. Note that $G'$ has no perfect matching but recall that for each $v_i\in S'$, either $v_iv_a^i\in M$ or $v_iv_d\in M$. So in $G'$ there exists $\alpha+1$ matchings that disconnect $A'$ from $D'$.
These matchings are $M_0=\{v_a^1v_1,\ldots, v_a^{\alpha}v_\alpha\}$ and $M_i=\{v_iv_d\}\cup\{v_a^jv_j,\,1\le j\le \alpha,\, j\ne i\}$.

Second $\mu=\alpha +1$. Then $G'$ consists of the two paths $v_a^1-v_1-v_d,\, v_a^1-v_2-v_d$ and the $\alpha-1$ paths $v^2_a-v_3-v_d,\ldots,v^\alpha_a-v_{\alpha+1}-v_d$. There exists exactly two (perfect) matchings of $G'$ that disconnect $A'$ from $D'$, that are ${\bar M_1}=\{v_a^1v_1,v_dv_2,v_a^2v_3,\ldots,v^\alpha_av_{\alpha+1}\}$ and ${\bar M_2}=\{v_a^1v_2,v_dv_1,v_a^2v_3,\ldots,v^\alpha_av_{\alpha+1}\}$. \\

By symmetry, to $G''=G[B'\cup C'\cup T']$ correspond  the following matchings: either $M'_0=\{{v'}_a^1{v'}_1,\ldots, {v'}_a^{\alpha'}{v'}_{\alpha'}\}$ and $M'_i=\{v'_iv'_d\}\cup\{{v'}_a^jv'_j,1\le j\le \alpha',j\ne i\}$ or ${\bar M'_1}=\{{v'}_a^1v'_1,v'_dv'_2,{v'}_a^2v'_3,\ldots,{v'}^{\alpha'}_a-v'_{\alpha'+1}\}$ and ${\bar M'_2}=\{{v'}_a^1v'_2,v'_dv'_1,{v'}_a^2v'_3,\ldots,{v'}^{\alpha'}_a-v'_{\alpha'+1}\}$.
Hence there are $\mathcal{O}(\vert E\vert^2)$ combinations between the matchings of $G'$ and $G''$.

For each combination we test if $E(X,Y)$ is a matching-cut. If not then $M $ with $E(X,Y)\subseteq M$ cannot exist. Otherwise, let $X'\subseteq X$ such that $N(X')\cap Y=\emptyset$ and $Y'\subseteq Y$ such that $N(Y')\cap X=\emptyset$. We check if $G[X'\cup Y']$ has a perfect matching. If not, $M$ with $E(X,Y)\subseteq M$ cannot exist, else we have $M$ a perfect matching-cut of $G$.\\

We estimate the running time of our algorithm as follows. From \cite{Hopcroft}, we know that computing a perfect matching in a bipartite graph takes $\mathcal{O}(\vert V\vert^{\frac 5 2})$. To check if there exists a perfect matching that contains a bridge can be done in time $\mathcal{O}(\vert V\vert^{\frac 5 2})$. Now, there are $\mathcal{O}(\vert V\vert^2)$ pairs of edges $ab,cd$. Given a pair $ab,cd$, one can verify that the running time until the next pair is $\mathcal{O}(\vert V\vert^{\frac 5 2})$. Hence the complexity of the algorithm is $\mathcal{O}(\vert V\vert^{\frac 9 2})$.
\end{proof}

Now we give our result for the graphs with a diameter two.
\begin{theorem}\label{diam2}
   Let $G=(V,E)$ be a graph with $diam(G)=2$. Deciding whether  $G$ has a perfect matching-cut can be done in polynomial time.
\end{theorem}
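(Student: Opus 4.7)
The plan is to iterate over every edge $uv\in E$ as a candidate cross-edge of the matching-cut, propagate forced assignments using the observations of Section~\ref{pref} together with the diameter bound, and then solve a polynomial-time residual problem. If $G$ has a bridge $e$, Fact \ref{bridge} reduces the question to deciding whether $G$ has a perfect matching containing $e$, which is polynomial; so I may assume $\delta(G)\ge 2$. For each candidate edge $uv$, I place $u\in X$, $v\in Y$, initialize $\{u\}\cup(N(u)\setminus\{v\})\subseteq X$ and $\{v\}\cup(N(v)\setminus\{u\})\subseteq Y$ (the only cross-edge incident to $u$ or $v$ is $uv$), aborting if $N(u)\cap N(v)\ne\emptyset$. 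Then I iterate Fact \ref{2neighX}: any vertex with two already-classified neighbors in $X$ (resp.\ $Y$) is added to $X$ (resp.\ $Y$), with any conflict aborting this guess.

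Here the diameter-two hypothesis does the heavy lifting: every vertex $w\notin N[u]\cup N[v]$ has at least one neighbor in $N(u)\subseteq X$ and at least one in $N(v)\subseteq Y$, so any residual ambiguous $w$ has \emph{exactly} one classified neighbor $a_w\in N(u)$ and exactly one classified neighbor $b_w\in N(v)$. Let $R$ be the set of ambiguous vertices. The key lemma I would prove is that each connected component of $G[R]$ is monochromatic: if $w\in R$ is placed in $X$, the matching property forces the unique cross-edge at $w$ to be $wb_w$, so every neighbor of $w$ in $R$ must also lie in $X$, and symmetrically for $Y$.

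It remains to choose a side for each connected component $C$ of $G[R]$ so that (i) the resulting set of cross-edges is a matching --- no $a\in N(u)$ is claimed by two $Y$-placed components, no $b\in N(v)$ by two $X$-placed components --- and (ii) the leftover vertices in $X$ and $Y$ admit perfect matchings in $G[X]$ and $G[Y]$. My approach is to encode (i) as a $2$-SAT instance with one Boolean per component (pairwise clauses from shared $a_w$ or $b_w$ values) and verify (ii) by running Edmonds' algorithm on the two surviving subgraphs. The main technical obstacle is the coupling between (i) and (ii): a $2$-SAT-satisfying assignment might fail (ii). I would handle this by merging both conditions into one perfect-matching instance on an auxiliary graph whose matchings encode component choices and internal matchings simultaneously, reducing polynomial solvability to a single application of Edmonds' algorithm. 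Since there are $O(|E|)$ candidate edges $uv$ and each is processed in polynomial time, the whole algorithm is polynomial.
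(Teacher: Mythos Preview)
Your approach and the paper's start identically---guess a cross-edge $uv$, force $N[u]\setminus\{v\}\subseteq X$ and $N[v]\setminus\{u\}\subseteq Y$, abort on conflicts---but then diverge. The paper asserts, with no further argument, that diameter two makes these two sets already partition $V$; there is no residual set $R$ in its proof, and the algorithm reduces to checking whether $E(X,Y)$ is a matching and whether the unsaturated vertices carry a perfect matching, one Edmonds call per edge. You do not take that shortcut: you allow ambiguous vertices, prove a (correct) monochromaticity lemma for the components of $G[R]$, and then face a side-assignment problem. So your route is strictly more cautious than the paper's. (It is worth noting that the paper's shortcut is not obviously sound: the Petersen graph has diameter two, yet for any edge $xy$ the set $N[x]\cup N[y]$ covers only six of its ten vertices, so $R\ne\emptyset$ there.)

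The gap in your proposal is the final step. You correctly flag that the 2-SAT constraint (i) and the perfect-matching constraint (ii) are coupled---a side-assignment satisfying (i) may fail (ii)---and you promise to resolve this via ``an auxiliary graph whose matchings encode component choices and internal matchings simultaneously''. But no construction is given, and this is not a routine gadget: the side chosen for each component of $G[R]$ determines which vertices of $N(u)\cup N(v)$ become saturated by cross-edges and therefore which residual subgraph must carry the internal perfect matching. Encoding an existential perfect matching on a vertex set that itself depends on Boolean choices is well beyond what a single Edmonds instance delivers without a bespoke reduction, and there can be exponentially many 2-SAT-feasible assignments, ruling out brute force. Until you exhibit the auxiliary graph and prove the correspondence, or else bound the number of relevant side-assignments polynomially, the polynomial-time claim remains unestablished.
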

\begin{proof}
   We can assume that $G$ has a perfect matching. Let $xy$ be an edge. We check if there exists a matching-cut $E(X,Y)$ such that $x\in X$ and $y\in Y$. From Fact \ref{neighmat}, we set $X\leftarrow X\cup N(X)\setminus \{y\}$ and $Y\leftarrow Y\cup N(Y)\setminus \{x\}$. If $X\cap Y\neq \emptyset$ or if $E(X,Y)$ is not a matching then there is no matching-cut $E(A,B)$ with $x\in A$, $y\in B$. Else, since $diam(G)=2$, we have $X,Y$ a partition of $V$. Let $W$ be the set of vertices with no endpoint in $M=E(X,Y)$. If there is a perfect matching $M'$ in $G[W]$ then $M\cup M'$ is a perfect matching-cut of $G$. Else there is no perfect matching-cut that contains the cut $M$.
\end{proof}

\begin{rmk}
The cliques are the graphs with diameter one. Hence $K_2$ is the sole graph of diameter one that has a perfect matching-cut.
\end{rmk}

\section{Planar graphs}\label{planar}
 We show the NP-completeness of the Perfect Matching-Cut problem for two subclasses of planar graphs. The first is when the degrees are restricted to be three or four, the second when the girth of the graph is five.\\

For our first proof we use the following decision problem.

\begin{center}
   \begin{boxedminipage}{.99\textwidth}
   \textsc{\sc Segment 3-Colorability} \\[2pt]
   \begin{tabular}{ r p{0.8\textwidth}}
   \textit{~~~~Instance:} & A set of vertices $V$ and three disjoint edge sets $A, B, C$ such that
   $G = (V, A\cup B\cup C)$ is a $3$-regular planar multigraph with Hamilton cycle $A\cup B$.\\
   \textit{Question:} &  Is there a color function $f : V \rightarrow \{1, 2, 3\}$ such that if $uv\in A$ then $f(u) = f(v)$ and if $uv\in C$ then $f(u)\neq f(v)$ ?
   \end{tabular}
   \end{boxedminipage}
\end{center}

\begin{theorem}[\cite{Bonsma}]
   Segment $3$-Colorability is NP-complete.
\end{theorem}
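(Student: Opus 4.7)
Membership in NP is immediate: a coloring $f:V\to\{1,2,3\}$ is a polynomial-size certificate verified in linear time by checking monochromaticity on $A$ and bichromaticity on $C$. For hardness I would reduce from \textsc{Planar $3$-Colorability} restricted to planar graphs of maximum degree four (known NP-complete), since the $C$-edges of a Segment instance are precisely the edges whose endpoints must receive distinct colors, whereas the $A$-edges play the role of identifying vertices that must share a color.

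Given an embedded planar graph $H$ with $\Delta(H)\le 4$, I would build $G=(V,A\cup B\cup C)$ as follows. For each vertex $v\in V(H)$ of degree $d_v$, introduce a \emph{segment} $S_v$: a path on $d_v$ vertices $v^{(1)},\ldots,v^{(d_v)}$ joined by $A$-edges, where $v^{(i)}$ represents the $i$-th half-edge incident with $v$ in the fixed planar rotation. For each edge $uv\in E(H)$ add exactly one $C$-edge between the corresponding half-edge copies $u^{(i)}$ and $v^{(j)}$, routed inside the face next to $uv$ to preserve planarity. Finally, fix a closed walk $W$ that visits every vertex and is drawable in the plane without crossings (for instance the face-boundary walk around a spanning tree of $H$), and install $B$-edges between consecutive segment endpoints along $W$ so that $A\cup B$ becomes a single Hamilton cycle of $G$. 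If $W$ visits a vertex $v$ more than once, duplicate $S_v$ into several copies joined in series in $W$; multigraphs being explicitly allowed by the problem removes any obstruction from the resulting parallel edges.

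Correctness is then almost formal. Contracting the $A$-edges collapses each (group of copies of) $S_v$ to a super-vertex, the $B$-edges become free, and the $C$-edges impose exactly the proper $3$-coloring constraints of $H$; so $G$ admits a Segment $3$-coloring iff $H$ is $3$-colorable. The construction is polynomial, $G$ is planar by construction, and $3$-regularity is forced by the gadget: internal segment vertices carry two $A$-edges plus one $C$-edge, and segment endpoints carry one $A$-edge, one $B$-edge and one $C$-edge.

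The hard part is ensuring that the $B$-edges, jointly with the $A$-edges, genuinely close up into one Hamilton cycle while the whole drawing stays planar. Since arbitrary planar graphs need not be traceable (Tutte), one cannot simply lift a Hamilton cycle of $H$ to $G$; the remedy is the segment-duplication trick above, together with small local ``pass-through'' gadgets at duplication points whose only admissible colorings force every copy of $S_v$ to agree with its parent. Designing those gadgets to be planar, cubic, and semantically transparent — so that coloring equivalences are not disturbed — is the delicate technical step, and is exactly where one leans on Bonsma's explicit construction in \cite{Bonsma}.
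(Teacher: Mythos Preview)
The paper does not prove this statement at all: it is quoted verbatim as a result of Bonsma \cite{Bonsma} and used as a black box for the reduction in Theorem~\ref{planarmax4}. There is therefore nothing in the paper to compare your sketch against.

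As for the sketch itself, the high-level shape --- reduce from planar $3$-coloring, blow each vertex up into an $A$-path segment carrying one $C$-edge per incidence, then thread $B$-edges to close a Hamilton cycle --- is indeed the idea behind Bonsma's construction. But you have correctly identified, and then not resolved, the only nontrivial step: guaranteeing that $A\cup B$ is a single Hamilton cycle in a planar cubic multigraph while keeping the coloring semantics intact. Your duplication-along-a-closed-walk idea needs the ``pass-through'' gadgets to force all copies of $S_v$ to agree on a color, and you do not build them; you simply point back to \cite{Bonsma}. So what you have written is an accurate outline of where the difficulty sits rather than a proof, which is consistent with how the paper itself treats the theorem --- as an imported tool, not something to be reproved here.
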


The proof of the following theorem is inspired by the one of P. Bonsma for the Matching-Cut problem for planar graphs in \cite{Bonsma}.
\begin{theorem}\label{planarmax4}
   Perfect Matching-Cut is NP-complete when restricted to planar graphs with vertex degrees $\delta(v)\in \{3,4\}$.
\end{theorem}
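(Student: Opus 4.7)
The plan is to reduce from Segment $3$-Colorability, adapting Bonsma's reduction \cite{Bonsma} for Matching-Cut on planar graphs of maximum degree four to the perfect-matching setting. Given an instance $(V,A,B,C)$ with Hamilton cycle $A\cup B$, I would construct a planar graph $G'$ with all vertex degrees in $\{3,4\}$ in which each vertex $v\in V$ is replaced by a vertex-gadget $H_v$ of bounded size with three ``ports'' (one per edge of $G$ incident to $v$), and each edge of $G$ is replaced by an edge-gadget of one of three types: an equality gadget for $uv\in A$, a neutral gadget for $uv\in B$, and an inequality gadget for $uv\in C$.

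The vertex-gadget $H_v$ will contain several immune subgraphs (triangles or copies of $K_{2,3}$, invoking Fact \ref{discclique}) so that, by Fact \ref{clique}, every vertex of $H_v$ lies on the same side of the partition $(X,Y)$ induced by any perfect matching-cut of $G'$. The colour of $v$ will be encoded by an internal choice inside $H_v$ visible through which of its port-edges are selected for the cut: three distinct internal configurations of $H_v$, each extending to a perfect matching of $H_v$, will correspond to the three possible colours. The equality gadget placed between two vertex-gadgets $H_u,H_v$ for $uv\in A$ will be a small planar piece admitting a perfect matching only if the ports of $H_u$ and $H_v$ encode the same colour; the inequality gadget for $uv\in C$ admits one only if they encode different colours; the neutral gadget for $uv\in B$ allows both options. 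All gadgets are designed to fit inside the planar embedding inherited from $G$ and to respect the degree bound.

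The verification has three ingredients. The construction is polynomial in $|V|$ because the gadgets have constant size, and the planarity and degree conditions can be checked gadget by gadget. From a valid $3$-colouring $f$ of $G$ one builds a perfect matching-cut of $G'$ by picking inside every $H_v$ the internal perfect matching corresponding to the colour $f(v)$ and inside every edge-gadget the perfect matching corresponding to the (in)equality of its endpoints' colours; the set of edges that cross the two sides of the induced partition forms a matching by construction and disconnects $G'$. Conversely, given any perfect matching-cut $M$ of $G'$, Fact \ref{clique} applied to the immune pieces inside each $H_v$ forces the whole of $H_v$ onto one side of the partition, and inspection of the finite state-space of every gadget extracts a colour for each $v\in V$ that satisfies every constraint imposed by $A$ and $C$, giving a valid $3$-colouring of $G$.

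The main obstacle is the design of the vertex- and edge-gadgets themselves. Compared with Bonsma's reduction for Matching-Cut, the new requirement is that in every ``legal'' configuration the residual graph left inside the gadget after fixing its interface must admit a perfect matching on its internal vertices, while every ``illegal'' configuration must either violate the matching-cut property at the interface or leave some internal vertex unsaturated. The subtle point is that a two-sided cut must nevertheless display three distinguishable colour-states at each vertex-gadget: the encoding must rely on which of the three ports of $H_v$ contribute to the cut, and the gadget has to be engineered so that a perfect matching of its interior exists in exactly those three configurations and in no other. Once bounded-size gadgets with this property are written down, the combinatorial check over their finite state-spaces is routine and the NP-completeness, together with membership of PMC in NP noted earlier, yields the theorem.
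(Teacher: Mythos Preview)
Your reduction from Segment $3$-Colorability is the right starting point and matches the paper's choice, but there is a genuine gap in the mechanism you propose for encoding three colours per vertex. You stipulate that each vertex-gadget $H_v$ is immune, so that all of $H_v$ lies on a single side of the bipartition $(X,Y)$, and then hope to read off a colour from ``which of its port-edges are selected for the cut''. But once $H_v$ is entirely in $X$, whether a given port-edge lies in the cut is determined solely by whether the adjacent gadget lies in $Y$; it is not an internal choice of $H_v$ at all. A two-sided partition of immune blobs gives at most a $2$-colouring of the blob-graph, and you have no mechanism to extract three distinguishable states per vertex from it. Worse, since $A\cup B$ is a Hamilton cycle, if the $A$- and $B$-gadgets are also immune the entire cycle of gadgets is forced to one side and there is no cut at all.

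The paper resolves this by doing the opposite: the vertex-gadget $g_v$ is \emph{not} immune. It is engineered from triangles and $C_4$'s so that every matching-cut of $G'$ must traverse $g_v$, and there are exactly three admissible routes through it, indexed by which pair $l_{v,i}l'_{v,i}$, $r_{v,i}r'_{v,i}$ is severed; this index $i\in\{1,2,3\}$ is the colour of $v$. The Hamilton cycle $A\cup B$ forces the cut to pass through every $g_v$ and every $a_e,b_e$; the $a_e$-gadget transmits the same index across (equality), the $b_e$-gadget allows any pair of indices, and each $C$-edge is realised by six length-two paths whose middle vertices $d_i,d'_i$ cannot be saturated in a perfect matching if both endpoints carry colour $i$ (inequality). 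The perfect-matching condition is then verified cut-by-cut on each gadget. To make your outline work you would have to abandon the immunity of $H_v$ and adopt this ``cut traverses the gadget in one of three ways'' mechanism, which is the key idea you are missing.
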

\begin{proof}
Let $(V, A, B, C)$ be an instance of Segment $3$-Colorability with $G = (V, A \cup B\cup C)$. We will transform, in polynomial time, this instance into an instance $G'$ of Perfect Matching-Cut where $G'$ is planar with maximum degree four. Then we will transform $G'$ into a planar graph $G''$ with vertex degrees three or four.

Every vertex $v$ of $G$ will be associated with a connected component $g_v$ in $G'$ as shown in Figure \ref{gadgetvertex}. We often call such connected component a vertex $g_v$ of $G'$. If two vertices $u$ and $v$ of $G$ are joined by an $A$-edge $e$ then the vertices $g_u$ and $g_v$ will be linked using the connected component $a_e$ as shown in Figure \ref{gadgetedgeA}. We often call such connected component an edge $a_e$ of $G'$. If two vertices $u$ and $v$ of $G$ are joined by a $B$-edge $e$ then the vertices $g_u$ and $g_v$ will be linked using the connected component $b_e$ as shown in Figure \ref{gadgetedgeB}. We often call such connected component an edge $b_e$ of $G'$. If two vertices $u$ and $v$ of $G$ are joined by a $C$-edge $e$ then the vertices $g_u$ and $g_v$ will be linked by six disjoint paths of size three ($c_{u,i}-d_i-c_{v,i}$ and $c_{u,i}'-d_i'-c_{v,i}'$, $i=1,2,3$) as shown in Figure \ref{gadgetedgeC}. We say that $g_u$ and $g_v$ are connected by a $C$-edge.

\begin{figure}[H]
   \centering
   \begin{subfigure}{.49\textwidth}
      \centering
      \includegraphics[width=1\linewidth]{}
      \caption{}
      \label{gadgetvertex1}
   \end{subfigure}\hfill%
   \begin{subfigure}{.49\textwidth}
      \centering
      \includegraphics[width=1\linewidth]{}
      \caption{}
      \label{gadgetvertex2}
   \end{subfigure}
   \bigskip
   \begin{subfigure}{.49\textwidth}
      \centering
      \includegraphics[width=1\linewidth]{}
      \caption{}
      \label{gadgetvertex3}
   \end{subfigure}\hfill%
   \begin{subfigure}{.49\textwidth}
      \begin{center}
         \includegraphics[width=1\linewidth]{}
      \end{center}
      \caption{}
      \label{gadgetvertex4}
   \end{subfigure}
   \caption{Four orientations of a vertex $g_v$ (a) tail of inside edge, (b) head of inside edge, (c) tail of outside edge and (d) head of outside edge.}
   \label{gadgetvertex}
\end{figure}

\begin{figure}
   \centering
   \begin{subfigure}{.29\textwidth}
      \centering
      \includegraphics[width=1\linewidth]{gadget_A}
      \caption{}
      \label{gadgetedgeA}
   \end{subfigure}\hfill%
   \begin{subfigure}{.67\textwidth}
      \centering
      \includegraphics[width=1\linewidth]{}
      \caption{}
      \label{gadgetedgeB}
   \end{subfigure}

   \bigskip
   \begin{subfigure}{.5\textwidth}
      \centering
      \includegraphics[width=1\linewidth]{}
      \caption{}
      \label{gadgetedgeC}
   \end{subfigure}
   \caption{The three associated connection (a) the  component $a_e$ associated with an $A$-edge $e$ of $G$, (b) the component $b_e$ associated with a $B$-edge $e$ of $G$ and (c) two component $g_u,g_v$ where $uv\in C$}
   \label{gadgetedges}
\end{figure}

Consider an embedding of $G$. Orient all edges of the Hamilton cycle $A\cup B$ counterclockwise with regard to the embedding. We say that a vertex $u$ precedes $v$ if there is an edge from $u$ to $v$ in the choosen embedding. Orient all edges of $C$ arbitrarily. Since the edge set $A\cup B$ gives a Hamilton cycle, in the embedding of $G$, this Hamilton cycle divides the plane into two regions. Hence we can divide the edges of $C$ into two categories using this Hamilton cycle: inside edges and outside edges. Because every vertex $v$ of $G$ is incident with exactly one edge $e$ of $C$, we can use this to define four variants of a vertex $g_v$:

\begin{enumerate}
   \item If $e$ is an inside edge and $v$ is incident with the tail of $e$, introduce $g_v$ as shown in Figure \ref{gadgetvertex1};
   \item If $e$ is an inside edge and $v$ is incident with the head of $e$, introduce $g_v$ as shown in Figure \ref{gadgetvertex2};
   \item If $e$ is an outside edge and $v$ is incident with the tail of $e$, introduce $g_v$ as shown in Figure \ref{gadgetvertex3};
   \item If $e$ is an outside edge and $v$ is incident with the head of $e$, introduce $g_v$ as shown in Figure \ref{gadgetvertex4}.
\end{enumerate}

For each component used to construct $G'$, we will show that the only possible matching-cuts are those depicted in Figures \ref{gadgetvertexci}, \ref{gadgetedgeAci} and \ref{gadgetedgeBcij}. Note that there exists two special cases not drawn in the figures, concerning the component $g_v$, but these cases are impossible when the components will be linked together in $G'$. We will highlight these two cases later in the proof.
First, for each component, we will focus on the matching-cuts such that the number of edges is minimal when one edge is forced to be in the cut. The minimal matching-cuts considered are those that force exactly one edge at the left or at the right extremity of the component representation (the edges between the $l$'s vertices or between the $r$'s vertices). We will show that the bold edges crossed by the dotted lines in Figure \ref{gadgetvertexci} are mandatory to have a matching-cut. If not specified, when we refer to the matching-cut of a component we refer only to these mandatory edges. Then for each configuration, we will derive a perfect matching-cut that is represented by all the other bold edges of the figures. Note that the remaining edges that form the perfect matching-cut will not add any cuts other than the initial one.

Before showing that the only matching-cuts through the components are those outlined in Figure \ref{gadgetvertexci}, we point out the matching-cut properties of some of the induced subgraphs. Recall that a triangle is immune, and therefore no edge of a triangle belongs to a minimal matching-cut. Consider now a $C_4$. Taking an edge of the $C_4$ into a matching-cut, there is exactly one other edge of $C_4$ into the matching-cut, that is, the one that is not incident with the former. These properties able us to control the matching-cut from the left to the right for each component. Moreover, they avoid cuts to start from the bottom or the top. Hence we only consider the edges at the left or at the right of the component when we initialize a minimal matching-cut.

We focus on the vertex $g_v$ and the three matching-cuts shown in Figure \ref{gadgetvertexci}. We show that they are the only possible matching-cuts. Note that the matching-cuts outlined by \ref{gadgetvertexc1} and \ref{gadgetvertexc2} could be modify so they go through $r_{v,1}'r_{v,2}$ or $l_{v,2}'l_{v,3}$, respectively. Yet, note that every matching-cut of $g_v$ containing $r_{v,1}'r_{v,2}$ or $l_{v,2}'l_{v,3}$, is not valid when $g_v$ is connected to component $a_e$ or $b_e$, since there are blocked by a triangle. There exist some cycles of length more than four in $g_v$. Starting from an edge of one of them, it is easy to check that there is no matching-cut except these depicted in Figure \ref{gadgetvertexci}  (in any case we end up in a triangle).
Hence we focus only on  $C_3$ and $C_4$. Then taking an edge to initialize a cut, it is easy to construct a matching-cut since most of the configurations will end up in a triangle, and the path we build is constrained by a sequence of $C_4$'s. The three ways to build a matching-cut are outlined by Figure \ref{gadgetvertexci}. Now it remains to construct a perfect matching-cut. Consider a matching-cut outlined in Figures \ref{gadgetvertexci}. By taking the remaining bold edges,  we can complete the matching-cut so that it becomes a perfect matching-cut.

When $l_{v,i}l_{v,i}'$ is in a matching-cut of $g_v$, we say that $g_v$ is {\it cut} by $i$ $(i=1,2,3)$. Every choice of $i,\, i\in\{i=1,2,3\}$, corresponds to one of the three colors of the vertex $v$ in the original graph $G$. Observe that in this case $r_{v,i}r_{v,i}'$ is also in the matching-cut of $g_v$. Moreover it is relevant to note that $c_{v,i}c_{v,i}'$ is also in the matching-cut whereas $c_{v,j}c_{v,j}'$ is not $(i\neq j)$. \\

\begin{figure}[H]
   \centering
   \begin{subfigure}{.49\textwidth}
     \centering
     \includegraphics[width=1\linewidth]{}
      \caption{}
      \label{gadgetvertexc1}
   \end{subfigure}\hfill%
   \begin{subfigure}{.49\textwidth}
     \centering
     \includegraphics[width=1\linewidth]{}
      \caption{}
      \label{gadgetvertexc2}
   \end{subfigure}

   \bigskip
   \begin{subfigure}{.49\textwidth}
      \centering
      \includegraphics[width=1\linewidth]{}
      \caption{}
      \label{gadgetvertexc3}
   \end{subfigure}
   \caption{The three  matching-cuts through a vertex $g_v$ outlined by the dotted lines. The bold edges represent the corresponding perfect matching-cut.}
   \label{gadgetvertexci}
\end{figure}

Now we focus on the edges $A,B,C$. We study how they are connected to the vertices of $G'$.
Each edge $e$ of $A$ is replaced by an edge $a_e$ as shown in Figure \ref{gadgetedgeA}. Three matching-cuts are outlined by the Figure \ref{gadgetedgeAci}. We show that they are the only three matching-cuts. Note that there is no matching-cut passing through $l_i'l_{i+1}$ or $r_i'r_{i+1}$, $(i=1,2)$ since we would end up with a triangle. Moreover, one can check that there is no matching-cut passing through the upper or the lower side of a component $a_e$ because of the triangles. Last, consider the $C_8$ at the center of the component. Because of the positions of the four triangles and the four squares that surround $C_8$, a matching-cut goes entirely either horizontally or vertically through it.

As for a vertex $g_v$, if an edge $l_1l_1'$ is in a matching-cut then $r_1r_1'$ is in the  matching-cut. Hence we obtain the following property:

\begin{prop}\label{propedgeA}
   If two vertices $g_u$ and $g_v$ are connected by an edge $a_e$, then for any matching-cut we have that $g_u$ is cut by $i$ if and only if $g_v$ is cut by $i$ $(i=1,2,3)$.
\end{prop}

Note that to achieve this property, the vertex $g_v$ would be sufficient. Yet, by the way we link the components in $G'$, it would have vertices of degree greater than four. Also, we could merge $g_v$ with $a_e$ to form a unique component, but for the readability of the proof, we prefer to minimize the size of our components.

Now it remains to construct a perfect matching-cut of an edge $a_e$. The dotted lines and the bold edges depicted in Figure \ref{gadgetedgeAci} show how to get three perfect matching-cuts of $a_e$.

\begin{figure}[H]
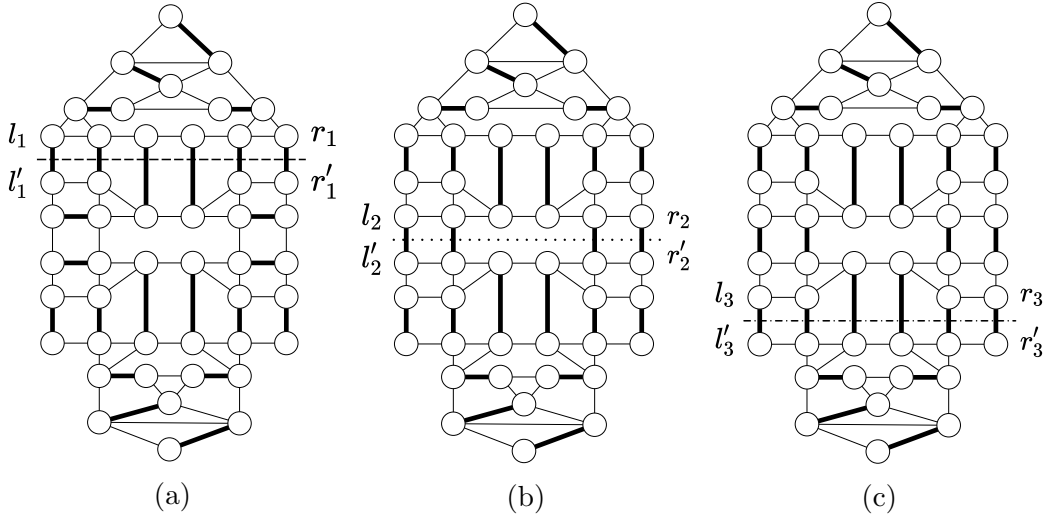

   \centering
   \begin{subfigure}{.29\textwidth}
     \centering
     \includegraphics[width=1\linewidth]{gadget_A_c1}
      \caption{}
      \label{gadgetedgeAc1}
   \end{subfigure}\hspace{0.02\textwidth}%
   \begin{subfigure}{.29\textwidth}
     \centering
     \includegraphics[width=1\linewidth]{gadget_A_c2}
      \caption{}
      \label{gadgetedgeAc2}
   \end{subfigure}\hspace{0.02\textwidth}%
   \begin{subfigure}{.29\textwidth}
      \centering
      \includegraphics[width=1\linewidth]{gadget_A_c3}
      \caption{}
      \label{gadgetedgeAc3}
   \end{subfigure}
   \caption{The three matching-cuts through an edge $a_e$ outlined by the dotted lines. The bold edges represent the corresponding perfect matching-cut.}
   \label{gadgetedgeAci}
\end{figure}

Each edge $e$ of $B$ is replaced by an edge $b_e$ as shown in Figure \ref{gadgetedgeB} and nine matching-cuts are outlined in the Figure \ref{gadgetedgeBcij}. We show that another matching-cut is not possible.
First, for each of the three figures exposed in \ref{gadgetedgeBcij}, consider $H$ the subgraph induced by the vertices not covered by a bold edge. Note that $H$ can be partitioned into two isomorphic subgraphs $C$ and $C'$.
We focus on the subgraph $C$ which is displayed in Figure \ref{gadgetedgeBci}. Suppose that there is no cut passing through the edges of the top border or of the lower border of $C$. With similar arguments used for the cuts of $g_v$ and $a_e$, one can check that the only three  cuts are outlined in Figures \ref{gadgetvertexc1}, \ref{gadgetvertexc2} and \ref{gadgetvertexc3}. Also, as for $a_e$, there is no matching-cut containing $l_i'l_{i+1}$ or $r_i'r_{i+1}$, $(i=1,2)$ because of the triangles.
Now consider the entire graph $b_e$. One can check that there is no matching-cut passing through the upper side or the lower side of the graph, because of the triangles. Hence, no matching-cut can goes through $C$ or $C'$ by passing through their bottom or their top. Note that each of the three cuts of $C$  is compatible with the three cuts of $C'$. So there are nine matching-cuts which are outlined in Figure \ref{gadgetedgeBcij}.

If an edge $l_il_i'$, $(i=1,2,3)$ is in a matching-cut then $r_jr_j'$, $(j=1,2,3)$ is in the matching-cut. Hence we obtain the following property:

\begin{prop}\label{propedgeB}
   If two vertices $g_u$ and $g_v$ are connected by an edge $b_e$ then for every matching-cut we have that $g_u$ is cut if and only if $g_v$ is cut. Every combination of cuts through $g_u$ and $g_v$ is possible.
\end{prop}

\begin{figure}[H]
   \centering
   \begin{subfigure}{.49\textwidth}
     \centering
     \includegraphics[width=1\linewidth]{}
      \caption{}
      \label{gadgetedgeBc1j}
   \end{subfigure}\hfill%
   \begin{subfigure}{.49\textwidth}
     \centering
     \includegraphics[width=1\linewidth]{}
      \caption{}
      \label{gadgetedgeBc2j}
   \end{subfigure}

   \bigskip
   \begin{subfigure}{.49\textwidth}
      \centering
      \includegraphics[width=1\linewidth]{}
      \caption{}
      \label{gadgetedgeBc3j}
   \end{subfigure}
   \caption{The nine  matching-cuts through an edge $b_e$ outlined by the dotted lines. The bold edges represent a part of the corresponding perfect matching-cut.}
   \label{gadgetedgeBcij}
\end{figure}

\begin{figure}[H]
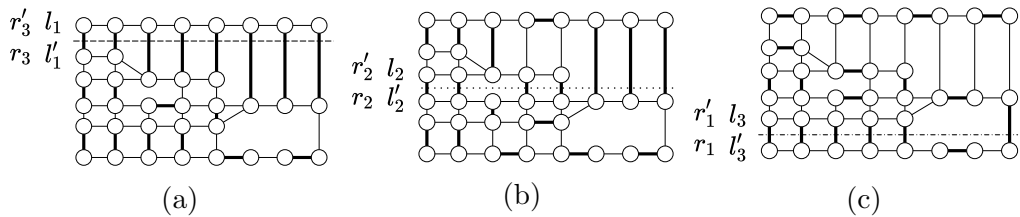

   \centering
   \begin{subfigure}{.3\textwidth}
     \centering
     \includegraphics[width=1\linewidth]{gadget_B_c1}
      \caption{}
      \label{gadgetedgeBc1}
   \end{subfigure}%
   \begin{subfigure}{.3\textwidth}
     \centering
     \includegraphics[width=1\linewidth]{gadget_B_c2}
      \caption{}
      \label{gadgetedgeBc2}
   \end{subfigure}%
   \begin{subfigure}{.3\textwidth}
      \centering
      \includegraphics[width=1\linewidth]{gadget_B_c3}
      \caption{}
      \label{gadgetedgeBc3}
   \end{subfigure}
   \caption{The three matching-cuts of the component $C$ of an edge $b_e$ outlined by the dotted lines. The bold edges represent the corresponding perfect matching-cut.}
   \label{gadgetedgeBci}
\end{figure}

Now it remains to construct a perfect matching-cut of an edge $b_e$. For every graph of Figure \ref{gadgetedgeBcij}, first we take the bold edges. Then we consider one of the matching-cut. From this cut, we take the edges that are cut in the two subgraphs $C$ and $C'$ in Figure \ref{gadgetedgeBci}. Eventually, we take the rest of the bold edges that are not cut. So we have a perfect matching-cut of $b_e$. \\

We show how two vertices $g_u$ and $g_v$ are connected in $G'$ when there exists an edge $uv\in A\cup B$ where $u$ precedes $v$. We merge each vertex $r_{u,i}$, $r_{u,i}'$ with the corresponding vertex $l_i$, $l_i'$. Then we merge each vertex $l_{v,i},l_{v,i}'$ with the corresponding vertex $r_i,r_i'$. Note that both operations do not create a vertex of degree more than four. Moreover, recall that there is no matching-cut passing through $l_i'l_{i+1}$ or $r_i'r_{i+1}$, $(i=1,2)$ for the edges $a_e$ and $b_e$. Hence we do not have to consider the matching-cut ofs $g_u$ that go through $l_{u,i}'l_{u,i+1}$ or $r_{u,i}'r_{u,i+1}$, $i=1,2$. Thus we consider the three  matching-cuts of $g_u$ depicted in Figure \ref{gadgetvertexci}. \\

Last we show how two vertices $g_u$ and $g_v$ are connected in $G'$ when there exists an edge $uv\in C$ where $u$ precedes $v$. We link each $c_{u,i}$ to $c_{v,i}$ and each $c_{u,i}'$ to $c_{v,i}'$ with the respective two-path $c_{u,i}-d_i-c_{v,i}$ and $c_{u,i}'-d_i'-c_{v,i}'$, $(i=1,2,3)$ has shown in Figure \ref{gadgetedgeC}. Note that this construction does not create a vertex with a degree more than four. We prove the following:

\begin{prop}\label{propedgeC}
   If two vertices $g_u$ and $g_v$ are connected by a $C$-edge then for every matching-cut we have that  if $g_u$ is cut by $i$ then $g_v$ is cut by $j$, $i\neq j$.
\end{prop}
\begin{proof}
   We know that if a vertex $g_u$ is cut by $i$ then $c_{u,i}$ is in the matching-cut of $g_u$. Suppose now that $g_v$ is cut by $i$. Thus $c_{v,i}$ is in the matching-cut of $g_v$. Since $c_{u,i}$ is connected by a two-path, there exists $d_i$ with degree two that is not in a matching, but it has both neighbors in a matching. Hence there is no perfect matching-cut such that $g_u$ and $g_v$ are cut by $i$.
\end{proof}

Let $G'$ be the graph that is constructed by introducing a vertex $g_u$ for each vertex $u$ of $G$ and connecting them as described above. Using the observed properties of these connections, we prove the following:

\begin{prop}
   Every matching-cut in $G'$ corresponds to a proper segment 3-coloring of $G$ and vice versa.
\end{prop}
\begin{proof}
   Because $A\cup B$ is a Hamilton cycle in $G$, and the components have no matching-cuts other than the indicated, by Property \ref{propedgeA} and Property \ref{propedgeB}, we conclude that if $G'$ has a matching-cut, this matching-cut disconnects every component associated with the vertices of $G$, and every component associated with the edges of $A\cup B$. Note that since the $C$-edges cannot disconnect the graph $G'$ then for every $C$-edge linking two vertices $g_u$ and $g_v$, a minimal matching-cut cannot contain some of the edges  $c_{u,i}-d_i-c_{v,i}$, $c_{u,i}'-d_i'-c_{v,i}',i=1,2,3$.
\end{proof}

We focus now on the perfect matching-cuts of $G'$ and demonstrate the following:

\begin{prop}
   Given a matching-cut of $G'$ we can construct a perfect matching-cut of $G'$.
\end{prop}
\begin{proof}
   Let $M$ be a matching-cut of a vertex $g_u$, and let an edge $a_e$ or $b_e$. We  show how we  construct a perfect matching-cut $M'$ such that $M\subset M'$. We start by taking the only necessary vertices for the cut $i$ to be minimal. Then for the vertices $l_{u,j}$, $l_{u,j}'$ $(i\neq j)$ that are merged with an edge $a_e$ or $b_e$, we consider the perfect matching-cut of $a_e$ or $b_e$ outlined in the Figures \ref{gadgetedgeAci}, \ref{gadgetedgeBcij} and \ref{gadgetedgeBci}. Next, we consider $g_v$ such that $g_u$ and $g_v$ are connected by a $C$-edge. We can assume that $u$ precedes $v$. Since $g_v$ is cut by $j$ (recall that $i\neq j$), we take $c_{u,j}d_j$, $c_{u,j}'d_j'$, $c_{v,i}d_i$, $c_{v,i}'d_i',c_{u,k}d_k$, $c_{u,k}'d_k'$ $(i\neq j\neq k)$ in the matching-cut. Actually, for every vertex $g_u$, it is clear from Figure \ref{gadgetvertexci} that we take every vertex not covered by the matching-cut. So each $g_u$, $a_e$ and $b_e$ is cut by a perfect matching-cut. Moreover all vertices associated with a $C$-edge are also in a matching. Hence we have a perfect matching-cut of $G'$.
\end{proof}

At last, we prove is the following.
\begin{prop}
   $G'$ is planar and $\Delta(G')=4$.
\end{prop}
\begin{proof}
   Clearly all the components replacing the vertices and the edges are planar with maximum degree four, see Figure \ref{gadgetvertex} and \ref{gadgetedges}. Moreover, we have outlined that the way we link the components does not produce a vertex with a degree more than four. Hence $\Delta(G')=4$. Because $A\cup B$ is a Hamilton cycle in $G$, we can one by one make the links between each vertex $g_u$ without destroying the planarity. It remains the $C$-edge links. Given a counterclockwise orientation of the Hamilton cycle $A\cup B$ it is clear that no $C$-edges overlap in the graph $G$, recall that they are either inside or outside the hamilton cycle. Let  $g_u$ and $g_v$ be two vertices connected by a $C$-edge. If they are connected by an inside edge, we can see from Figure \ref{gadgetvertex1} and \ref{gadgetvertex2} that the three paths connections between them does not overlap. If $g_u$ and $g_v$ are connected by an outside edge, the same property can be checked, see Figure \ref{gadgetvertex3} and \ref{gadgetvertex4}. Hence the graph $G'$ is planar and has degree maximum four.
\end{proof}

The last property completes the proof, that is, $G'$ has a perfect matching-cut if and only if $G$ is segment $3$-colorable.
\begin{figure}[htbp]
   \begin{center}
   \includegraphics[width=15cm, height=3.5cm, keepaspectratio=true]{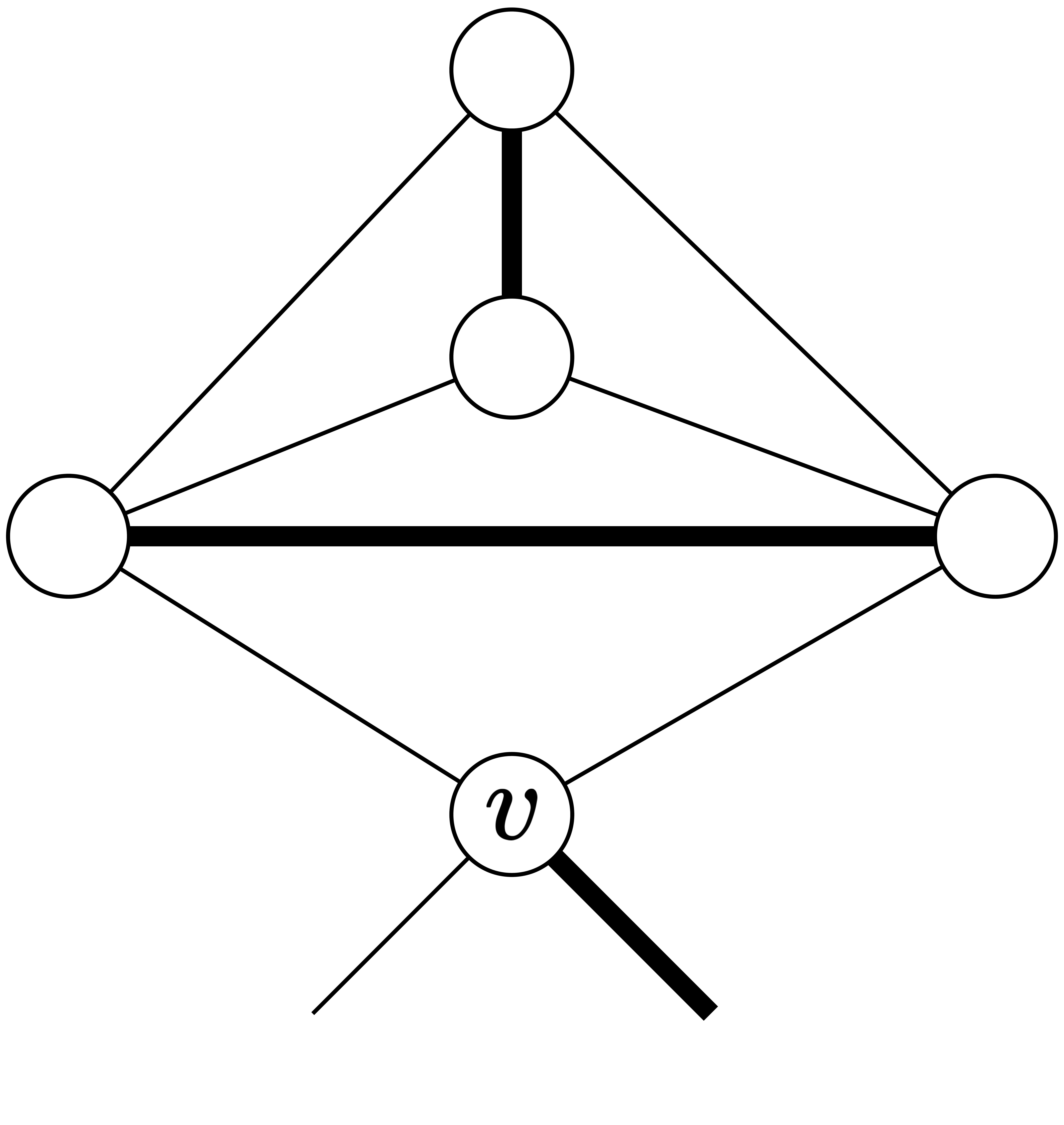}
   \end{center}
   \caption{How to transform $v$ a vertex of degree two in $G'$ by adding four vertices of degree three or four.}
   \label{deg2}
\end{figure}

To end the proof we show how we can transform $G'$ into $G''$ a planar graph with vertex degrees $d(v)\in\{3,4\}$. We note that in $G'$ the vertex degrees are $d(v)\in\{2,3,4\}$. To any vertex $v,\, d(v)=2,$ we connect four vertices as shown by Figure \ref{deg2}. Note that the graph induced by the four vertices has a perfect matching. So $G''$ is planar, $\delta(G'')=3$, and $\Delta(G'')=4$. Since the subgraph induced by $v$ and the four additional vertices is immune the result follows.
\end{proof}

We can observe in the proof of Theorem \ref{planarmax4} that the triangles are of great importance since they are immune. Hence, the question arises whether the Perfect Matching-Cut problem is still NP-complete for planar graphs with large girth.

For the Matching-Cut problem, Moshi \cite{Moshi} showed that every edge $uv$ could be replaced by a cycle of length four ($C_4=u-u_1-v-v_1-u$) so that it gives a bipartite graph instance that is equivalent to the original. Hence the Matching-Cut problem remains NP-complete for bipartite planar graphs with maximum degree eigth. Unfortunately it is clear that this construction does not work for the Perfect Matching-Cut problem and it doesn't seem feasible to achieve similar result with such construction. Bonsma \cite{Bonsma} showed that the Matching-Cut problem is NP-complete for planar graphs with girth five. For planar graphs with girth at least six, Bonsma et al. \cite{BonsmaFarley} showed that  all these graphs have a Matching-Cut. Beside the planar bipartite graphs, we were able to obtain a similar result by replacing each edge of a graph with the graph shown in Figure \ref{planargirth5}. We obtain the following.

\begin{theorem}
   Perfect Matching-Cut is NP-complete when restricted to planar graphs with girth five.
\end{theorem}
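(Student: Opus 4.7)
The plan is to reduce from Perfect Matching-Cut on planar graphs with vertex degrees in $\{3,4\}$, whose NP-completeness was established in Theorem~\ref{planarmax4}. Given such an input graph $G$, we construct a planar graph $G^\star$ of girth $5$ by replacing every edge $uv\in E(G)$ with a copy of the edge-gadget $H_{uv}$ depicted in Figure~\ref{planargirth5}, where the two attachment vertices of $H_{uv}$ are identified with $u$ and $v$. We then show that $G^\star$ has a perfect matching-cut if and only if $G$ does.

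First I would verify the structural properties of the construction: (i) $H_{uv}$ itself has girth at least $5$, (ii) the only cycles passing through $u$ or $v$ in $G^\star$ either lie entirely inside a single gadget or traverse several gadgets, and in the latter case the distance contributed by each gadget is large enough that no cycle of length $\le 4$ arises, and (iii) planarity is preserved because each $H_{uv}$ can be drawn inside a thin tube along the edge $uv$ in a planar embedding of $G$. This gives that $G^\star$ is planar and has girth exactly $5$.

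The heart of the proof is establishing the two key properties of the gadget with respect to perfect matchings. Specifically, I would show: (a) $H_{uv}$ admits a perfect matching of its \emph{internal} vertices (those distinct from $u,v$) that contains no edge-cut separating the $u$-side from the $v$-side, and also (b) $H_{uv}$ admits a perfect matching of its internal vertices that does contain a matching-cut separating $u$ from $v$ inside the gadget; furthermore, in any perfect matching of $G^\star$ restricted to $H_{uv}$, exactly one of these two behaviours occurs, and in the first case all internal vertices of $H_{uv}$ end up on the same side of any partition $(X,V\setminus X)$ realizing a perfect matching-cut of $G^\star$. In other words, the gadget faithfully simulates an edge: choosing the second behaviour for $H_{uv}$ in $G^\star$ corresponds to putting $uv$ in the matching-cut of $G$, while choosing the first corresponds to leaving $uv$ outside the cut. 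Fact~\ref{clique} together with the analogous ``perfect-immunity on each side'' property of the gadget is what forces this dichotomy.

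Given these properties, the equivalence is routine. From a perfect matching-cut $M$ of $G$ with partition $(X,Y)$, I would construct $M^\star$ by choosing the cutting internal matching in each gadget $H_{uv}$ with $uv\in M$ and the non-cutting internal matching in each remaining gadget; the resulting $M^\star$ is a perfect matching of $G^\star$, and the induced partition of $V(G^\star)$ extends $(X,Y)$ and still has $M^\star$-edges as its only crossing edges. Conversely, given a perfect matching-cut $M^\star$ of $G^\star$ with partition $(X^\star,Y^\star)$, I would let $X=X^\star\cap V(G)$, $Y=Y^\star\cap V(G)$ and use the dichotomy above to read off which original edges are cut; the resulting $M\subseteq E(G)$ is then a perfect matching-cut of $G$. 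The main obstacle I anticipate is the verification in (a)--(b): showing that the forbidden ``mixed'' perfect matchings of $H_{uv}$ (those that partially cut and partially do not) are truly impossible, which typically requires a careful case analysis of all perfect matchings of the gadget, exploiting small perfectly immune subgraphs inside $H_{uv}$ much as in the proof of Theorem~\ref{planarmax4}. Once that case analysis is complete, NP-membership is immediate and the reduction is polynomial, so the result follows.
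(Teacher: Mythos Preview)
Your overall strategy---reduce from planar Perfect Matching-Cut by replacing every edge with a fixed planar girth-$5$ gadget attached at $u$ and $v$---is exactly the paper's approach. The gap is in how you specify the two gadget behaviours.

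You describe both (a) and (b) as perfect matchings of the \emph{internal} vertices of $H_{uv}$, i.e.\ matchings that leave $u$ and $v$ uncovered inside that gadget. But in $G^\star$ every edge of $G$ has been replaced, so each original vertex $u$ has neighbours only among internal gadget vertices; hence in any perfect matching of $G^\star$ the vertex $u$ must be matched \emph{inside} exactly one of the gadgets incident to it. Consequently your construction of $M^\star$ (``choose the cutting internal matching when $uv\in M$, the non-cutting internal matching otherwise'') leaves every vertex of $V(G)$ unmatched, so $M^\star$ is not a perfect matching at all, and your claimed dichotomy (``in any perfect matching of $G^\star$ restricted to $H_{uv}$, exactly one of these two behaviours occurs'') is impossible as stated.

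The right dichotomy is not ``internal-only, non-cutting'' versus ``internal-only, cutting'' but ``internal-only'' versus ``full'' (covering $u$ and $v$ as well). Parity forces this cleanly: the paper's gadget $H$ has an even number of vertices in total, so in any perfect matching of $G^\star$ either both of $u,v$ are matched inside $H_{uv}$ or neither is. The gadget is engineered so that the unique perfect matching of $H\setminus\{u,v\}$ does \emph{not} separate $u$ from $v$, while every perfect matching of all of $H$ \emph{does} separate them. The correspondence is then $uv\in M$ in $G$ $\Longleftrightarrow$ $u,v$ are matched inside $H_{uv}$ in $G^\star$, which makes each original vertex matched in exactly one gadget (the one for its $M$-edge). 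With this correction your forward and backward arguments go through essentially as you wrote them.
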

\begin{proof}
   From $G=(V,E)$ an instance of Perfect Matching-Cut where $G$ is planar we built a  planar graph $G'=(V',E')$ with girth five. Replace every edge $uv$ of $G$ by the gadget $H$ shown in Figure \ref{planargirth5_1}. Note that since the distance in $H$ between $u$ and $v$ is three then $g(G')=5$. From $M\subset E$ a perfect matching-cut of $G$ corresponds $M'\subset E'$ as follows. If $uv\in M$ then we take the bold edges shown in Figure \ref{planargirth5_2} in $M'$.  Clearly, this is the unique maximum matching when $u$ and $v$ are matched outside $H$. Moreover this matching does not disconnect $H$. Otherwise, when $uv\not\in M$, up to symmetry,  we take one of the two matchings represented by the bold edges in Figure \ref{planargirth5_3} and \ref{planargirth5_4} which both form a perfect matching-cut of $H$ that disconnect $u$ from $v$. One can check that there is no other perfect matching of $H$ when $u$ and $v$ are covered inside $H$.
 Also note that since $H$ has an even number of vertices there is no perfect matching-cut  where $u$ is matched inside $H$ and $v$ is matched outside $H$.
  Hence $M'$ is a perfect matching-cut of $G'$. Conversely, from the properties of  the perfect matchings of $H$ we described just below, if $G'$ has a perfect matching-cut $M'$ then there exists $M$ a perfect matching-cut of $G$.
\end{proof}

\begin{figure}[H]
   \centering
   \begin{subfigure}{.3\textwidth}
     \centering
     \includegraphics[width=1\linewidth]{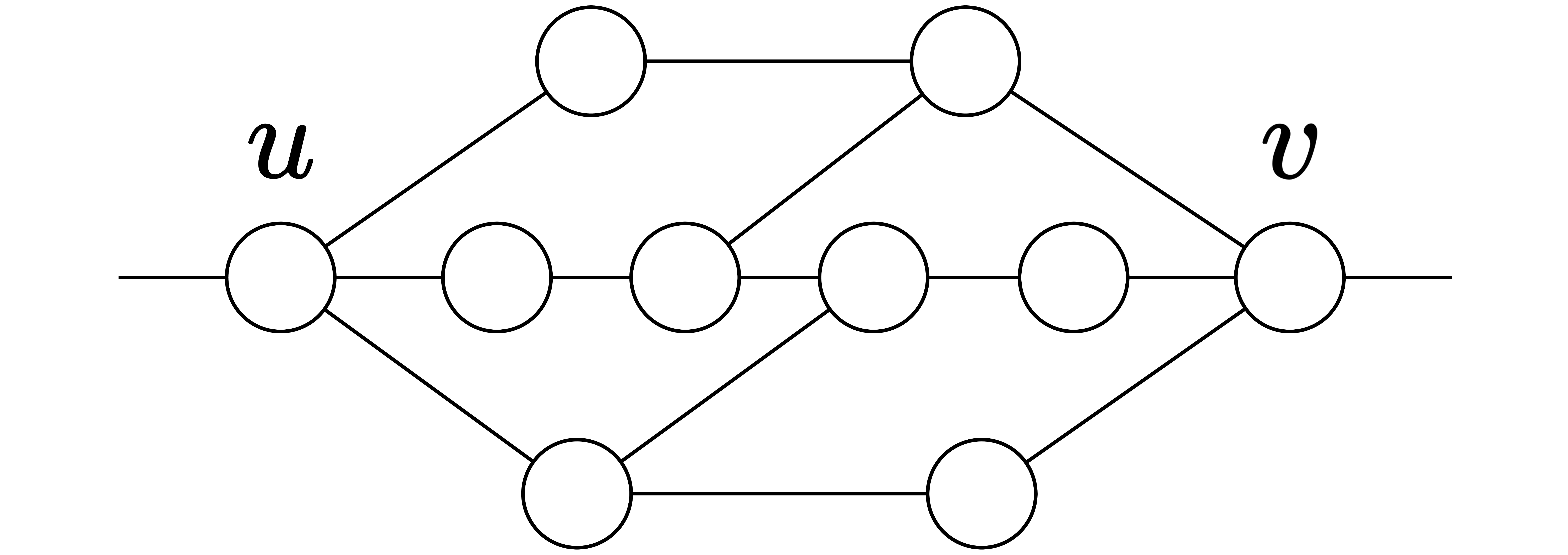}
      \caption{}
      \label{planargirth5_1}
   \end{subfigure}%
   \begin{subfigure}{.3\textwidth}
     \centering
     \includegraphics[width=1\linewidth]{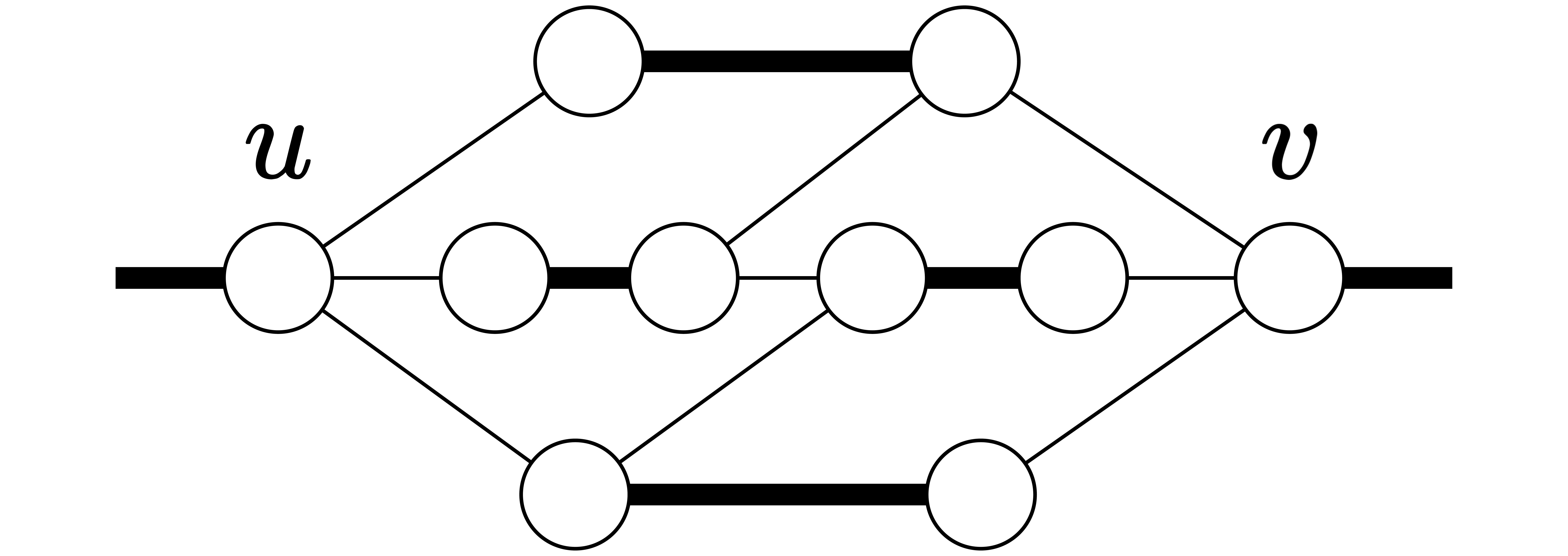}
      \caption{}
      \label{planargirth5_2}
   \end{subfigure}%

   \bigskip
   \begin{subfigure}{.3\textwidth}
      \centering
      \includegraphics[width=1\linewidth]{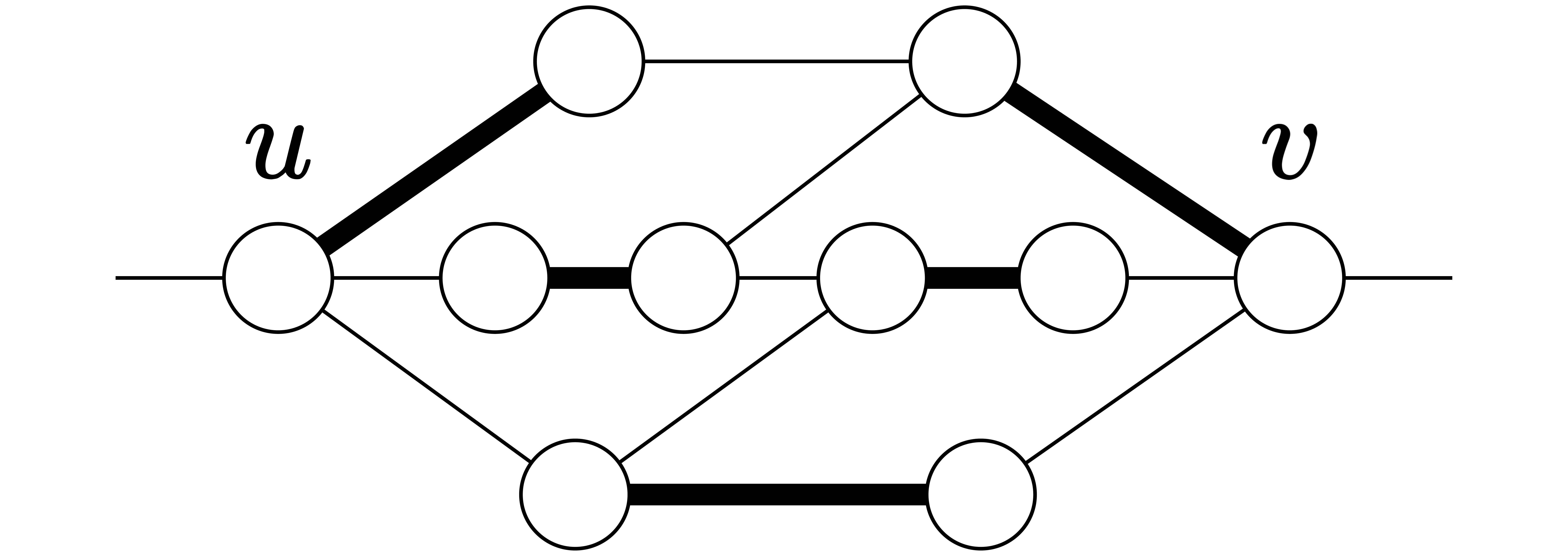}
      \caption{}
      \label{planargirth5_3}
   \end{subfigure}%
   \begin{subfigure}{.3\textwidth}
      \centering
      \includegraphics[width=1\linewidth]{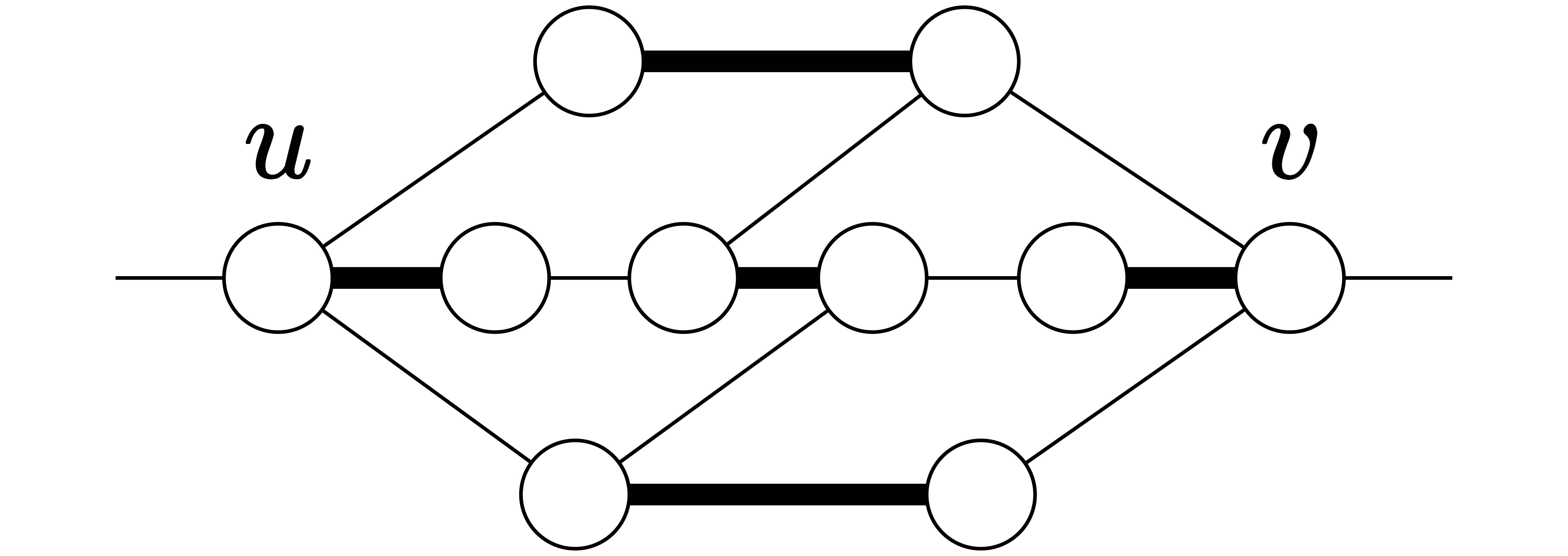}
      \caption{}
      \label{planargirth5_4}
   \end{subfigure}
   \caption{ The planar component $H$ with girth five (a).  The  perfect matching of $H\setminus \{u,v\}$ (b). The  two perfect matching-cuts of $H$ (c),(d).}
   \label{planargirth5}
\end{figure}

We use Theorem \ref{planarmax4} to prove the following.

\begin{theorem}
   Perfect Matching-Cut is NP-complete when restricted to planar $K_{1,4}$-free graphs with vertex degrees $\delta(v)\in \{3,4\}$.
\end{theorem}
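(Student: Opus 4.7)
The plan is to deduce the result from Theorem~\ref{planarmax4} by modifying the graph $G''$ produced there so that it becomes $K_{1,4}$-free, while preserving planarity, the degree bound, and the equivalence with the Segment $3$-Colorability instance. A vertex $v$ is the centre of an induced $K_{1,4}$ in $G''$ precisely when $d(v)=4$ and $N(v)$ is independent; by inspection of the gadgets of Figures~\ref{gadgetvertex}, \ref{gadgetedges} and~\ref{deg2}, almost every degree-$4$ vertex of $G''$ lies on a triangle and is therefore already $K_{1,4}$-free, leaving only a bounded list of offending vertices per gadget (typically those sitting in the middle of a $C_4$ or $C_8$ and those created by the gadget-merging operations).

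For each offending vertex $v$, I would perform the following local replacement: pick two incident edges $va$ and $vb$, delete them, and add two fresh vertices $x,y$ together with the five edges $vx,\,vy,\,xy,\,xa,\,yb$. This keeps $d(v)=4$, sets $d(x)=d(y)=3$, and leaves the degrees of $a$ and $b$ unchanged; the edge $xy$ now lies inside $N(v)$, destroying the induced $K_{1,4}$ at $v$. Since the triangle $vxy$ is immune by Fact~\ref{discclique}, Fact~\ref{clique} forces it to sit entirely on one side of any perfect matching-cut of the new graph $G'''$. A short case analysis then matches perfect matching-cuts of $G''$ and $G'''$: if $va\in M$, lift to $M'=(M\setminus\{va\})\cup\{vy,xa\}$; if $vb\in M$, lift to $M'=(M\setminus\{vb\})\cup\{vx,yb\}$; if neither lies in $M$, lift to $M'=M\cup\{xy\}$. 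In each case the cut edges of $G''$ translate to cut edges of $G'''$ (with $va$ replaced by $xa$, or $vb$ by $yb$, whenever necessary), and $M'$ is a perfect matching-cut of $G'''$.

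For the converse direction, given a perfect matching-cut $M'$ of $G'''$, I would read off which of the three local configurations occurs at each triangle $vxy$ and project back. The new vertices $x,y$ have only three neighbours, so they cannot centre a $K_{1,4}$; if some neighbour $a$ of an offending $v$ is itself offending, it is fixed by its own local replacement performed on two of \emph{its} other incident edges, so the replacements can be done independently and simultaneously. The operation is local, so it can be drawn inside any face incident to the deleted edges $va, vb$, preserving planarity. Applying the modification at every offending vertex yields a planar graph $G'''$ with all degrees in $\{3,4\}$, $K_{1,4}$-free, and admitting a perfect matching-cut if and only if the input graph $G$ is segment $3$-colourable, which completes the NP-completeness reduction.

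The main obstacle is the reverse direction: one has to rule out \emph{aberrant} perfect matchings of $G'''$ such as those containing both $xa$ and $yb$ while matching $v$ through $c$ or $d$, since these do not project directly to a perfect matching of $G''$. The key point is that once the partition $(X',Y')$ is fixed, the immunity of $vxy$ forces $\{v,x,y\}$ to lie together; if $a$ and $b$ happen to lie on the opposite side, then both $xa$ and $yb$ are cut edges, but since a matching-cut uses at most one edge per vertex in $G''$, this partition is equally realised by replacing $\{xa, yb\}$ with one of the canonical configurations above and matching $a, b$ along an $M'$-alternating path through the remainder of $G'''$; this gives a canonical perfect matching-cut from which a perfect matching-cut of $G''$ can be lifted.
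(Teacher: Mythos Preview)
Your overall strategy---start from Theorem~\ref{planarmax4} and locally repair each centre of an induced $K_{1,4}$---matches the paper's, but the gadget you propose is weaker than the paper's and the reverse direction of your reduction genuinely fails.

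The issue is precisely the aberrant configuration you flag: $\{vc,xa,yb\}\subseteq M'$ (or the same with $vd$). Here the immune triangle $\{v,x,y\}$ has three matched edges leaving it, so a perfect matching-cut of $G'''$ may place two or three of $a,b,c,d$ on the side opposite $\{v,x,y\}$. Your repair---keep the partition $(X',Y')$, switch to a canonical local configuration, and rematch via an alternating path---cannot work: if, say, $a$ lies opposite $\{v,x,y\}$ then $xa$ is a cut edge and hence must belong to $M'$, which already rules out every canonical configuration in which $x$ is matched to $v$ or to $y$. Thus whenever two of $a,b,c,d$ lie on the far side, \emph{no} canonical configuration is compatible with that partition; you would have to change the partition itself, and nothing guarantees that $G'''$ (let alone $G''$) admits another perfect matching-cut. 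Consequently $G'''$ can have perfect matching-cuts with no counterpart in $G''$, and the claimed equivalence breaks.

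The paper's gadget $G_u$ is engineered to forbid exactly this. It replaces $u$ by a five-vertex immune block with two exit vertices $u_1,u_2$ (each inheriting two of $u$'s four neighbours) together with three internal vertices forming a triangle. Because $G_u\setminus\{u_1,u_2\}$ has \emph{odd} size, every perfect matching of $H$ uses exactly one edge from the internal triangle into $\{u_1,u_2\}$, and hence exactly one of the four external edges $u_1w,u_1z,u_2x,u_2y$ lies in $M'$. Since every cut edge of a perfect matching-cut lies in the matching, at most one of $w,x,y,z$ can sit on the far side of the cut---exactly the behaviour of the original vertex $u$. Your triangle $vxy$ has no internal (non-exit) vertices, so no parity constraint forces a single external matched edge, and that is where your argument breaks.
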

\begin{proof}
   By Theorem \ref{planarmax4} Perfect Matching-Cut is NP-complete when restricted to planar graphs with vertex degrees $\delta(v)\in \{3,4\}$.
   Let $G=(V,E)$ be a planar graph with vertex degrees $\delta(v)\in \{3,4\}$ with $u\in V$ a vertex at the center of an induced $K_{1,4}$, that is, $G[\{u,w,x,y,z\}]=K_{1,4}$. Let $H$ be the graph where $u$ is replaced by the graph of five vertices $G_u$ has shown in Figure \ref{K14}. Note that $H$ is planar with vertex degrees $\delta(v)\in \{3,4\}$, and $H$ has one less $K_{1,4}$ as an induced subgraph than $G$. Also, $G_u$ is immune (every vertex is in a triangle) and $G_u\setminus \{u_1,u_2\}$ is odd. Therefore every perfect matching-cut of $H$ contains exactly one edge among $\{au_1,cu_1,bu_2,cu_2\}$ and exactly one edge among $\{u_1w,u_1z,u_2x,u_2y\}$.

   Let $M$ be a perfect matching-cut of $G$. W.l.o.g. $uw\in M$. Then $M\cup \{u_1w,ac,bu_2\}\setminus \{uw\}$ is a perfect matching-cut of $H$. Reciprocally, let $M'$ be a perfect matching-cut of $H$. W.l.o.g. $bc, u_1a, u_2x\in M'$. Since $G_u$ is immune, $M'\setminus \{bc, u_1a\}$ is a matching-cut of $H$. Therefore $M'\cup \{ux\} \setminus \{bc, u_1a, u_2x\}$ is a perfect matching-cut of $G$.

   Hence we can replace each $K_{1,4}$ from $G$ as done above, and the resulting graph has a perfect matching-cut if and only if $G$ has one.
\end{proof}

\begin{figure}[htbp]
   \begin{center}
   \includegraphics[width=15cm, height=2.5cm, keepaspectratio=true]{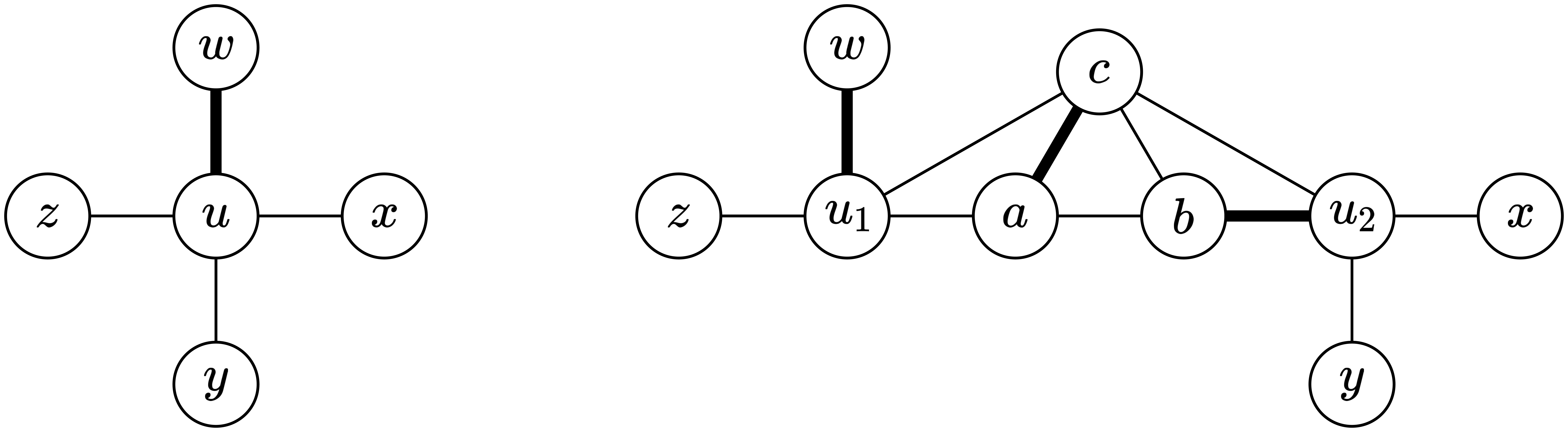}
   \end{center}
   \caption{How to replace a vertex $u$ at the center of a $K_{1,4}$ by $G_u$.}
   \label{K14}
\end{figure}

The remaining of this section deals with cubic graphs. For cubic planar graphs, we know the following.
\begin{theorem}[Diwan] \label{diwan}
   Every planar cubic bridgeless graph except $K_4$ has a perfect matching-cut.
\end{theorem}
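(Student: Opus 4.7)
The claim is equivalent to saying that every planar cubic bridgeless graph $G\neq K_4$ admits a 2-factor with at least two connected components, because in a cubic graph the complement of any perfect matching is a 2-factor, and this 2-factor is disconnected if and only if the perfect matching is a perfect matching-cut. So the task reduces to producing a ``disconnected 2-factor'' in every planar cubic bridgeless graph other than $K_4$.

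The plan is to argue by induction on $|V(G)|$, exploiting a shortest face of a plane embedding. By Euler's formula, a planar cubic graph has $n/2+2$ faces whose sizes sum to $3n$, so some face has size at most $5$. Pick a face $F$ of size $k\in\{3,4,5\}$ and perform a local reduction depending on $k$. For $k=3$, let $F=abc$ with outer neighbors $a',b',c'$; contract $F$ to a single vertex $v$ retaining the edges $va',vb',vc'$. The result $G'$ is planar, cubic, bridgeless, and strictly smaller. For $k=4$ identify a suitable pair of opposite vertices of $F$, and for $k=5$ split off a length-two path along $F$, in each case obtaining a smaller planar cubic bridgeless (multi)graph $G'$.

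If $G'\neq K_4$, the induction provides a disconnected 2-factor of $G'$, which one then lifts back to $G$ by reinserting the excised face: around a triangle $abc$, the lift uses either zero or two of the edges incident to the contracted vertex in the 2-factor of $G'$, and both cases extend to $abc$ without merging any components; analogous rules handle $k=4$ and $k=5$. If instead $G'\cong K_4$, then $G$ itself is one of finitely many small planar cubic bridgeless graphs (e.g.\ the $3$-prism, the $3$-prism with one edge subdivided twice, etc.), all of which one dispatches by exhibiting an explicit disconnected 2-factor (for the $3$-prism, the two triangular faces).

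The main obstacle will be the lifting step together with ruling out pathological reductions. First, contracting a short face can, a priori, create a bridge, a loop, or destroy cubicity; the bulk of the work is showing that in a planar cubic bridgeless graph one can always choose the short face so that the reduction preserves both bridgelessness and the cubic structure, or otherwise fall into an explicit small base case. Second, one must enumerate and verify by hand a finite list of small ``irreducible'' planar cubic bridgeless graphs, and check that $K_4$ is the unique member of this list with no disconnected 2-factor; this is precisely where the exception for $K_4$ appears. Once the reduction rules and the base list are pinned down, the inductive lifting is routine case analysis on which edges of $G'$ at the contracted vertex belong to the 2-factor.
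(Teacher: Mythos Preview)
The paper does not prove this theorem; it is quoted as a result of Diwan \cite{Diwan} and used as a black box, so there is no in-paper proof to compare against. Your reformulation in terms of disconnected $2$-factors is exactly the viewpoint of Diwan's original paper (whose title is ``Disconnected 2-Factors in Planar Cubic Bridgeless Graphs''), and the inductive strategy via a short face guaranteed by Euler's formula is indeed the backbone of that argument.

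That said, what you have written is a plan rather than a proof, and the parts you flag as ``the main obstacle'' are precisely where all the content lies. In particular, for $k=4$ and $k=5$ you have not specified the reductions (``identify a suitable pair of opposite vertices'', ``split off a length-two path'' are too vague to verify), you have not shown that a reduction preserving planarity, cubicity and bridgelessness can always be chosen, and the lifting step---ensuring that the pulled-back $2$-factor remains disconnected rather than merging the two components through the reinserted face---is asserted but not carried out. The triangle case already shows the issue: if in $G'$ the contracted vertex $v$ has exactly two of $va',vb',vc'$ in the $2$-factor, say $va'$ and $vb'$, then lifting naturally gives the path $a'\!-\!a\!-\!b\!-\!b'$ plus an isolated vertex $c$ in the would-be $2$-factor, which is not a $2$-factor at all; you need to use the edge $cc'$ and an edge of the triangle, and then argue that no component merger occurs. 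None of this is insurmountable, but it is the actual work of Diwan's paper, not a routine afterthought.
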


We deal with the cubic graphs that have a bridge.
\begin{prop}
   Let $G=(V,E)$ be a cubic graph and $W\subset E$ be its set of bridges. Then for every perfect matching $M$ of $G$, we have $W\subset M$.
\end{prop}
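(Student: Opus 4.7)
The plan is to fix an arbitrary bridge $uv \in W$ and show that any perfect matching $M$ must use it, by a parity argument on the two sides of the bridge.

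First, I would let $G_u$ and $G_v$ denote the two connected components of $G - uv$, where $u \in V(G_u)$ and $v \in V(G_v)$. Because $G$ is cubic, in the subgraph $G_u$ every vertex has degree $3$ except $u$, which lost its edge to $v$ and thus has degree $2$. Summing the degrees in $G_u$ gives $2 + 3(|V(G_u)|-1) = 3|V(G_u)| - 1$, which must be even since it equals $2|E(G_u)|$. Consequently $3|V(G_u)|$ is odd, so $|V(G_u)|$ is odd. By the same reasoning $|V(G_v)|$ is odd.

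Now suppose for contradiction that there exists a perfect matching $M$ of $G$ with $uv \notin M$. Then every edge of $M$ lies either entirely in $E(G_u)$ or entirely in $E(G_v)$ (no edge of $G$ other than $uv$ joins the two sides). In particular, $M \cap E(G_u)$ must cover every vertex of $V(G_u)$, because each such vertex is matched by $M$ and cannot be matched to a vertex of $V(G_v)$. But this means $M \cap E(G_u)$ is a perfect matching of $G_u$, which is impossible since $|V(G_u)|$ is odd. This contradiction forces $uv \in M$, and since $uv \in W$ was arbitrary, $W \subseteq M$.

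There is really no obstacle here; the whole argument reduces to the elementary parity observation that removing a bridge from a cubic graph leaves two components each having an odd number of vertices, so a perfect matching cannot live inside either component alone.
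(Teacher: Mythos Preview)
Your proof is correct and follows essentially the same approach as the paper's: both argue by contradiction, take one component of $G-uv$, observe via a degree-parity (handshaking) count that this component has an odd number of vertices, and conclude that the restriction of $M$ to that component cannot be a perfect matching. Your write-up is simply a slightly more explicit version of the same parity argument.
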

\begin{proof}
   By contradiction, we suppose that there exists a bridge $uv$ such that $uv\notin M$. Let $C$ be one of the two connected component of $G-uv$. The restriction of $M$ to $C$ is a perfect matching of $C$. But $C$ is odd since it has an even number of vertices of degree $3$ and exactly one vertex of degree $2$, a contradiction.
\end{proof}

Hence every cubic graph that has a bridge and a perfect matching, also has a perfect matching-cut. So we have a complete overview for planar cubic graphs, that is.
\begin{coro}
   Every planar cubic graph except $K_4$ that has a perfect matching has a perfect matching-cut.
\end{coro}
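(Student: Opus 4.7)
The corollary should follow immediately by combining Diwan's theorem with the preceding proposition and Fact~\ref{bridge}. My plan is to split on whether $G$ has a bridge.

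First, suppose $G$ is bridgeless. Since $G$ is planar, cubic, bridgeless and distinct from $K_4$, Theorem~\ref{diwan} directly gives a perfect matching-cut of $G$, and we are done. Note that the hypothesis ``$G$ has a perfect matching'' is automatic here by Petersen's theorem, but we do not even need to invoke it.

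Second, suppose $G$ has at least one bridge $uv$. By assumption $G$ admits some perfect matching $M$. By the proposition stated just above (every bridge of a cubic graph belongs to every perfect matching), we have $uv \in M$. Then $M$ is a perfect matching containing a bridge, so Fact~\ref{bridge} immediately yields that $M$ is a perfect matching-cut of $G$.

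The two cases together cover every planar cubic graph with a perfect matching other than $K_4$, which finishes the proof. There is no real obstacle here; the whole content of the corollary is packaging Diwan's theorem (for the bridgeless case) together with the observation that a bridge in a cubic graph is automatically forced into any perfect matching (for the bridged case), at which point Fact~\ref{bridge} closes the argument.
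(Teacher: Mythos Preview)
Your proof is correct and follows exactly the same approach as the paper: split into the bridgeless case (handled by Diwan's theorem) and the bridged case (handled by the preceding proposition together with Fact~\ref{bridge}). The paper presents the corollary as an immediate consequence of these two ingredients, just as you do.
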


Note that this does not hold for subcubic planar graph, see the graph on the left of Figure \ref{matchcut}. Also, it does not hold for cubic (non-planar) bridgeless graphs, since Diwan has shown in \cite{Diwan} that there exists an arbitrarily large class of cubic bridgeless graphs with a perfect matching that have no perfect matching-cut.

\section{$P_5$-free graphs}\label{P5free}
Here we prove that the Perfect Matching-Cut problem is polynomial for the class of $P_5$-free graphs.  We use the two theorems below.

\begin{theorem}[Bacs\'o and  Tuza \cite{Bacso}]
   Let $G$ be a connected $P_5$-free graph. Then $G$ has a dominating clique or a dominating induced $P_3$.
\end{theorem}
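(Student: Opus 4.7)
The plan is to take an extremal connected dominating set and then use $P_5$-freeness to severely restrict its internal structure. Since $G$ is connected, a connected dominating set exists (the whole vertex set works); let $D$ be one of minimum cardinality. My first step would be to prove that $G[D]$ has diameter at most $2$. Suppose otherwise, so that $G[D]$ contains an induced path $d_1 d_2 d_3 d_4$. By the minimality of $D$, the vertex $d_1$ must have a \emph{private neighbor}, that is, a vertex $x \in V \setminus D$ with $N(x) \cap D = \{d_1\}$ (else $D \setminus \{d_1\}$ would still dominate and induce a connected subgraph, violating minimality). Because $x$ is then non-adjacent to each of $d_2, d_3, d_4$, the vertices $x, d_1, d_2, d_3, d_4$ induce a $P_5$ in $G$, contradicting the hypothesis.

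The second step is a case split on the diameter of $G[D]$. If $\operatorname{diam}(G[D]) \leq 1$, then $D$ is a clique and we have the desired dominating clique. If $\operatorname{diam}(G[D]) = 2$, I would pick a pair $a, c \in D$ at $G[D]$-distance exactly $2$ together with a common neighbor $b \in D$; the triple $\{a,b,c\}$ clearly induces a $P_3$ in $G$. It then remains to show that this $P_3$ dominates $V(G)$. If some $v \in V$ were adjacent to none of $a, b, c$, one of its dominators $d \in D \setminus \{a,b,c\}$ together with a shortest path from $d$ to $\{a,b,c\}$ inside $G[D]$ would yield an induced $P_5$ of the form $v$-$d$-$a$-$b$-$c$ (or a rotation of it), again contradicting $P_5$-freeness.

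The principal obstacle is exactly this last step: an arbitrary induced $P_3$ of $G[D]$ need not dominate, and realising one that does dominate requires a careful choice of the pair $(a,c)$, since $d$ could a priori be adjacent to $b$ or $c$ and spoil the would-be induced $P_5$. To close this gap I would sharpen the extremal selection of $D$, for instance by choosing $D$ to minimise, among all minimum connected dominating sets, a secondary quantity such as $\sum_{d \in D} \deg_G(d)$, and by picking $(a,c)$ as a diametral pair of $G[D]$. With a sufficiently rigid extremal choice, every configuration in which a vertex escapes $N[\{a,b,c\}]$ produces a forbidden induced $P_5$, so $\{a,b,c\}$ must dominate and the theorem follows.
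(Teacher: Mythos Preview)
The paper does not prove this theorem; it is quoted from Bacs\'o and Tuza and used as a black box (together with the Camby--Schaudt theorem stated immediately afterwards) in the algorithm for $P_5$-free graphs. So there is no proof in the paper to compare against, and your sketch has to be judged on its own.

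Your extremal approach via a minimum connected dominating set $D$ is the standard line of attack, but two gaps remain.

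In the first step you claim that if $d_1$ has no private neighbour then $D\setminus\{d_1\}$ is still a connected dominating set. Domination follows, but connectedness does not: $d_1$ may be a cut vertex of $G[D]$, and nothing in ``$d_1$ is the endpoint of an induced $P_4$'' prevents that. The repair is easy --- if $d_1$ is a cut vertex, take any neighbour $c\in D$ of $d_1$ lying in a component of $G[D]-d_1$ other than the one containing $d_2$; then $c\,d_1\,d_2\,d_3\,d_4$ is already an induced $P_5$ --- but it has to be said.

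The second step is where the real work lies, and you rightly flag it. Your proposed cure (a secondary minimisation of $\sum_{d\in D}\deg_G(d)$ together with a diametral choice of $(a,c)$) does not by itself eliminate the bad configuration: a dominator $d\in D$ of the escaping vertex $v$ can be adjacent to both $a$ and $c$ while non-adjacent to $b$, and then \emph{no} induced $P_5$ lives on $\{v,d,a,b,c\}$ at all, regardless of how $(a,c)$ was selected. To close this one has to bring in further vertices --- typically private neighbours of $a$ or of $c$, whose existence again requires the cut-vertex/private-neighbour dichotomy above --- and run a short but genuine case analysis on the adjacencies of $d$ to $\{a,b,c\}$, each branch yielding either an induced $P_5$ or a strictly smaller connected dominating set. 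Your write-up stops before this analysis, and the phrase ``with a sufficiently rigid extremal choice, every configuration \ldots\ produces a forbidden induced $P_5$'' is an assertion of what is to be proved rather than a proof.
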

\begin{theorem}[Camby and Schaudt \cite{Camby}]
   Given a connected graph $G$ on $n$ vertices and $m$ edges, one can compute in time $O(n^5(n+m))$ a connected dominating set $X$ with the following property: for the minimum $k\geq 4$ such that $G$ is $P_k$-free, $G[X]$ is $P_{k-2}$-free or $G[X]$ is isomorphic to $P_{k-2}$.
\end{theorem}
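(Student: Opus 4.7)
My plan would be to exhibit a polynomial-time shrinking procedure that starts from an arbitrary connected dominating set (CDS) and refines it until the desired structural property holds. An initial CDS can be obtained in linear time (take a BFS tree and iteratively remove leaves until only internal vertices remain; the internal vertices of any spanning tree form a CDS). This gives the starting point $X_0$, and the loop invariant throughout the algorithm is that $X$ is a connected dominating set of $G$.

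The core of the refinement is a reduction rule driven by the following structural lemma, which I would aim to prove first. \emph{Lemma: If $X$ is a CDS and $G[X]$ contains an induced $P_{k-2}$, $v_1 v_2 \cdots v_{k-2}$, but $G[X] \not\cong P_{k-2}$, and moreover $X$ admits no local improvement (no vertex of $X$ can be removed while keeping the CDS property and no vertex swap reduces a chosen potential), then $G$ contains an induced $P_k$.} The idea for this lemma is to exploit \emph{private neighbors}: minimality forces every internal vertex $v_i$ of the path and every extra vertex $u \in X \setminus V(P)$ to have a ``private'' witness either outside $X$ (a vertex only dominated through it) or inside $X$ (a vertex whose removal disconnects $G[X]$). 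Using these witnesses, one extends the induced $P_{k-2}$ by at least one vertex at each end --- either an outside private neighbor of $v_1$ and $v_{k-2}$ (yielding an induced $P_k$ directly), or an interior extension along a shortest path from $u$ to $V(P)$ in $G[X]$, with the path avoiding chords by a careful first-adjacent-vertex argument. Contradicting $P_k$-freeness then forces some local improvement to exist, giving the reduction rule.

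With the lemma in hand, the algorithm is straightforward: repeatedly scan $X$ for a legal removal or swap; if one exists, apply it and decrease a potential function (e.g., $|X|$, breaking ties by number of induced $P_{k-2}$'s in $G[X]$); if none exists, either $G[X]$ is $P_{k-2}$-free or $G[X] \cong P_{k-2}$, and we return $X$. Termination in polynomially many iterations follows because $|X|$ only decreases and the tie-breaking is bounded polynomially in $n$; each iteration can be implemented in $O(n(n+m))$ time (testing connectivity and domination after a candidate change), giving the stated $O(n^5(n+m))$ overall bound when one accounts for the $O(n^{k-2})$ search for induced $P_{k-2}$'s being cheaper for small $k$.

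The main obstacle I expect is the proof of the structural lemma, specifically the case analysis for how to extend the induced $P_{k-2}$ into a genuine induced $P_k$ of $G$ without creating chords; this is where the hypothesis that $G$ is $P_k$-free with $k$ \emph{minimal} is crucial, because it prevents pathological ``almost-chords'' from the outside private neighbors back to the interior of the path. Handling the boundary case where $G[X]$ is exactly $P_{k-2}$ (so no reduction is possible even though $G[X]$ contains an induced $P_{k-2}$) is built into the termination condition of the loop.
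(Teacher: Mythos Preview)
The paper does not prove this theorem; it is quoted from Camby and Schaudt \cite{Camby} and used only as a black box to derive Corollary~\ref{P5dom}. So there is no ``paper's own proof'' to compare against here.

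On the merits of your sketch: the overall strategy --- start from a connected dominating set and shrink it by local moves until a structural lemma forces the desired conclusion --- is indeed the shape of the Camby--Schaudt argument. But your write-up has a genuine gap in the algorithmic part. The parameter $k$ is \emph{not an input}: it is defined a posteriori as the least $k\ge 4$ for which $G$ happens to be $P_k$-free, and it can be as large as $n+1$. Consequently, any step that ``searches for induced $P_{k-2}$'s'' or uses ``number of induced $P_{k-2}$'s in $G[X]$'' as a tie-breaker is ill-defined (you do not know $k$) and, even if you computed $k$ first, would cost $n^{\Theta(k)}$ and destroy the claimed $O(n^5(n+m))$ bound. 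The actual algorithm must be $k$-agnostic: the loop only tests local CDS-preserving deletions and swaps, the potential is simply $|X|$ (or a similarly $k$-independent quantity), and the structural lemma is what guarantees, after the fact, that the locally irreducible $X$ you end up with satisfies the $P_{k-2}$ condition for the relevant $k$. Your remark that the $O(n^{k-2})$ search is ``cheaper for small $k$'' does not address the large-$k$ case at all.

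Secondly, you correctly identify the structural lemma as the crux, but as written it is only a statement of what needs to be true, not an argument. The delicate point --- extending an induced $P_{k-2}$ inside a locally irreducible CDS to an induced $P_k$ in $G$ without creating chords --- is exactly where Camby and Schaudt do the real work, and your private-neighbour sketch does not yet handle the case where the private witnesses of the endpoints $v_1,v_{k-2}$ see interior vertices of the path. Until that case analysis is carried out, the proposal is a plan rather than a proof.
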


This implies that for a conneted $P_5$-free graph, a connected dominating set $X$ is such that  $G[X]$ is $P_{3}$-free or $G[X]=P_3$ and can be computed in polynomial time. Note that a connected $P_3$-free graph is a clique. Hence it follows.

\begin{coro}\label{P5dom}
   Given a connected $P_5$-free graph $G$, computing a connected dominating set that is either a $P_3$ or a clique can be done in polynomial time.
\end{coro}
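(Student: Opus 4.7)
The plan is to apply the Camby--Schaudt theorem to $G$ directly and then do a small case analysis based on the value of $k$. Since $G$ is $P_5$-free, the minimum $k \ge 4$ for which $G$ is $P_k$-free is either $k=4$ (when $G$ is even $P_4$-free) or $k=5$ (the generic case). Running Camby--Schaudt produces, in time $O(n^5(n+m))$, a connected dominating set $X$ with a guaranteed structural property on $G[X]$ depending on this $k$, and I will check that in both cases the resulting $G[X]$ is exactly one of the two shapes demanded by the corollary.

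First I would record the following simple fact, which is the only non-bookkeeping ingredient: \emph{a connected $P_3$-free graph is a clique}. Indeed, if a connected graph $H$ were not complete, it would contain two non-adjacent vertices; taking a shortest path between them in $H$ (which has length at least $2$) and keeping its first three vertices yields an induced $P_3$, a contradiction. In particular, a connected $P_2$-free graph is a single vertex, which is trivially a clique.

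Next I would split into the two cases. If $k=5$, Camby--Schaudt guarantees $G[X]$ is either $P_3$-free or isomorphic to $P_3$; since $G[X]$ is connected, the first option gives a clique by the fact above, and the second option is directly a $P_3$. If $k=4$, Camby--Schaudt gives $G[X]$ either $P_2$-free or equal to $P_2$; connectedness forces $G[X]$ to be a single vertex or a single edge, both of which are cliques. In every case $G[X]$ is a connected dominating set that is a clique or a $P_3$, as required.

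Finally I would note that the whole procedure is polynomial: the Camby--Schaudt computation itself runs in $O(n^5(n+m))$, and determining whether $G$ is $P_4$-free (to know which case we are in, though this is not strictly needed since the conclusion holds in either case) is polynomial, as is checking which of the two shapes $G[X]$ has. There is no real obstacle here; the corollary is essentially a direct reading of the Camby--Schaudt theorem once one observes the elementary fact that connected $P_3$-free means clique. The only point that deserves a line of care is handling the $k=4$ situation, since the statement of the corollary mentions only cliques and $P_3$'s and one must verify that the Camby--Schaudt output for $P_4$-free graphs still fits into the ``clique'' alternative.
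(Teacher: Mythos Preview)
Your proposal is correct and follows essentially the same approach as the paper: apply Camby--Schaudt and use the observation that a connected $P_3$-free graph is a clique. The only difference is that you explicitly split into the cases $k=4$ and $k=5$, whereas the paper treats them uniformly (since the $k=4$ output, being $P_2$-free or a $P_2$, is in particular $P_3$-free and hence a clique); your extra care here is harmless and arguably cleaner.
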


We prove the following.

\begin{theorem}
There is a polynomial time algorithm with the following specifications:

{\bf Input:} A connected $P_5$-free graph $G$.

{\bf Output:} Either $M$ a perfect matching-cut of $G$, or a determination that there is no  perfect matching-cut.

\end{theorem}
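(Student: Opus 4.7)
The plan is to combine Corollary \ref{P5dom} with forced-propagation on $D$. After routine preprocessing---reject $|V|$ odd, handle bridges via Fact \ref{bridge} using a standard perfect matching algorithm, and handle degree-one vertices via Fact \ref{leaf}---we may assume $G$ is bridgeless, $\delta(G)\ge 2$, and $|V|$ is even. Note that $P_5$-free graphs satisfy $diam(G)\le 3$, since any shortest path of length $4$ would be an induced $P_5$. Compute, via Corollary \ref{P5dom}, a connected dominating set $D$ that is either an induced $P_3$ or a clique.

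Consider first the clique case $|V(D)|\ge 3$. Then $D$ is immune (any matching leaves $K_r$ connected for $r\ge 3$), so by Fact \ref{clique} we may assume $V(D)\subseteq X$. By Fact \ref{2neighX} every vertex outside $V(D)$ with two neighbors in $V(D)$ lies in $X$, so each remaining ``free'' vertex $v$ has a unique neighbor $\phi(v)\in V(D)$. For any $y$ placed in $Y$, the cut edge incident to $y$ must be $y\phi(y)\in M$, and (by the general version of Fact \ref{neighmat}) every other neighbor of $y$ must also lie in $Y$. Iterating, $Y$ is forced to be a union of connected components of $G[Z']$, where $Z'$ is the set of free vertices all of whose non-$V(D)$ neighbors are themselves free; moreover $\phi|_Y$ must be injective for $E(X,Y)$ to be a matching. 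I would enumerate all $O(|E|)$ candidate cut edges $uv$ with $u\in V(D)$ and $v$ free, take the resulting forced closure, verify that $\phi|_Y$ stays injective and $E(X,Y)$ is a matching, and run a standard perfect matching algorithm on the residual unmatched vertices in $O(|V|^{5/2})$ time. When $D$ is an induced $P_3$ or a clique of size at most $2$, a constant number of placements of $V(D)$ into $(X,Y)$ is tried---discarding placements that would force two incident cut edges at a single vertex of $D$---followed by the same propagation and matching check.

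The main obstacle is completeness in the clique case: a priori a valid $Y$ could be a union of several components of $G[Z']$ with pairwise disjoint $\phi$-images, and no individual component alone might yield a perfect matching of the residual $G[X\setminus\phi(Y)]$. The technical heart of the proof must therefore exploit $P_5$-freeness and the dominating-clique structure of $D$ to restrict the components of $G[Z']$ and the behaviour of $\phi$ on them---plausibly showing that the ``conflict graph'' on components (with edges recording shared $\phi$-images) is sparse enough that a polynomial packing/matching computation suffices to assemble a valid multi-component $Y$. Turning this structural restriction into an efficient enumeration is where the bulk of the proof lies; the remaining verification (matching-cut test plus one perfect matching call on the residual graph) is routine polynomial-time computation.
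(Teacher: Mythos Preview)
Your proposal sets up the right framework---compute the dominating clique or $P_3$ from Corollary~\ref{P5dom} and propagate forced membership in $X$---but it contains a genuine gap. You explicitly name the ``main obstacle'' (that a valid $Y$ might need to be assembled from several components of $G[Z']$, with no single seed edge sufficing) and then defer its resolution to an unspecified packing computation on a ``conflict graph''. That is not a routine detail to be filled in; it is the crux of the argument, and the complexity you anticipate is illusory.

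The key lemma you are missing is that, after the right propagation, $G[V\setminus X]$ is \emph{connected}. Two adjustments make this work. First, strengthen the propagation: besides absorbing vertices with two neighbours in $X$ (Fact~\ref{2neighX}), also absorb any adjacent pair $u,v\notin X$ sharing a common neighbour $x\in X$, since $\{x,u,v\}$ is a triangle and triangles are immune (Fact~\ref{discclique}). After this closure every $v\in V\setminus X$ has exactly one neighbour in $X$ (necessarily in the dominating set), and adjacent vertices of $V\setminus X$ have distinct such neighbours. Second, invoke $P_5$-freeness directly: if $v_1v_2$ and $u_1u_2$ were edges in two different components of $G[V\setminus X]$, a short case check routing through the dominating clique or $P_3$ produces an induced $P_5$. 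Hence $G[V\setminus X]$ has a single component. Now if some $v\in V\setminus X$ lies in $Y$, its unique $X$-edge is the matching edge at $v$, so all its $V\setminus X$-edges survive in $G-M$; connectedness then forces the whole of $V\setminus X$ into $Y$. There is therefore exactly one candidate cut to test for each placement of $D$: check that $\phi$ is injective on $V\setminus X$ (else no perfect matching-cut with $D\subseteq X$) and that the residual $G[X\setminus\phi(V\setminus X)]$ has a perfect matching. Together with the constant number of placements of $D$ when $\vert D\vert\le 2$ or $D$ is a $P_3$---each reducing, via the same connectedness lemma or a direct neighbourhood check, to a single matching test---this yields the polynomial algorithm with no packing or conflict-graph argument at all.
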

\begin{proof}
Let $G=(V,E)$ be a connected $P_5$-free graph. If $G$ has no perfect matching then there is no  perfect matching-cut.  If $G$ has a leaf then every perfect matching $M$ is a perfect matching-cut. Now $\delta(G)\ge 2$.

For $X\subseteq V$ the procedure $\cal P$ works as follows. Do the following as long as possible: if $v\in V\setminus X$ has two neighbors in $X$ then $X\leftarrow X\cup\{v\}$; if there exists $x\in X$ such that $x$ has two neighbors $u,v\in V\setminus X$ with $uv\in E$ then $X\leftarrow X\cup\{u,v\}$. In the following our objective is to try to construct a (perfect) matching-cut  with a cut between  $Z$ and $V\setminus Z$ such that $X\subseteq Z$.  Thus by Fact \ref{discclique} we apply $\cal P$ with $X$.

\begin{itemize}
\item A clique dominates $G$:

Let $K=\{k_1,\ldots,k_p\}$ be a dominating clique of $G$ and we assume that $p$ is minimum. Note that every vertex $v\not\in K$ has a neighbor $k_i\in K$.
\begin{itemize}
\item $p= 1$:  Every vertex is contained in a triangle and $k_1$ is in every triangle. By Fact \ref{discclique} there is no  perfect matching-cut.
\item $p= 2$: Let $V_1=\{v\in V\setminus K: vk_1\in E,\ vk_2\not\in E\}$, $V_2=\{v\in V\setminus K: vk_1\not\in E,\ vk_2 \in E\}$.

Firstly, we try to construct a perfect matching-cut from $X=\{k_1,k_2\}$. We apply $\cal P$ with $X$. Afterward, if $V=X$ then there is no  perfect matching-cut. When $V\ne X$, for every $v_1\in V_1\setminus X$, $v_1$ has all its neighbors in $V_2\setminus X$ and, for every $v_2\in V_2\setminus X$ then $v_2$ has all its neighbors in $V_1\setminus X$. Recall that $G$ has no leaves and therefore every vertex of $(V_1\cup V_2)\setminus X$ has at least one neighbor outside of $X$.

We show that $G[V\setminus X]$ is connected. For the sake of contradiction, let $u_1u_2$ and $v_1v_2$ be two edges in two distinct components of $G[V\setminus X]$, where $u_1,v_1\in V_1\setminus X$, $u_2,v_2\in V_2\setminus X$. Then $v_2-v_1-k_1-u_1-u_2=P_5$, a contradiction. If there exists $M$ a perfect matching-cut of $G$ then there exists a vertex of  $V\setminus X$, say $v_1\in V_1\setminus X$, such that $v_1$ is disconnected from $X$ in $G-M$,  so $v_1k_1\in M$. We show that $v_1$ has exactly one neighbor in $V_2\setminus X$. For contradiction let  $u_2,v_2$ be two neighbors of $v_1$ in $V_2\setminus X$. At least one edge among $u_2k_2,v_2k_2$ is an edge of $G-M$. Since $v_1v_2, v_1u_2\not\in M$ we have that $v_1$ is connected to $X$, a contradiction. Thus $v_1$ has exactly one neighbor $v_2\in V_2\setminus X$ and $v_2k_2\in M$. By symmetry, and since $G[V\setminus X]$ is connected,  $V\setminus X=\{v_1,v_2\}$ with $v_1v_2\in E$. Now let $M'$ be a maximum matching of $G[V\setminus\{k_1,k_2,v_1,v_2\}]$. If $M'$ is perfect then $M=M'\cup\{k_1v_1,k_2v_2\}$ is a perfect matching-cut of $G$ else there is no perfect matching-cut.

Secondly, we try to construct $M$ a perfect matching-cut such that $k_1$ is disconnected from $k_2$ in $G-M$. This means that we have to cut all edges between $X_1=N[k_1]\setminus \{k_2\}$ and $X_2=N[k_2]\setminus \{k_1\}$. Hence if $X_1\cap X_2\ne\emptyset$ or  $E(X_1,X_2)$ is not a matching-cut of $G$, then there is no perfect matching-cut. Otherwise, let $W$ be the set of vertices with an extremity in $E(X_1,X_2)$. Then let $M'$ be a maximum matching of $G-W$. If $M'$ is perfect then $M=E(X_1,X_2)\cup M'$ is a perfect matching-cut of $G$. Else there is no such perfect matching-cut.

\item $p\ge 3$: By Fact \ref{discclique} if a perfect matching-cut $M$ exists then all the vertices $k_1,\dots k_p$ are in a same component of $G-M$. We apply $\cal P$ with $X=K$.
If $V=X$ then there is no  perfect matching-cut. Now $V\ne X$. With the same argument as above $G[V\setminus X]$ is connected.
If there exists $M$ a perfect matching-cut then there must be a vertex $v$ of  $V\setminus X$ such that $v$ is disconnected from $X$ in $G-M$.  Let $vk_1\in M$. Let $u$ be a neighbor of $v$ in $\in V\setminus X$. We have  $uk_i\in M,i\ne 1$. It follows that if there exist two vertices of $V\setminus X$  with a common neighbor in $K$ then there is no  perfect matching-cut. Otherwise there exists $M'$ a (perfect) matching between the vertices of $V\setminus X$ and $K'\subset K$. Then let $M''$ be a maximum matching of $G[X\setminus K']$. If $M''$ if perfect then $M=M'\cup M''$ is a perfect matching-cut of $G$, else there is no such perfect matching-cut.
\end{itemize}

\item An induced $P_3$ dominates $G$:

Let $P_3=a-b-c$ be a dominating path of $G$. Note that every vertex $v\not\in\{a,b,c\}$ has a neighbor in $\{a,b,c\}$.
If $M$ is a perfect matching-cut then there is an edge-cut $E(X,V\setminus X)\subseteq M$ with at least two consecutive vertices of $P_3$ that are in $X$. By symmetry we have two cases to consider, that are, $\{a,b,c\}\subseteq X$ or $\{a,b\}\subseteq X$ and $c\in V\setminus X$.

\begin{itemize}

\item $\{a,b,c\}\subseteq X$: We apply $\cal P$ with $X=\{a,b,c\}$. If $V=X$ then there is no perfect matching-cut.
Now $V\setminus X\ne\emptyset$ and every  vertex $v\in V\setminus X$ has exactly one neighbor  in $P_3$, and a neighbor $u\in V\setminus X$. We show that $G[V\setminus X]$ is connected. For the sake of contradiction, we suppose that there are $v_1v_2$ and $u_1u_2$ two edges in two distinct components of $G[V\setminus X]$. Since the neighbors of $v_1,v_2$, respectively $u_1,u_2$, in $P_3$ are distinct we can suppose that $v_1,u_1$ have a common neighbor $w\in\{a,b,c\}$. Hence $v_2-v_1-w-u_1-u_2=P_5$, a contradiction. So $G[V\setminus X]$ is connected.

If there exists $M$ a perfect matching-cut then there exists $v\in V\setminus X$ such that $v$ is disconnected from $X$ in $G-M$. Let $w_v\in \{a,b,c\}$ be the neighbor of $v$ in $P_3$. We have $vw_v\in M$. Now for every neighbor $u$ of $v$ with $u\in V\setminus X$, let us denote $w_u$, $w_u\ne w_v,$ its neighbor in $P_3$.  Then $uw_u\in M$. Therefore $2\le \vert V\setminus X\vert\le 3$ and $E(X,V\setminus X)\subseteq M$ is a matching-cut. Let $M'$ be a maximum matching of $G[X\setminus N(V\setminus X)]$. If $M'$ is perfect then $M=M'\cup E(X,V\setminus X)$ is a perfect matching-cut of $G$, else there is no perfect matching-cut.

\item $\{a,b\}\subseteq X$ and $c\in V\setminus X$: We must find an edge-cut $M'$ so that there is no path from $c$ to $a,b$ in $G-M'$. Hence there are no paths  between $X_1=N[c]\setminus \{b\}$ and $X_2=N[a]\cup N[b]\setminus \{c\}$. If $X_1\cap X_2\ne\emptyset$ or  $E(X_1,X_2)$ is not a matching-cut then there is no such perfect matching-cut. Otherwithe, let $W$ be the set of vertices with an extremity in the matching-cut $E(X_1,X_2)$. Now let $M''$ be a maximum matching of $G - W$. If $M''$ is perfect then $M=E(X_1,X_2)\cup M''$ is a perfect matching-cut of $G$, else there is no such perfect matching-cut.
\end{itemize}
\end{itemize}
By Corollary \ref{P5dom} computing a $P_3$ dominating set or a clique dominating set is polynomial. Computing a maximum matching is polynomial.  Hence the algorithm is polynomial.\end{proof}

\begin{coro}
The Perfect Matching-Cut problem is polynomial for the classes of  cographs,  split graphs,  cobipartite graphs.
\end{coro}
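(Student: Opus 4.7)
The plan is very short: the corollary is an immediate consequence of the preceding theorem, since each of the three classes is contained in the class of $P_5$-free graphs. So the only thing to justify is the inclusion in each case, which is a standard textbook fact.

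First I would recall that a cograph is by definition a $P_4$-free graph; in particular every cograph is $P_5$-free, so the previous theorem applies. Next I would handle split graphs by observing that an induced $P_5$ on vertices $v_1v_2v_3v_4v_5$ contains $\{v_1v_2, v_4v_5\}$ as an induced $2K_2$, and split graphs are well known to be $(2K_2, C_4, C_5)$-free, hence $P_5$-free. Finally, for a cobipartite graph $G$ whose vertex set is the union of two cliques $K$ and $K'$, any five vertices must, by pigeonhole, contain three in the same part; but these three form a triangle and $P_5$ is triangle-free, so $G$ cannot contain an induced $P_5$.

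Having established these three inclusions, the corollary follows at once by applying the polynomial-time algorithm of the previous theorem. There is no real obstacle here, since the difficult work has been done in the $P_5$-free case; the only small care needed is choosing the right one-line argument for each class rather than invoking a vague ``well-known'' statement.
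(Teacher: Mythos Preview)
Your proposal is correct and follows exactly the paper's approach: the paper's proof is the single sentence ``These classes of graphs are subclasses of the class of $P_5$-free graphs,'' and you simply flesh out why each inclusion holds. Your added justifications for the three inclusions are fine and a bit more informative than the paper's bare assertion.
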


\begin{proof}
These classes of graphs are subclasses of the class of $P_5$-free graphs.
\end{proof}

\section{Claw-free graphs}\label{claw}
We show that the Perfect Matching-Cut problem is polynomial for the class of claw-free graphs. For our proof
we use the theorem  proved by D. P. Sumner in \cite{Sumner}.
\begin{theorem}\label{Sumner}
Every connected claw-free graph with an even number of vertices has a perfect matching.
\end{theorem}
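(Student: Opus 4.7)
The plan is to prove Sumner's theorem by contradiction via Berge's augmenting-path characterization of maximum matchings. Suppose $G$ is a counterexample: connected, claw-free, with $|V(G)|$ even, and admitting no perfect matching. Let $M$ be a maximum matching of $G$, chosen so as to minimise, over all maximum matchings and all unordered pairs $\{a,b\}$ of $M$-unmatched vertices, the quantity $dist_G(a,b)$. Such a pair exists because $|V(G)| - 2|M|$ is a positive even integer, so at least two vertices are unmatched, and $G$ is connected.

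Let $P = a\, v_1\, v_2 \cdots v_{k-1}\, b$ be a shortest $a,b$-path. First I would rule out $k=1$: the edge $ab$ would extend $M$, contradicting maximality. Next I would observe that every interior vertex of $P$ is $M$-matched, as otherwise the subpath from $a$ to the first unmatched interior vertex would yield an $M$-unmatched pair strictly closer than $\{a,b\}$, violating the extremal choice of $M$. Let $w$ denote the $M$-partner of $v_1$; necessarily $w \ne a$ since $a$ is unmatched. Three facts follow quickly: (i) $\{a,v_2\}$ is a non-edge, by the shortest-path property of $P$; (ii) the vertices $a, v_2, w$ all lie in $N(v_1)$; (iii) consequently the four vertices $\{v_1,a,v_2,w\}$ would induce a claw centred at $v_1$ unless at least one of $aw$ or $v_2 w$ is an edge.

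The case $k = 2$ then closes immediately: here $v_2 = b$, and either $aw \in E$ or $bw = v_2 w \in E$ gives an $M$-augmenting path of length three, namely $a\!-\!w\!-\!v_1\!-\!b$ or $a\!-\!v_1\!-\!w\!-\!b$ respectively, contradicting the maximality of $M$. The main obstacle is the iterative treatment of $k \ge 3$. My plan there is to perform a local swap in $M$ that either produces a new maximum matching with a strictly closer unmatched pair in $G$ (for example, replacing $v_1 w$ by $av_1$ when $aw \in E$ yields the unmatched pair $\{w,b\}$), which contradicts the extremal choice of $M$, or propagates the configuration one vertex further along $P$ (for example, replacing $v_1 w$ by $v_1 v_2$ when $v_2 w \in E$, making $w$ the new ``starting'' unmatched vertex closer to $b$). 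Iterating this at most $k-1$ times reduces the analysis to the settled base case and produces the desired augmenting path. The delicate point is verifying at each stage that the rerouted matching remains maximum and that claw-freeness continues to apply to the relevant triple in the common neighbourhood of the current $v_i$, so that the dichotomy ``closer pair or propagate'' genuinely exhausts the possibilities; beyond this bookkeeping, no new idea beyond the base case is needed.
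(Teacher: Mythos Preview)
The paper does not prove this theorem; it is quoted from Sumner's 1974 paper and used as a black box in Section~\ref{claw}. So there is no ``paper's proof'' to compare against, and I will simply assess your argument on its own merits.

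Your overall strategy---pick a maximum matching $M$ and an $M$-exposed pair $\{a,b\}$ at minimum distance, take a shortest $a$--$b$ path, and exploit claw-freeness at the first interior vertex $v_1$---is a standard and viable route to Sumner's theorem. However, both of the swaps you describe for $k\ge 3$ are incorrect as written, and the second one hides a genuine difficulty.

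In the case $v_2w\in E$ you propose ``replacing $v_1w$ by $v_1v_2$''. This does not yield a matching: $v_2$ is an interior vertex of $P$ and hence already $M$-saturated, so adding $v_1v_2$ gives $v_2$ two matching edges. The swap you actually want is $M'=M-v_1w+av_1$; then $w$ is exposed, and since $wv_2\in E$ one has $dist_G(w,b)\le 1+dist_G(v_2,b)=k-1$, contradicting the extremal choice.

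More seriously, in the case $aw\in E$ with $v_2w\notin E$, your swap $M'=M-v_1w+av_1$ exposes $w$, but you only obtain $dist_G(w,b)\le 1+dist_G(v_1,b)=k$, and equality can occur. Running the same argument on $(M',w,b)$ with the shortest path $w,v_1,v_2,\ldots,b$ (here $v_1$'s $M'$-mate is $a$, and again $wa\in E$ while $av_2\notin E$) simply swaps back to $(M,a,b)$: the iteration loops rather than terminates. One correct repair is to use the edge $aw$ differently: let $z$ be the $M$-mate of $v_2$ and set
\[
M''=(M\setminus\{v_1w,\,v_2z\})\cup\{aw,\,v_1v_2\}.
\]
This is a maximum matching with $z$ and $b$ exposed and $dist_G(z,b)\le 1+dist_G(v_2,b)=k-1$, giving the desired contradiction. (One must also dispose separately of the easy sub-case $w=v_2$, i.e.\ $v_1v_2\in M$, which you did not address.) This second repair is a real additional idea, not mere bookkeeping.
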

We can establish our result.
\begin{theorem}\label{clawfree}
Deciding if $G$ a connected claw-free graph has a perfect matching-cut is polynomial.
 \end{theorem}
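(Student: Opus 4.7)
The plan is to combine an enumeration over a small ``seed'' for the cut with Sumner's theorem (Theorem \ref{Sumner}) used as a perfect-matching oracle. Since every induced subgraph of a claw-free graph is claw-free, Theorem \ref{Sumner} reduces the question ``does $H$ have a perfect matching?'' to the purely syntactic check ``does every connected component of $H$ have an even number of vertices?'', which will be crucial when completing a candidate matching-cut into a perfect one.

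First I would dispose of the easy cases. If $|V|$ is odd, there is no perfect matching, hence no perfect matching-cut. If $\delta(G)=1$, Fact \ref{leaf} combined with Theorem \ref{Sumner} applied to the components settles the instance. If $G$ contains a bridge, Fact \ref{bridge} reduces the problem again to a perfect matching check on the two sides. Otherwise $\delta(G)\ge 2$ and any perfect matching-cut must contain an edge-cut of size at least two.

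Next, for each edge $uv\in E$ I would try to build a perfect matching-cut $M$ containing $uv$ with $u\in X$ and $v\in Y$. Initialize $X=\{u\}$, $Y=\{v\}$ and run the following propagation, justified by Facts \ref{clique} and \ref{2neighX}: push $N(u)\setminus\{v\}$ into $X$ and $N(v)\setminus\{u\}$ into $Y$, then iteratively, while some unassigned vertex has two neighbors on one side, place it on that side; abort if a vertex is forced into both sides or if the crossing edges ever fail to form a matching. The key structural leverage from claw-freeness is that whenever an unassigned vertex $w$ has a neighbor $x\in X$ and a neighbor $y\in Y$, the edge $xy$ must be present or all remaining neighbors of $w$ must be adjacent to one of $x,y$, since $\{w,x,y,z\}$ cannot be a claw; this tightly constrains how the ``undetermined'' subgraph attached to the current partition can be extended.

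Finally, for each candidate partition $(X,Y)$ surviving the propagation, I would check that $E(X,Y)$ is a matching, and then that $G[X\setminus V(E(X,Y))]$ and $G[Y\setminus V(E(X,Y))]$ both admit a perfect matching; by Theorem \ref{Sumner} this last step is a parity-of-components check. The step I expect to be the main obstacle is showing that after propagation, the remaining ``free'' vertices can be resolved by at most polynomially many additional choices: one needs to argue, using claw-freeness, that the still-unassigned part decomposes into a bounded number of structured pieces, each of which either must go entirely to one side or can be split by a small local case analysis. Once this structural bound is in place, polynomial total running time follows, since each branch requires only local propagation and a parity check on the components of the uncovered subgraphs.
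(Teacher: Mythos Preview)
Your proposal is not a proof but a sketch with the hard step left open. You yourself identify the obstacle: after fixing a seed edge $uv$ and running the natural propagation (neighbors of $u$ into $X$, neighbors of $v$ into $Y$, then Fact~\ref{2neighX} repeatedly), you need to show that the still-unassigned vertices can be resolved with only polynomially many further choices. You do not prove this; you only assert that claw-freeness ``tightly constrains'' the situation and that ``one needs to argue'' a structural decomposition into boundedly many pieces. That is precisely the content of the theorem, and nothing in your write-up establishes it. Concretely, the claw observation you make---that an unassigned $w$ with a neighbor $x\in X$ and a neighbor $y\in Y$ forces $xy\in E$ or adjacencies among the remaining neighbors of $w$---does not by itself bound the number of branches: one can have long chains of such undetermined vertices whose placement cascades, and you give no argument that the resulting case tree has polynomial size. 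Without that lemma the algorithm may be exponential.

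The paper's argument is structurally quite different and sidesteps this branching issue entirely. It never seeds a cut from an edge. Instead it builds maximal \emph{immune clusters} by starting from each triangle and closing under the two rules ``a vertex with two neighbors in the cluster joins it'' and ``if a cluster vertex has two outside neighbors, they are adjacent (by claw-freeness) and both join''. Every vertex not in a cluster then has degree two, and the clusters together with the \emph{links} between them form an auxiliary multigraph $H$. The analysis proceeds on $H$: an even cluster whose removal leaves $G$ connected yields a perfect matching-cut directly via Sumner's theorem; an odd leaf cluster must be merged with its unique neighbor; a pair of adjacent odd clusters whose joint removal leaves $G$ connected can be merged into an even cluster. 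A short argument about contractible edges in (bi)connected graphs shows one of these cases always applies, so the number of clusters strictly decreases until either a perfect matching-cut is found or a single cluster remains (no perfect matching-cut). This gives a clean $O(|E|\cdot|V|^2)$ bound with no branching at all. If you want to salvage your approach, you would need a genuine structural lemma replacing the paper's cluster analysis; as written, the proposal does not contain one.
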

\begin{proof}
By Fact \ref{leaf} we can assume that $G$ has a perfect matching and $\delta(G)\ge 2$.\\

Assume that $G$ has an induced path $P=a-b-c-d$ with $d_G(b)=d_G(c)=2$. Every perfect matching $M$ of $G$ is such that either
$M_P=\{ab,cd\}\subset M$ or $M_P'=\{bc\}\subset M$. Since $M_P$ is a matching-cut when $M_P\subset M$ we have that $M$ is a perfect matching-cut of $G$. Now let $M_P'=\{bc\}\subset M$. If $bc$ is a bridge then $M_P'$ is a matching-cut and $M$ is a perfect matching-cut. Otherwise, $G'=G-bc$ is connected and even, so by \cite{Sumner} $G'$ has a perfect matching $M'$. But we have  $M_P=\{ab,cd\}\subset M'$, so $M'$ is a perfect matching-cut of $G$. Hence, from now on, the graphs we are interested does not contain such a path $P$. \\

If $G$ is $C_3$-free then $G$ is an even cycle and there exists a perfect matching-cut. Now $G$ contains a triangle $C_3=\{a,b,c\}$. We show how to build $C$, an immune cluster, with $\{a,b,c\}\subseteq C\subseteq V$. We initialize $C=\{a,b,c\}$. We do the following two operations, in the following order, as long as possible. \\

{\it Rule 1:}  there exists $v\not\in C$ with two neighbors $s,t \in C$: then $C\leftarrow C\cup\{v\}$. This is mandatory since no matching can disconnect $v$ from $s$ and $t$.

{\it Rule 2:}  there exists  $v\in C$ with two neighbors $s,t\not \in C$: then $C\leftarrow C\cup\{s,t\}$. This is mandatory since: firstly, $v$ has a neighbor $w\in C$ which is not a neighbor of $s,t$ (by Rule 1); secondly,  $G$ is claw-free,  hence $\{v,s,t\}$ induces a triangle in $G$, and a triangle is immune.\\

Let $\mathcal{C}=\{C\}$. We apply the two previous operations for each triangle of $V$, that is not already in a cluster of $\mathcal{C}$, until every triangle belongs to a cluster. Since $G$ is claw-free and $\delta(G)\ge 2$, every vertex that is not in a cluster has a degree two. Now $\mathcal{C}$ is the set of clusters of $G$.

We say that two distinct clusters $C,C'$ are {\it linked} if there exists a path $P=u-\cdots -v$ with $u\in C,\, v\in C'$, such that every vertex $w$ of $P$, $w\ne u,v$, is not contained in a cluster. Alternatively, we say that $P$ is a {\it link} between $C$ and  $C'$. Notice that there might exist several links between two clusters. Since $G$ is claw-free, two links cannot have a same extremity. Hence, when there exists an edge between two links, the two endpoints of this edge are in a same cluster.

Let $C$ be a cluster. Its {\it core}  $K\subseteq C$ consists of all the vertices that are not an extremity of a link, that is, $N[K]\subseteq C$. Note that $K$ is connected. The {\it corona} of $C$ is $Q=C\setminus K$.  Hence  every  $v\in Q$ is the extremity of one link. We say that $C$ is \textit{even} when its core $K$ is such that $\vert K\vert$ is even, otherwise it is {\it odd}. Let $H=({\cal C},{\cal E})$ be the graph where ${\cal C}$ is the current set of clusters, and $C_iC_j\in {\cal E}$ when there exits a link between $C_i$ and $C_j$. \\

We consider the following situations numbered by the order they are  performed in our algorithm: \\

\textit{Case 1}: there exists only one cluster $C\in\cal C$, that is $C=V$.\\

Since $C$ is immune then $G$ has no perfect matching-cut. \\

\textit{Case 2}: there exists an even cluster $C$ such that $G[V\setminus C]$ is connected. \\

By Theorem \ref{Sumner}, $G[K]$ has a perfect matching $M_K$, $K$ being the core of $C$. By \textit{Rule 1} and \textit{Rule 2}, $M_Q=E(Q,N(Q)\setminus C)$ is a perfect matching between each vertex of the corona $Q$ of $C$ and its unique neighbor outside $C$. Note that $M_Q$ is a matching-cut that separates $C$ from the rest of the graph. We show that $G'=G[V\setminus N[C]]$ is also connected. By contradiction we assume there exist two vertices $s,t\in V\setminus N[C]$ that are not connected in $G'$. In $G[V\setminus C]$ there exists $P$ a path between $s$ and $t$. By \textit{Rule 1} we have $P=s-\cdots-u-v-w-\cdots-t$ with $v\in N[C]\setminus C$ and $u,w\in V\setminus N[C]$. But $G$ is claw-free, so $uw\in E$ and $P'=s-\cdots-u-w-\cdots-t$ is a path of $G'$, a contradiction.
 Hence when $G[V\setminus C]$ is connected we have that $G[V\setminus N[C]]$ is also connected. Therefore let $M_G$ be a perfect matching of $G'$. Then $M_G\cup M_Q$ is a perfect matching-cut of $G$. \\

\textit{Case 3}: there exists an odd cluster $C$ such that $G[V\setminus C]$ is connected, and there is $v\in N(C)$ that does not belong to any cluster. \\

By \textit{Rule 1}, $v$ has only one neighbor $w\in Q$. Recall that $v$ is the unique neighbor of $w$ outside $C$ and $d_G(v)=2$.  We replace $C$ by $C'=C\cup \{v\}$ in $\cal C$. Hence $K'=K\cup \{w\}$ is the even core of $C'$, so $C'$ is an even cluster.
Note that $G[V\setminus C']$ is connected and non empty. Therefore we are as  in the \textit{Case 2} and $G$ has a perfect matching-cut.\\

\textit{Case 4}: $H$ has a leaf $C$. \\

Let $C'$ be the unique neighbor of $C$ in $H$. Since the core $K$ of $C$ is odd, $G[K]$ has no perfect matching. It follows that for every perfect matching $M$ of $G$, at least one link between $C$ and $C'$ is not in $M$. So $C$ and $C'$ cannot be disconnected by a perfect matching-cut. Hence $C$ and $C'$ are merged into a new cluster $\bar C=C\cup C'$. Thereby, the total number of clusters in $\cal C$ decreases by one unit, and we return to \textit{Case 1}. \\

\textit{Case 5}: there is a pair of odd clusters $C,C'$ such that $CC'\in \cal E$ and $G[V\setminus (C\cup C')]$ is connected. \\

Let $\bar C=C\cup C'$. Recall that after \textit{Case 3} is performed the links between $C$ and $C'$ are edges. Note that $E(C,C')$ is a matching. Hence $\bar C$ is even since its core is $\bar K=K\cup K'\cup W$, where $W$ is the set of vertices with an endpoint in $E(C,C')$. Since we are after \textit{Case 4}, $N(\bar C)\setminus \bar C\neq \emptyset$. Then we replace $C$ and $C'$ by $\bar C$ in $\cal C$ and we are as  in the \textit{Case 2} and $G$ has a perfect matching-cut. \\

In order to show that all the situations are covered by \textit{Case 1} to \textit{Case 5} we need the following two facts. First, by \cite{Bondy} (p.211, 9.1.6) we have that for every $2$-connected graph $G'=(V',E')$, there exists a {\it contractible} edge, that is an edge $e$ such that $G'/e$ is still a $2$-connected graph. Second, we show that for every $1$-connected graph $G'=(V',E')$ with $\delta(G')\ge 2$, there exists $e$ an edge of a terminal component  such that $G'/e-e$ is still a connected graph.

Let $T$ be a terminal component with $a$ its vertex-cut. Since $G'$ has no leaf then $T$ has a cycle containing $a$. Let $D=a-\cdots-u-v-\cdots-a$ be a longest cycle and let $e=uv$ be an edge of $D$ with $u,v\ne a$. For contradiction we assume that $G''=G'/e-e$ is not connected. Then $G''$ has a connected component $T'$ that does not contain $a$ nor the other vertices of $D$. Let $w$ be a vertex of $T'$. Since $T$ is biconnected then in $G'$ there exists a cycle $D'=a-\cdots-u-\cdots-w-\cdots-v-\cdots-a$, contradicting the maximality of $D$.\\

We show that the algorithm terminates.
Recall that after \textit{Case 4}, the current graph $H$ is connected and $\delta(H)\geq 2$. Also, after \textit{Case 5} all the clusters are odd. By the first fact above when $H$ is $2$-connected we can always perform \textit{Case 2} or \textit{Case 3} or \textit{Case 5}. Otherwise $H$ is $1$-connected and by the second fact again we can perform \textit{Case 2} or \textit{Case 3} or \textit{Case 5}.

When \textit{Case 1} is performed the algorithm stops and $G$ has no perfect matching-cut. When \textit{Case 2} or \textit{Case 3} is performed the algorithm stops and $G$ has a perfect matching-cut. When \textit{Case 4} or \textit{Case 5} is performed the number of cluster in $\cal C$ decreases by one unit and we go back to \textit{Case 1}.\\

It remains to show that the algorithm is polynomial. Recall that Edmonds's Algorithm computes a perfect matching  in $O(\vert E\vert.\vert V\vert^2)$ (better algorithms are known). Checking if there exists an induced path $P=a-b-c-d$, with $d_G(b)=d_G(c)=2$ can be done in $O(\vert V\vert^2)$.

At each step all the clusters are vertex-disjoint so there are at most $O(\vert V\vert)$ clusters. Each cluster contains at least a triangle, so initializing the clusters take at most $O(\vert V\vert^3)$. Applying {\it Rule 1} and {\it Rule 2} can be done in $O(\vert V\vert^3)$.  Thus the initialization takes $O(\vert V\vert^3)$.

Searching for two adjacent odd clusters in a same terminal biconnected component takes $O(\vert E\vert)$. \textit{Case 5} is performed at most $\vert V\vert$ times. Thus all the applications of  \textit{Case 1} and \textit{Case 5} can be done in $O(\vert V\vert^3)$. \textit{Case 2} or \textit{Case 3} is performed at most one time. These two cases need to determine a perfect matching.
Hence the  overall complexity of our algorithm is $O(\vert E\vert.\vert V\vert^2)$.
\end{proof}

\section{Graphs with fixed Bounded Tree-Width}\label{width}

It is shown in \cite{Bonsma} that the graph property of having a Matching-Cut can be expressed in MSOL. All graph properties definable in MSOL can be decided in linear time for the classes of graphs with bounded tree-width, when a tree-decomposition is given. Hence it can be decided in polynomial time if a graph of bounded tree-width (given a tree-decomposition) has a Matching-Cut. We refer to \cite{Courcelle} for definitions and an overview of the logical language MSOL.

\begin{theorem}
   Let $G=(V,E)$ be a graph of bounded tree-width. Deciding if $G$ has a Perfect Matching-Cut can be done in polynomial time.
\end{theorem}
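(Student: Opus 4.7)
The plan is to mimic the MSOL strategy already used in \cite{Bonsma} for the Matching-Cut problem: express the Perfect Matching-Cut property in monadic second-order logic (with edge-set quantification, i.e.\ $\mathrm{MSOL}_2$) and then invoke the Courcelle-type meta-theorem cited in the excerpt, which decides any $\mathrm{MSOL}_2$-definable property in linear time on graphs of bounded tree-width once a tree-decomposition is supplied (and a tree-decomposition of optimal width for fixed tree-width can itself be found in linear time by Bodlaender's algorithm).

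Concretely, I would introduce a single existential quantifier over an edge set $M \subseteq E$ and conjoin two conditions on $M$. The first condition says that $M$ is a perfect matching: $\forall v \in V\; \exists e \in M\; (v \in e)$ and $\forall e,f \in M\; \forall v \in V\; \bigl((v\in e \wedge v\in f) \rightarrow e=f\bigr)$. The second condition says that $G-M$ is disconnected, exactly as in the Matching-Cut formulation: $\exists X \subseteq V$ such that $X \neq \emptyset$, $X \neq V$, and $\forall e \in E\setminus M\; \forall u,v \in V\; \bigl((u\in e \wedge v\in e) \rightarrow (u\in X \leftrightarrow v\in X)\bigr)$. All atomic predicates used (vertex--edge incidence, equality, membership) are primitives of $\mathrm{MSOL}_2$, and all quantifiers are either first-order or range over subsets of $V$ or $E$, so the combined formula is a legal $\mathrm{MSOL}_2$ sentence.

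The main obstacle, and it is a very small one, is simply to notice that the only change from Bonsma's encoding of Matching-Cut is the strengthening ``$M$ is a matching-cut'' to ``$M$ is a perfect matching and contains a cut''. Adding the perfectness requirement is local: it only constrains how each vertex appears as an endpoint of exactly one edge of $M$, which is straightforwardly MSOL-expressible. Once this formula $\varphi_{\mathrm{PMC}}$ is written down, Courcelle's theorem gives, for each fixed $k$, a linear-time algorithm that evaluates $\varphi_{\mathrm{PMC}}$ on any graph of tree-width at most $k$ together with its tree-decomposition; composed with Bodlaender's algorithm to produce the decomposition, this yields the desired polynomial-time (in fact linear-time for each fixed tree-width) decision procedure, establishing the theorem.
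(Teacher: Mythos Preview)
Your proposal is correct and follows essentially the same strategy as the paper: express the Perfect Matching-Cut property as an $\mathrm{MSOL}_2$ sentence and invoke Courcelle's theorem for bounded tree-width. The only differences are cosmetic---your cut clause says ``every edge of $E\setminus M$ stays within $X$ or within its complement'' whereas the paper uses the ``no vertex of $V_i$ has two neighbours in $V_j$'' formulation, and you additionally cite Bodlaender's algorithm for producing the tree-decomposition.
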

\begin{proof}
  We have adapted the MSOL formulation of the Matching-Cut property shown in \cite{Bonsma} so that it corresponds to the perfect matching-cut. The property of having a perfect matching-cut in the graph $G$ can be expressed in MSOL as follows.

   \begin{gather*}
      \exists V_1\subseteq V: \exists V_2\subseteq V:\exists M\subseteq E: (V_1\cap V_2=\emptyset)\land (V_1\cup V_2=V)\land \neg (V_1=\emptyset)\land \neg (V_2=\emptyset)\land \\
      \neg (\exists u\in V_1:\exists v\in V_2:\exists w\in V_2:\neg (v=w)\land (uv\in E)\land (uw\in E))\land \\
      \neg (\exists u\in V_2:\exists v\in V_1:\exists w\in V_1:\neg (v=w)\land (uv\in E)\land (uw\in E))\land \\
      (\forall u\in V:\exists v\in V:\forall w\in V: \neg (v=w)\land (uv\in M)\land \neg (uw\in M)).
   \end{gather*}

   Note that the first three lines correspond to the matching-cut property, that is, no vertex of $V_i$ has more than one neighbor in $V_j$, $i\neq j$. The last line is added so that each vertex of $V$ has exactly one endpoint in the matching $M$, whether it is in $V_1$ or $V_2$. Hence a vertex $v\in V_i$ with no neighbor in $V_j$, $i\neq j$, must have exactly one neighbor $u\in V_i$ such that $uv\in M$.
\end{proof}

It is also shown in \cite{Bonsma} that the graph property of having a Matching-Cut can be expressed in MSOL without \textit{quantification over edge sets}. Any graph property expressible as MSOL without \textit{quantification over edge sets} can be decided in linear time for classes of graphs with bounded clique-width (such as co-graphs), when a corresponding decomposition is given. We refer to \cite{Courcelle} for additional details.
Unfortunately, it has been proved in \cite{Courcelle},  Proposition 5.13 page 338, that the Perfect Matching cannot be expressed in MSOL without quantification over edge sets. Hence we cannot conclude that it can be decided in polynomial time if a graph of bounded clique-width (given a corresponding decomposition) has a Perfect Matching-Cut using the associated MSOL definition.

\section{Conclusion and open problems}\label{conc}
With a same flavour as for the Matching-Cut problem we proved complexity results for the Perfect Matching-Cut problem under several  parameter restrictions or for graph subclasses.

\begin{itemize}
\item Regular graphs: PMC is NP-complete for $5$-regular graphs even for bipartite graphs;
\item Diameter: For $d$ a fixed interger and $G$ a graph with $diam(G)=d$, then PMC is polynomial for $d\le 2$ and NP-complete for $d\ge 3$; when $G$ is bipartite PMC is polynomial for $d\le 3$ and NP-complete for $d\ge 4$;
\item Planar graphs: PMC is NP-complete for graphs with  $\delta(G)=3,\Delta(G)=4$, and for graphs with girth $g(G)=5$, but is polynomial for cubic planar graphs;
\item PMC is polynomial for claw-free graphs and NP-complete for $K_{1,4}$-free planar graphs;
\item PMC is polynomial for $P_5$-free graphs;
\item Bounded treewidth: PMC is polynomial.
\end{itemize}
We give a list of open problems  that seems relevant after the results we proved above.
\begin{itemize}
\item Cubic (nonplanar) graphs, subcubic graphs, $4$-regular planar graphs;
\item Bipartite planar graphs;
\item Planar graphs with girth $g(G)=d$ for fixed $d\ge 6$;
\item $P_k$-free graphs for $k\ge 6$.
\end{itemize}

 \begin{ack}
The authors  express their gratitude to Fran\c cois Delbot and St\'ephane Rovedakis for helpful discussions.
\end{ack}

\end{document}